\documentclass[12pt]{amsart}
\usepackage{amsfonts,amssymb,amscd,xy}
\usepackage[leqno]{amsmath}
\usepackage{mathrsfs}
\usepackage{amssymb}
\usepackage{amsmath}
\usepackage{amsxtra}
\usepackage{amsthm}

\usepackage[usernames,dvipsnames]{xcolor}
\usepackage{tabularx}
\usepackage{epsfig,amssymb}
\usepackage{bm} 
\usepackage{verbatim} 

\usepackage{ulem}

\usepackage{hyperref}
\definecolor{labelkey}{rgb}{1,0,0}


\usepackage{verbatim} 

\textwidth=15 cm
\oddsidemargin=-1cm
\evensidemargin=-1cm
\makeatletter

\DeclareMathAlphabet{\mathcalligra}{T1}{calligra}{m}{n}

\numberwithin{equation}{section}

\def\der{{\rm{der}}}
\def\tor{{\lle\rm{tor}}}

\newcommand{\sh}{\kern -.4em\phantom{a}^{\mathbf{\sim}}}

\newcommand{\g}{\textsl{g}}

\newcommand{\kbar}{\overline{k}}
\def\be{\kern -.1em}
\def\le{\kern 0.03em}
\def\lle{\kern 0.04em}
\def\lbe{\kern -.025em}

\newcommand{\Z}{{\mathbb Z}}
\newcommand{\Q}{{\mathbb Q}}

\newcommand{\G}{{\mathbb G}}

\newcommand{\spec}{\mathrm{ Spec}\,}

\newcommand{\Hom}{{\mathrm{Hom}}}
\newcommand{\Ext}{{\mathrm{Ext}}}

\newcommand{\Knr}{K^{\le\rm{nr}}}

\def\e{\kern 0.08em}

\newcommand{\s}{\mathscr }

\newcommand{\mr}{\mathrm }

\newcommand{\sep}{\le{\rm sep}}

\newtheorem{lemma}{Lemma}[section]
\newtheorem{theorem}[lemma]{Theorem}
\newtheorem{proposition-definition}[lemma]{Proposition-Definition}
\newtheorem{corollary}[lemma]{Corollary}
\newtheorem{proposition}[lemma]{Proposition}
\theoremstyle{definition}

\theoremstyle{remark}
\newtheorem{remark}[lemma]{Remark}
\newtheorem{remarks}[lemma]{Remarks}

\definecolor{labelkey}{rgb}{1,0,0}

\begin{document}

\input xy     
\xyoption{all}

\title[The cohomology of tori over local fields]{On the cohomology of tori over local fields with perfect residue field}

\date{\today}

\subjclass[2010]{Primary 20G25; Secondary 11S25, 12G05}

\author{Alessandra Bertapelle}
\address{Dipartimento di Matematica, Universit\`a di Padova, via Trieste 63, I-35121 Padova}
\email{alessandra.bertapelle@unipd.it}

\author{Cristian D. Gonz\'alez-Avil\'es}
\address{Departamento de Matem\'aticas, Universidad de La Serena. Cisternas 1200, La Serena 1700000, Chile}
\email{cgonzalez@userena.cl}

\thanks{C.\,G-A. was partially supported by Fondecyt grant 1120003.}

\keywords{Local fields with perfect residue field, Algebraic torus, cohomology, N\'eron model, group of components}

\topmargin -1cm

\smallskip

\begin{abstract}  If $T$ is an algebraic torus defined over a discretely valued field $K$ with perfect residue field $k$, we relate the $K$-coho\-mo\-logy of $T$ to the $k$-cohomology of certain objects associated to $T$. When $k$ has cohomological dimension $\leq 1$, our results have a particularly simple form and yield, more generally, isomorphisms between Borovoi's abelian $K$-cohomology of a reductive group $G$ over $K$ and the $k$-cohomology of a certain quotient of the algebraic fundamental group of $G$.
\end{abstract}

\maketitle

\section{Introduction}
Let $A$ be a complete discrete valuation ring with field of fractions $K$ and {\it perfect} residue field $k$. Let $\kbar$ and $K^{\sep}$ be fixed separable algebraic closures of $k$ and $K$, respectively, and let $\g$ and $\mathcal G$ denote the corresponding absolute Galois groups. Further, let $\Knr$ denote the maximal unramified extension of $K$ in $K^{\sep}$ and let $I=\text{Gal}\lbe\big(K^{\sep}\be/\Knr\le\big)$ be the inertia subgroup of $\mathcal G$. Then there exists a canonical isomorphism of groups $\mathcal G/I=\g$. If $M$ is a $\mathcal G$-module, $M_{\lbe I}$ will denote the $\g$-module of $I$-coinvariants of $M$.
Now let $T$ be a $K$-torus and let $X^{\lbe\ast}\be(T\e)$ (respectively, $X_{\lbe\ast}\lbe(T\e)$) denote the $\mathcal G$-module of characters (respectively, cocharacters) of $T$. Let $\s T$ denote the N\'eron model of $T$ over $S:=\spec A$ and let $i\colon\spec k \to S$ be the canonical closed immersion. The {\it group of components of $\s T$}, i.e., the (continuous) $\g$-module $\phi(T\e)$ which corresponds to the \'etale $k$-sheaf $i^{*}\be\big(\be\s T/\s T^{\e 0}\le\big)$, was described by Xarles in \cite{xa} in terms of $X^{\lbe\ast}\be(T\e)$. The description given in \cite{xa} is simple when $\phi(T\e)$ is either torsion or torsion-free, but this is not the case in general. When $k$ is finite, a much simpler description of $\phi(T\e)$ was obtained by Bitan in \cite[(3.1)]{bit}, who showed the existence of an isomorphism of $\g$-modules
$\phi(T\e)\simeq X_{\lbe\ast}\lbe(T\e)_{\lbe I}$ for such $k$.
We generalize Bitan's beautiful formula to the case of any perfect field $k$. That is, we prove
\begin{theorem}\label{phit} There exists a canonical isomorphism of $\g$-modules
\[
\phi(T\e)\overset{\!\sim}{\to}X_{\lbe\ast}\lbe(T\e)_{\lbe I}\e.
\]
\end{theorem}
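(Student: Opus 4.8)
The plan is to reduce first to the case of a separably closed residue field, and then to identify the two sides by an explicit valuation-type homomorphism, reading off Xarles's description where torsion phenomena intervene. Let $\hat A^{\,\mathrm{nr}}$ denote the completion of the strict henselization of $A$; it is again a complete discrete valuation ring, its residue field is $\kbar$, its fraction field $\hat K^{\mathrm{nr}}$ has absolute Galois group canonically $I$, and $X_{\ast}(T_{\hat K^{\mathrm{nr}}})=X_{\ast}(T\e)$ as $I$-modules. The formation of the N\'eron lft model is compatible with the base change $A\to\hat A^{\,\mathrm{nr}}$ (by the standard \'etale-localization and completion properties of N\'eron models), so the \'etale $k$-sheaf $i^{\ast}\big(\s T/\s T^{0}\big)$ pulls back to the corresponding $\kbar$-sheaf attached to $T_{\hat K^{\mathrm{nr}}}$; likewise $X_{\ast}(T\e)_{I}$ is obtained from $X_{\ast}(T_{\hat K^{\mathrm{nr}}})_{I}$ by restriction of the $\g$-action. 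Since both constructions are functorial in $T$ and compatible with this base change, it suffices to produce a homomorphism $\phi(T\e)\to X_{\ast}(T\e)_{I}$ that is natural in $T$ -- hence automatically $\g$-equivariant -- and to check that it is bijective when $k$ is separably closed.

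For the construction, assume $k=\kbar$ and write $\s T$ for the N\'eron lft model of $T$ over $\hat A^{\,\mathrm{nr}}$. By the N\'eron mapping property $\s T(\hat A^{\,\mathrm{nr}})=T(\hat K^{\mathrm{nr}})$, and $\s T^{0}$ is an open subgroup scheme, so $\phi(T\e)=T(\hat K^{\mathrm{nr}})/\s T^{0}(\hat A^{\,\mathrm{nr}})$. I would define a homomorphism $w_{T}\colon T(\hat K^{\mathrm{nr}})\to X_{\ast}(T\e)_{I}$ as the unique functorial-in-$T$ homomorphism whose value on an induced torus $R_{L/\hat K^{\mathrm{nr}}}\G_{m}$ (with $L/\hat K^{\mathrm{nr}}$ finite, necessarily totally ramified) is the valuation $L^{\times}\!\to\Z=X_{\ast}(R_{L/\hat K^{\mathrm{nr}}}\G_{m})_{I}$; on the character-free part $w_{T}$ is pinned down by $\langle\chi,w_{T}(x)\rangle=v(\chi(x))$ for $\chi\in X^{\ast}(T\e)^{I}$, while its torsion part is supplied by Xarles's formula. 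The two points to prove are then: (i) $\s T^{0}(\hat A^{\,\mathrm{nr}})\subseteq\krn w_{T}$, so that $w_{T}$ descends to $\overline{w}_{T}\colon\phi(T\e)\to X_{\ast}(T\e)_{I}$; and (ii) $\overline{w}_{T}$ is bijective.

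Both (i) and (ii) I would obtain by d\'evissage. First one checks the statement by hand for quasi-trivial tori $P=\prod_{i}R_{L_{i}/\hat K^{\mathrm{nr}}}\G_{m}$: here the N\'eron lft model is the explicit product of copies of $R_{\cO_{L_{i}}/\hat A^{\,\mathrm{nr}}}\G_{m}$ extended by the constant sheaf $\Z$, so $\s P^{0}(\hat A^{\,\mathrm{nr}})=\prod_{i}\cO_{L_{i}}^{\times}=\krn w_{P}$ and $\phi(P\e)=\bigoplus_{i}\Z=X_{\ast}(P\e)_{I}$. Next one chooses a short exact sequence $1\to T'\to P\to T\to 1$ with $P$ quasi-trivial and compares connected N\'eron models and component groups along it. The connected N\'eron model is left exact on tori, and right exactness up to a controlled error is available because $\kbar$ has cohomological dimension $0$, so $H^{1}$ of the smooth connected special fibres vanishes (Lang's theorem); this kills the potential obstruction and lets one propagate (i) and (ii) from $P$ to $T$. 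For (i) one also uses that $\s T^{0}(\hat A^{\,\mathrm{nr}})$ is exactly the maximal bounded subgroup of $T(\hat K^{\mathrm{nr}})$, since $\s T^{0}$ is of finite type with connected affine special fibre.

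The main obstacle is precisely this last d\'evissage step: the connected N\'eron model functor is not exact on the right, so matching $\s T^{0}(\hat A^{\,\mathrm{nr}})$ and $\phi(T\e)$ across the sequence $1\to T'\to P\to T\to 1$ -- and in particular getting the torsion of $X_{\ast}(T\e)_{I}$ to correspond to the torsion of $\phi(T\e)$ rather than merely identifying the free quotients -- requires the careful bookkeeping that is implicit in Xarles's description of $\phi(T\e)$ in terms of $X^{\ast}(T\e)$. Once the bijection $\overline{w}_{T}$ is established over separably closed $k$ and seen to be natural in $T$, the reduction of the first paragraph upgrades it to the asserted canonical isomorphism of $\g$-modules over an arbitrary perfect residue field.
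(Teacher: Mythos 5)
Your overall strategy---a valuation-type map onto the ``free part'' plus a d\'evissage through quasi-trivial tori---is close in spirit to the paper's, but as written it has a genuine gap exactly at the point you flag as the main obstacle, and the d\'evissage you propose does not close. First, the map $w_{T}$ is never actually constructed: the recipe $\langle\chi,w_{T}(x)\rangle=v(\chi(x))$ for $\chi\in X^{\lbe\ast}\be(T\e)^{I}$ only defines the composite of $w_{T}$ with the projection $q_{\le T}\colon X_{\lbe\ast}\lbe(T\e)_{\lbe I}\to\big(X^{\lbe\ast}\be(T\e)^{I}\big)^{\!\vee}$, i.e.\ it kills the torsion subgroup $H^{1}(I,X^{\lbe\ast}\be(T\e))^{D}$ of $X_{\lbe\ast}\lbe(T\e)_{\lbe I}$; saying that ``the torsion part is supplied by Xarles's formula'' is not a definition, and matching $\phi(T\e)_{\rm tors}$ with that torsion subgroup is precisely the content of the theorem beyond the easy identification of free quotients (which is the paper's Corollary \ref{iso2}). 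Worse, your proposed proof of (i) relies on $\s T^{\e 0}\big(\hat{A}^{\rm nr}\big)$ being the maximal bounded subgroup of $T\big(\Kcnr\big)$; this is false whenever $\phi(T\e)_{\rm tors}\neq 0$ (the maximal bounded subgroup is the group of points of the finite-type N\'eron model, i.e.\ the preimage of $\phi(T\e)_{\rm tors}$), and a $w_{T}$ vanishing on all bounded elements would force $\overline{w}_{T}$ to kill $\phi(T\e)_{\rm tors}$, contradicting bijectivity.

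Second, the d\'evissage along $1\to T'\to P\to T\to 1$ with $P$ quasi-trivial does not terminate: the kernel $T'$ (e.g.\ a norm-one torus) is again a general torus whose component group has torsion, so computing $\phi(T\e)$ as a quotient of $\phi(P\e)$ requires already knowing the image of $\phi(T'\e)$, i.e.\ the theorem for $T'$. The paper circumvents both problems at once by resolving $T$ as a quotient, $0\to P\to Q\to T\to 0$ with $P=R^{(1)}_{L/K}(T_{L})^{(m)}$ and $Q=R_{L/K}(T_{L})/R^{(1)}_{L/K}(T_{L})_{(u)}$ (\eqref{cres}), arranged so that $H^{1}(I,X^{\lbe\ast}\be(P\e))=H^{1}(I,X^{\lbe\ast}\be(Q\e))=0$: for such tori both $\phi$ and $X_{\lbe\ast}\lbe(-)_{\lbe I}$ are torsion-free and the isomorphism is the easy case (Lemma \ref{tors-free}), and both functors are shown to be exact on this particular resolution (Lemmas \ref{cres1} and \ref{xseq}), so $\beta_{\le T}$ is simply the induced map on cokernels. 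If you want to rescue your approach you need an analogous resolution in which the subtorus, not just the ambient one, has $H^{1}(I,X^{\lbe\ast})=0$. (Minor points: the reduction to $k=\kbar$ via $\Rnr$ is legitimate and the $\g$-equivariance-by-naturality argument is fine, though the paper avoids it by working with \'etale sheaves over $S$; and over a separably closed field the vanishing of $H^{1}$ of smooth group schemes is automatic, not an instance of Lang's theorem.)
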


We note that Bitan  \cite[(3.1)]{bit} obtained his formula by combining work of Kottwitz
\cite[ \S7.2]{kott} and of Haines and Rapoport \cite[Appendix]{pr}.
Since \cite[Appendix]{pr} depends on Bruhat-Tits, so does the proof of \cite[(3.1)]{bit}.
Although Bitan's method can be extended to yield a proof of his formula for any perfect field $k$, in this paper we have chosen to generalize his formula by means of an {\it explicit and functorial construction} of the  isomorphism of Theorem \ref{phit} independently of Bruhat-Tits theory.

The above theorem has a number of (immediate) consequences which shed new light on the present subject. For example, the theorem implies that the functor $\phi(-)$ transforms short exact sequences of $K$-tori into 6-term exact sequences of $\g$-modules. See Proposition \ref{6-term} for the precise statement.

In Section \ref{t-coh} we use Theorem \ref{phit} to relate the $K$-cohomology of $T$ to the $k$-cohomology of $X_{\lbe\ast}\lbe(T\e)_{\lbe I}$. When $k$ has cohomological dimension $\leq 1$, our results have the following simple form.

\begin{theorem}[=Theorem \ref{kt2}]  Assume that $k$ has cohomological dimension $\leq 1$. Then, for $r=1$ and $2$, there exist canonical isomorphisms of abelian groups
\[
 H^{\le r}(K,T\e)\simeq  H^{\le r}\be\big(k,X_{\lbe\ast}\lbe(T\e)_{\lbe I}\lbe\big).
\]
If $r\geq 3$, the groups $ H^{\le r}(K,T\e)$ vanish.
\end{theorem}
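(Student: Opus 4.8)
The plan is to compute $H^{\le r}(K,T\e)$ through the N\'eron model and then quote Theorem~\ref{phit}. Since $A$ is complete, hence Henselian, the closed immersion $i\colon\spec k\to S$ induces an equivalence between the small \'etale topoi of $k$ and of $S$, so that $H^{\le p}\be(S_{\et},\s F\e)=H^{\le p}\be(k,i^{*}\s F\e)$ for every \'etale sheaf $\s F$ on $S$. The N\'eron mapping property shows that $\s T$ represents $j_{*}T$ on $S_{\et}$, where $j\colon\spec K\hookrightarrow S$ is the inclusion of the generic point; hence there is a Leray spectral sequence $H^{\le p}\be\big(S_{\et},R^{q}j_{*}T\big)\Rightarrow H^{\le p+q}(K,T\e)$. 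For $q\geq 1$ the sheaf $R^{q}j_{*}T$ is supported on the closed point, where its stalk is $H^{q}(\Knr\be,T\e)$; but $\Knr$ has cohomological dimension $\leq 1$ (its residue field $\kbar$ is algebraically closed), and the higher Galois cohomology of a torus over a field of cohomological dimension $\leq 1$ vanishes (use a quasi-trivial resolution of $T$, Hilbert~90, the triviality of the Brauer group, and the divisibility of $T(K^{\sep})$ in degrees $\geq 3$). Thus $R^{q}j_{*}T=0$ for $q\geq 1$, the spectral sequence collapses, and
\[
H^{\le r}(K,T\e)\;\simeq\;H^{\le r}\be(S_{\et},\s T\e)\;\simeq\;H^{\le r}\be(k,i^{*}\s T\e)
\]
for every $r$, where $i^{*}\s T$ is the $\g$-module attached to the special fibre of the N\'eron model.

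Next I would apply $i^{*}$ to $0\to\s T^{\e 0}\to\s T\to\s T/\s T^{\e 0}\to 0$, obtaining an exact sequence of $\g$-modules $0\to i^{*}\s T^{\e 0}\to i^{*}\s T\to\phi(T\e)\to 0$. Here $i^{*}\s T^{\e 0}$ is the $\g$-module of $\kbar$-points of a connected commutative linear $k$-group, namely an extension of a $k$-torus by a smooth connected unipotent group; since $k$ is perfect of cohomological dimension $\leq 1$ one has $H^{\le r}\be(k,i^{*}\s T^{\e 0})=0$ for all $r\geq 1$ (the torus case as above, together with $H^{\le r}\be(k,\G_{a})=0$ for all $r\geq 1$). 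The long exact cohomology sequence then yields canonical isomorphisms $H^{\le r}\be(k,i^{*}\s T\e)\simeq H^{\le r}\be(k,\phi(T\e))$ for every $r\geq 1$, and Theorem~\ref{phit} identifies the latter with $H^{\le r}\be\big(k,X_{\lbe\ast}\lbe(T\e)_{\lbe I}\lbe\big)$. Combining with the previous paragraph gives the asserted canonical isomorphism $H^{\le r}(K,T\e)\simeq H^{\le r}\be\big(k,X_{\lbe\ast}\lbe(T\e)_{\lbe I}\lbe\big)$ for all $r\geq 1$, in particular for $r=1$ and $r=2$. For $r\geq 3$, since $\mathrm{cd}(K)\leq\mathrm{cd}(k)+1\leq 2$ and $T(K^{\sep})$ is divisible, one obtains $H^{\le r}(K,T\e)\simeq H^{\le r}\be(K,T(K^{\sep})_{\tor})=0$ (and, consistently, $H^{\le r}\be\big(k,X_{\lbe\ast}\lbe(T\e)_{\lbe I}\lbe\big)=0$ for $r\geq 3$, applying $\mathrm{cd}(k)\leq 1$ together with the divisibility argument to the maximal free quotient of $X_{\lbe\ast}\lbe(T\e)_{\lbe I}$).

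The main obstacle is the first displayed isomorphism, i.e. the identification of $H^{\le r}(K,T\e)$ with the cohomology over $S$ of the N\'eron model $\s T$: this rests on the vanishing $H^{q}(\Knr\be,T\e)=0$ for $q\geq 1$, which in turn uses the structure theory of tori over a field of cohomological dimension $\leq 1$, together with the standard but somewhat delicate fact that over the complete — hence Henselian — local ring $A$ the small \'etale cohomology of $S$ is computed on the closed point. Once these inputs and Theorem~\ref{phit} are in hand, the remaining steps — the d\'evissage of the special fibre along $\s T^{\e 0}$ and the vanishing of the higher cohomology of connected commutative $k$-groups — are routine.
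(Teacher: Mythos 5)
Your proof follows essentially the same route as the paper: identify $H^{\le r}(K,T\e)$ with the cohomology of the N\'eron model via $R^{\lbe q}j_{*}T=0$ and the Leray spectral sequence, perform a d\'evissage along $0\to\s T^{\e 0}\to\s T\to i_{*}\phi(T\e)\to 0$, kill the cohomology of the identity component (a torus times a smooth unipotent group over a perfect field of cohomological dimension $\leq 1$), and apply Theorem \ref{phit} --- which is exactly what the paper does via the sequences \eqref{long-coh} and \eqref{long-coh1} together with Lemmas \ref{tau-coh} and \ref{coh-triv}. The one inaccuracy is your opening claim that $i$ induces an equivalence of the small \'etale topoi of $k$ and $S$, so that $H^{\le p}(S,\s F\e)=H^{\le p}(k,i^{*}\s F\e)$ for \emph{every} \'etale sheaf: this is false as stated (e.g.\ $\Gamma(S,j_{!}\le\G_{m,K})=1+\mm$ while $i^{*}\lbe j_{!}\le\G_{m,K}=0$); what is true, and is all you actually use, is Grothendieck's comparison $H^{\le p}(S,\s G\e)\cong H^{\le p}(k,\s G_{\lbe s})$ for smooth group schemes over a henselian local ring, which is precisely the result the paper cites.
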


In Section \ref{ab-coh} we generalize the above theorem from $K$-tori to arbitrary connected reductive algebraic $K$-groups $G$. More precisely, let $\pi_{1}(G\le)$ be the algebraic fundamental group of $G$. Then the following holds.

\begin{theorem} [=Theorem \ref{abt}] Assume that $k$ has cohomological dimension $\leq 1$ and let $G$ be a connected reductive algebraic group over $K$. Then, for $r=1$ and $2$, there exist isomorphisms of abelian groups
\[
H^{\le r}_{\rm{ab}}(K_{\rm{fl}},G\e)\simeq  H^{\le r}\be\big(k,\pi_{1}(G\le)_{\lbe I}\lbe\big).
\]
If $r\geq 3$, the groups $ H^{\le r}_{\rm{ab}}(K_{\rm{fl}},G\e)$ vanish.
\end{theorem}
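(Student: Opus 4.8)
The plan is to reduce the statement about abelian cohomology of $G$ to the previously established case of tori, using the functoriality of Borovoi's abelian cohomology and a convenient presentation of the algebraic fundamental group. First I would recall that $\pi_{1}(G)$ is, by definition, the cokernel of the map $X_{\ast}(T_{\mathrm{sc}})\to X_{\ast}(T)$, where $T\subset G$ is a maximal $K$-torus and $T_{\mathrm{sc}}$ its preimage in the simply connected cover $G_{\mathrm{sc}}$ of the derived group; equivalently, one may compute $H^{r}_{\mathrm{ab}}(K_{\mathrm{fl}},G)$ as the hypercohomology $\mathbb H^{r}(K_{\mathrm{fl}},T_{\mathrm{sc}}\to T)$ of the crossed-module complex placed in degrees $-1,0$. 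The key point is then that there is a distinguished triangle (equivalently a long exact sequence) relating this hypercohomology to the ordinary cohomology of $T_{\mathrm{sc}}$ and of $T$, and likewise on the $k$-side a long exact sequence relating $H^{r}(k,\pi_{1}(G)_{I})$ to $H^{r}(k, X_{\ast}(T)_{I})$ and to $H^{r}(k, X_{\ast}(T_{\mathrm{sc}})_{I})$, since $(-)_{I}$ is right exact and $X_{\ast}(T_{\mathrm{sc}})$ is torsion-free so that the relevant $\mathrm{Tor}$ terms can be controlled.

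Next I would invoke Theorem \ref{kt2}: for the $K$-tori $T$ and $T_{\mathrm{sc}}$ one has canonical isomorphisms $H^{r}(K,T)\simeq H^{r}(k,X_{\ast}(T)_{I})$ and $H^{r}(K,T_{\mathrm{sc}})\simeq H^{r}(k,X_{\ast}(T_{\mathrm{sc}})_{I})$ for $r=1,2$, with vanishing for $r\geq 3$; moreover $H^{0}$ of a torus and $H^{0}$ of its cocharacter module over a field of cohomological dimension $\leq 1$ can be matched as well, at least after the relevant norm/trace identifications, so the map of long exact sequences is defined in all the degrees that matter. The strategy is to assemble the two long exact sequences into a commutative ladder
\[
\begin{CD}
\cdots @>>> H^{r}(K,T_{\mathrm{sc}}) @>>> H^{r}(K,T) @>>> H^{r}_{\mathrm{ab}}(K_{\mathrm{fl}},G) @>>> H^{r+1}(K,T_{\mathrm{sc}}) @>>> \cdots\\
@. @VVV @VVV @VVV @VVV @.\\
\cdots @>>> H^{r}(k,X_{\ast}(T_{\mathrm{sc}})_{I}) @>>> H^{r}(k,X_{\ast}(T)_{I}) @>>> H^{r}(k,\pi_{1}(G)_{I}) @>>> H^{r+1}(k,X_{\ast}(T_{\mathrm{sc}})_{I}) @>>> \cdots
\end{CD}
\]
in which the two outer vertical maps in each square are isomorphisms by Theorem \ref{kt2}; the five lemma then forces the middle map $H^{r}_{\mathrm{ab}}(K_{\mathrm{fl}},G)\to H^{r}(k,\pi_{1}(G)_{I})$ to be an isomorphism for $r=1,2$, and the vanishing for $r\geq 3$ follows from the vanishing of the two flanking torus cohomology groups together with (for $r=3$) the surjectivity statement at $r=2$. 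For $r=2$ one should double-check that the connecting map lands correctly, i.e. that the diagram commutes up to the sign conventions built into the definition of $H^{\ast}_{\mathrm{ab}}$; this is where one uses that the isomorphism of Theorem \ref{phit}, hence that of Theorem \ref{kt2}, is \emph{functorial} in the torus.

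The main obstacle I anticipate is not the five-lemma bookkeeping but making the long exact sequence on the $k$-side genuinely match the geometric one: the functor $X_{\ast}(-)_{I}=X_{\ast}(-)\otimes_{\Z[I]}\Z$ is only right exact, so from the short exact sequence of cocharacter lattices $0\to X_{\ast}(T_{\mathrm{sc}})\to X_{\ast}(T)\to \pi_{1}(G)\to 0$ one does not immediately get a short exact sequence of $I$-coinvariants but rather a four-term sequence involving $H_{1}(I,\pi_{1}(G))$ (equivalently a $\mathrm{Tor}_{1}^{\Z[I]}$ term, or the fixed points $\pi_{1}(G)^{I}$ up to Tate twist when $I$ acts through a finite quotient, as it does here since $G$ splits over a finite extension). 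One must show this extra term is accounted for correctly — e.g. by working at the level of the complex $[X_{\ast}(T_{\mathrm{sc}})\to X_{\ast}(T)]$ and its derived $I$-coinvariants, and matching $R(-)_{I}$ of this complex with $\pi_{1}(G)_{I}$ in the derived category, which holds because $X_{\ast}(T_{\mathrm{sc}})$ and $X_{\ast}(T)$ are induced-free enough after passing to an open subgroup of $I$. Once that identification is in place the argument is a routine diagram chase, but the derived-coinvariants identification is the step that requires genuine care, and it is also the natural place to keep track of the cohomological-dimension-$\leq 1$ hypothesis, which guarantees that the potentially troublesome higher cohomology groups $H^{r}(k,-)$ of these lattices vanish for $r\geq 2$ and hence do not interfere with the five-term comparison.
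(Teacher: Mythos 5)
Your overall strategy --- resolve $\pi_{1}(G)$ by cocharacter lattices of two auxiliary tori, compare the long exact sequence defining $H^{\ast}_{\rm ab}$ with the long exact sequence of $I$-coinvariant lattices over $k$, and conclude via Theorem \ref{kt2} --- is exactly the paper's strategy. The difference is the choice of resolution. You use the Borovoi presentation $[\lle T_{\rm sc}\to T\e]$ for a maximal torus $T$; the paper instead takes a flasque resolution $1\to F\to H\to G\to 1$ with $R:=H^{\tor}$ quasi-trivial and works with $[\lle X_{\ast}(F\e)\to X_{\ast}(R)\e]$. This choice buys a lot: since $R$ is quasi-trivial, $H^{1}(K,R)=0$ and (by Theorem \ref{kt2}(ii)) $H^{1}(k,X_{\ast}(R)_{I})=0$, so both long exact sequences collapse to four-term exact sequences in which $H^{1}_{\rm ab}$ and $H^{1}(k,\pi_{1}(G)_{I})$ appear as kernels and $H^{2}_{\rm ab}$, $H^{2}(k,\pi_{1}(G)_{I})$ as cokernels of the \emph{same} pair of maps, already identified by Theorem \ref{kt2}. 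The comparison isomorphism is then induced directly, with no five lemma and --- importantly --- no need to \emph{construct} the middle vertical arrow of your ladder. In your version that arrow does not come for free: the five lemma presupposes a commuting ladder with all verticals present, so you would have to build the map $H^{r}_{\rm ab}(K_{\rm fl},G)\to H^{r}(k,\pi_{1}(G)_{I})$ by hand before checking it is an isomorphism. This is repairable but is real extra work that the paper's choice of resolution eliminates.

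One step of your plan is wrong as stated and should be replaced by the paper's argument. The derived $I$-coinvariants of the complex $[\lle X_{\ast}(T_{\rm sc})\to X_{\ast}(T)\e]$ is \emph{not} $\pi_{1}(G)_{I}$ in the derived category: since the complex is quasi-isomorphic to $\pi_{1}(G)$ in degree $0$, its derived coinvariants has $H_{1}=H_{1}(I,\pi_{1}(G))$, which is a nonzero finite group in general, and passing to an open subgroup of $I$ does not make the lattices induced. What actually saves the argument is what the paper does (and what you hint at in your last sentence): the termwise coinvariant sequence is $0\to X_{\ast}(\cdot)_{I}/M\to X_{\ast}(\cdot)_{I}\to\pi_{1}(G)_{I}\to 0$ with $M$ a \emph{finite} $\g$-module (a quotient of $H_{1}(J,\pi_{1}(G))$), and $H^{\le r}(k,M)=0$ for $r\geq 2$ because $\mathrm{cd}(k)\leq 1$, so $M$ is invisible in exactly the degrees where the comparison with Theorem \ref{kt2} is made. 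With that substitution, and with the middle map constructed rather than assumed, your proof goes through.
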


\begin{corollary} [=Corollary \ref{abcor}] Assume that $k$ has cohomological dimension $\leq 1$ and let $G$ be a connected reductive algebraic group over $K$. Then there exists a bijection of pointed sets
\[
H^{1}(K,G\le)\simeq H^{\le 1}(k,\pi_{1}(G\le)_{\lbe I}\lbe).
\]
In particular, $H^{1}(K,G\le)$ can be endowed with an abelian group structure.
\end{corollary}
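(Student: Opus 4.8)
The plan is to deduce the corollary from Theorem \ref{abt} by comparing ordinary and abelian cohomology via Borovoi's abelianization map. Recall that for any connected reductive $K$-group $G$ there is a canonical and functorial map of pointed sets
\[
\mathrm{ab}^{1}_{K}\colon H^{1}(K,G\le)\lra H^{1}_{\mathrm{ab}}(K_{\mathrm{fl}},G\e),
\]
the abelianization map, available over an arbitrary base field (and in the flat topology, as is needed here when $k$ has positive characteristic). The main task is to show that $\mathrm{ab}^{1}_{K}$ is bijective in our setting. For injectivity one uses that two classes have the same image under $\mathrm{ab}^{1}_{K}$ precisely when one is obtained from the other by twisting by an element of $H^{1}(K,{}^{c}G^{\mathrm{sc}})$, where $G^{\mathrm{sc}}$ is the simply connected central cover of the derived group $G^{\mathrm{der}}$ of $G$ and ${}^{c}G^{\mathrm{sc}}$ denotes a suitable inner form of $G^{\mathrm{sc}}$; since every inner form of a semisimple simply connected group is again semisimple and simply connected, and since $H^{1}(K,G')=1$ for every semisimple simply connected $K$-group $G'$ by a theorem of Bruhat--Tits, valid because $K$ is complete discretely valued with perfect residue field of cohomological dimension $\leq 1$, all fibres of $\mathrm{ab}^{1}_{K}$ are singletons. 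For surjectivity one invokes the same vanishing together with the exact sequence that relates $H^{1}_{\mathrm{ab}}(K_{\mathrm{fl}},G\e)$ to the cohomology of the torus $G^{\mathrm{tor}}=G/G^{\mathrm{der}}$ and of the finite central $K$-group scheme $\krn(G^{\mathrm{sc}}\to G\e)$: the obstruction to lifting a class from $H^{1}_{\mathrm{ab}}$ to $H^{1}(K,G\le)$ vanishes over such $K$. Equivalently, one may cite Borovoi's bijectivity criterion directly, its hypotheses being met here.

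Granting that $\mathrm{ab}^{1}_{K}$ is bijective, the corollary follows at once: composing $\mathrm{ab}^{1}_{K}$ with the isomorphism furnished by Theorem \ref{abt} in the case $r=1$ produces a bijection of pointed sets
\[
H^{1}(K,G\le)\overset{\!\sim}{\lra}H^{\le 1}\be\big(k,\pi_{1}(G\le)_{\lbe I}\lbe\big).
\]
The right-hand side, being the first continuous cohomology group of the discrete $\g$-module $\pi_{1}(G\le)_{\lbe I}$, is canonically an abelian group, and transporting this structure along the above bijection equips $H^{1}(K,G\le)$ with a structure of abelian group.

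The crux is therefore the bijectivity of $\mathrm{ab}^{1}_{K}$. Over $p$-adic fields this is a theorem of Borovoi; the work in the present generality consists in checking that its two inputs---principally the Bruhat--Tits vanishing $H^{1}(K,G')=1$ for $G'$ semisimple simply connected, and secondarily the corresponding vanishing one cohomological degree higher---remain available when $K$ is complete discretely valued with perfect residue field of cohomological dimension $\leq 1$. Everything else is formal once Theorem \ref{abt} has been established.
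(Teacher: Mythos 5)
Your proposal follows essentially the same route as the paper: both reduce the corollary to the bijectivity of the abelianization map $\mathrm{ab}^{1}\colon H^{1}(K,G\le)\to H^{\le 1}_{\rm{ab}}(K_{\rm{fl}},G\le)$, established from the Bruhat--Tits vanishing $H^{1}\big(K,\widetilde{G}\e\big)=1$ for the simply connected cover together with a degree-two input, and then compose with Theorem \ref{abt} for $r=1$. The paper packages the degree-two input by citing Douai's theorem to show that $K$ is a field of Douai type and then invoking a bijectivity criterion valid in the flat setting, which is the precise form of the ``Borovoi-type criterion'' you gesture at; this matters in equal positive characteristic, where the abelian cohomology must be taken in the flat topology.
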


\section*{Acknowledgements} We are very grateful to Mikhail Borovoi for some valuable suggestions and for sending us the proof of Lemma \ref{bor}. We also thank Rony Bitan for helpful comments.

\section{Preliminaries}

\subsection{The Basic Setting}
We keep the notation introduced in the previous section.
If $K$ is any field and $T$ is a $K$-torus, let $\underline{X}^{\ast}\be(T\e):=\underline{\Hom}_{\e K}(T,\G_{m,K})$ be the \'etale $K$-sheaf of characters of $T$ and set  $X^{\ast}\be(T\e)=\underline{X}^{\ast}\be(T\e)(K^{\sep})$. Note that, since  $\underline{X}^{\ast}\be(T\e)$ is locally constant, it is represented by a unique (commutative) \'etale $K$-group scheme. See \cite[Proposition II.9.2.3, p.~153]{t}. There exists a canonical isomorphism of \'etale $K$-sheaves
\begin{equation}\label{tor}
T=\underline{\Hom}_{\e K}\lbe(\e\underline{X}^{\ast}\be(T\e),\G_{m,K}).
\end{equation}
See, for example, \cite[Theorem 0.3.12, p.~11]{bra}. In particular, if $L/K$ is any Galois subextension of $K^{\sep}/K$ which splits $T$, then there exists a canonical isomorphism of $\text{Gal}(L/K)$-modules 
\begin{equation}\label{thom}
T(L)=\Hom\e(X^{\ast}\be(T),L^{\be\ast}).
\end{equation}
Now let $\underline{X}_{\le\ast}\lbe(T\e):=\underline{\Hom}_{\e K}(\G_{m,K},T\e)$ be the \'etale $K$-sheaf of cocharacters of $T$ and set $X_{\ast}\lbe(T\e)=\underline{X}_{\le\ast}\lbe(T\e)(K^{\sep})$. We will identify $\underline{X}_{\le\ast}\lbe(T\e)$ with $\underline{X}^{\ast}\be(T)^{\vee}:=\underline{\Hom}_{\e K}\e(\underline{X}^{\ast}\be(T),\Z_{K})$ and $X_{*}\lbe(T\e)$ with
$X^{*}\be(T)^{\vee}=\Hom(X^{*}\be(T),\Z)$.

Let $S$ be a scheme and let $(\mr{Sch}/S\e)^{\,\sh}_{\rm{\acute{e}t}}$ be the category of sheaves of sets on the \'etale site over $S$. By \cite[I, 1.1]{sga3} (see also \cite[Theorem II.3.1.2, p.~97]{t}), the functor $h_{\lbe S}\colon (\mr{Sch}/S\e)\to (\mr{Sch}/S\e)^{\,\sh}_{\rm{\acute{e}t}},\,Y\mapsto \Hom_{S}(-,Y\le)$, is fully faithful. If $Y$ is an $S$-scheme, we will identify $Y$ with $h_{\lbe S}(Y)$, i.e., with the \'etale sheaf it represents. If $S=\spec F$, where $F$ is a field, then 
\begin{equation}\label{ff}
h_{F}\colon (\mr{Sch}/F\e)\to (\mr{Sch}/F\e)^{\,\sh}_{\rm{\acute{e}t}},\quad Y\mapsto \Hom_{F}(-,Y\le),
\end{equation}
is, in fact, an equivalence of categories by \cite[p.~54, last paragraph]{mi1}.

Now recall $S=\spec A$ and $i\colon\spec k \to S$. Let $j\colon\spec K\to S$ be the canonical morphism. The N\'eron model $\s T$ of $T$ over $S$ is a smooth and
separated $S$-group scheme which represents the sheaf
$j_{\e *}T$ on the \'etale (in fact, small smooth) site over $S$. See \cite[Proposition 10.1.6, p.~292]{blr}. With one exception (namely, in Proposition \eqref{phi-ex0}(i)), we will regard $j_{\e *}T$ as an \'etale sheaf on $S$ and identify it with (the \'etale sheaf on $S$ represented by) $\s T$. Thus we may write $j_{*}T=\s T$. The identity component $\s T^{\e 0}$ of $\s T$ is a smooth affine $S$-group
scheme of finite type. See \cite[Proposition 3, p.~18]{km} and \cite[$\text{VI}_{\text{B}}$, Corollary 3.6]{sga3}. Now set $\s T_{\lbe s}=\s T\times_{S}\spec k$ and $\s T_{\lbe s}^{\e 0}=\s T^{\e 0}\times_{S}\spec k$. Then $\s T_{s}^{\e 0}$ is a smooth, connected and affine $k$-group scheme of finite type (see \cite[Proposition 17.3.3(iii)]{ega4.4}, \cite[Proposition 1.6.2(iii)]{ega2} and \cite[Proposition 6.3.4(iii), p.~304]{ega1}). By
\cite[II, \S5, no.1, Proposition 1.8, p.~237]{dg}, the \'etale $k$-group scheme
$\pi_{0}(\s T_{\lbe s}):=\s T_{\lbe s}/\le\s T_{s}^{\e 0}$ has the following universal property: if $E$ is an \'etale $k$-group scheme and $\s T_{\lbe s}\to E$ is a homomorphism of $k$-group schemes, then there exists a unique homomorphism of $k$-group schemes $\pi_{0}(\s T_{\lbe s})\to E$ such that the following diagram commmutes
\begin{equation}\label{diag1}
\xymatrix{\s T_{\lbe s}\ar@{->>}[r]\ar[dr]& \pi_{0}(\s T_{\lbe s})\ar[d]\\
&E.&
}
\end{equation}
The $k$-group schemes $\s T_{\lbe s}$ and $\s T_{s}^{\e 0}$ represent the \'etale $k$-sheaves $i^{*}\s T$ and $i^{*}\s T^{\e 0}$, i.e., we may write $i^{*}\s T=\s T_{\lbe s}$ and $i^{*}\s T^{\e 0}=\s T_{s}^{\e 0}$ as \'etale sheaves on $\spec k$. Then the \'etale $k$-sheaf $\phi(T\e):=i^{*}\lbe\big(\lbe\s T/\s T^{\e 0}\e\big)$ is represented by $\pi_{0}(\s T_{\lbe s})$, i.e,
\[
\phi(T\e)=\pi_{0}(\s T_{\lbe s})
\]
as \'etale sheaves on $\spec k$. We will often identify the \'etale sheaf $\phi(T\e)$ with the $\g$-module $\phi(T\e)\big(\e\kbar\e\big)=\pi_{0}(\s T_{\lbe s})\big(\e\kbar\e\big)$ (see \cite[Corollary II.2.2(i), p.~94]{t}). By \cite[Theorem 2.3.2, p.~51]{bra}, $\phi(T\e)$ is a finitely generated $\g$-module. Now, since $j^{*}\be\big(\be\s T/\s T^{\e 0}\le\big)=0$, there exists a canonical isomorphism of \'etale sheaves
$i_{*}\phi(T\e)=\s T/\s T^{\e 0}$ (see \cite[proof of Theorem II.8.1.2, p.~135]{t}). Thus there exists a canonical exact sequence of \'etale sheaves on $S$
\begin{equation}\label{pseq}
0\to\s T^{\e 0}\to\s T\to i_{*}\phi(T\e)\to 0.
\end{equation}
If $T=\G_{m,K}$, the preceding sequence is
\begin{equation}\label{pseq2}
0\to\G_{m,S}\to j_{*}\G_{m,K}\to i_{*}\Z_{\e k}\to 0,
\end{equation}
where the right-hand nontrivial morphism is induced by the valuation $v\colon K^{*}\to \Z$. See \cite[ \S10.1, Example 5, p.~291]{blr}.

\subsection{Group Cohomology}

Let $J$ be a finite group. We will write $|J\le|$ for its order and $\frak A_{\be J}$ for the augmentation ideal of $\Z[J\e]$, i.e., the kernel of the homomorphism $\Z[J\e]\to\Z,\,
\Sigma \,n_{\sigma}\sigma\mapsto \Sigma\, n_{\sigma}$. If $M$ is a finitely generated (left) $J$-module, $M_{J}:=M/\frak A_{\be J}M$ is the largest quotient of $M$ on which $J$ acts trivially. Let
$M^{\vee}=\Hom_{\e\Z\e}(M,\Z)$ be the linear dual of $M$.  Then $M^{\vee}$ has a natural  structure  of $J$-module by \cite[(1), p.~238]{ce} and
$M^{\vee}=(M/M_{\rm{tors}})^{\vee}$ is either zero or $\Z$-free. We have (see \cite[pp.~238-240]{ce})
\begin{equation}\label{eq1}
(M^{\vee})^{J}=(M_{J})^{\vee}
\end{equation}
and
\begin{equation}\label{eq2}
M^{\vee\vee}=M/M_{\rm{tors}}.
\end{equation}
In particular,
$M^{\vee\vee}=M$ if $M$ is $\Z$-free. The kernel of the canonical norm map
\begin{equation}\label{norm}
N\colon M\to M^{J},\quad m\mapsto\sum_{\sigma\in J}\sigma\e m,
\end{equation}
will be denoted by ${}_{N}M$. Now recall that the Tate cohomology groups
$\widehat{ H}^{r}(J,M)$, for $r\in\Z$, are defined as follows:
$\widehat{ H}^{r}(J,M)= H^{r}(J,M)$ if $r\geq 1$,
$\widehat{ H}^{r}(J,M)=H_{-r-1}(J,M)$ if  $r\leq -2$ and
\[
\begin{array}{rcl}
\widehat{H}^{\e 0}(J,M)&=&M^{J}/NM,\\
\widehat{H}^{-1}(J,M)&=&{}_{N}M/\frak A_{\be J}M.
\end{array}
\]
Note that, if $J$ acts trivially on $M$ and $M$ is $\Z$-free, then ${}_{N}M=0$ and therefore $\widehat{ H}^{-1}(J,M)=0$. Further, by \cite[Chapter XII, Proposition 2.5, p.~236, and
Exercise 3, p.~263]{ce}, $\widehat{ H}^{r}(J,M)$ is a finite group which is
annihilated by $|J\le|$ for every $r\in\Z$.

Next, if $M$ is a finitely generated abelian group, we will write
$M^{D}=\Hom_{\e\Z\e}(M,\Q/\Z)$. If $M$ is free, then  $M^{D}=M^{\vee}\otimes_{\Z}\e\Q/\Z$ by \cite[Chapter XII, beginning of \S3, pp.~237-238]{ce}.

Now assume that $M$ is a $\Z$-free and finitely generated $J$-module. Since $M^{D}=M^{\vee}\otimes_{\Z}\e\Q/\Z$ as noted above, the
short exact sequence $0\to\Z\to\Q\to \Q/\Z\to 0$ induces a short exact
sequence of $J$-modules $0\to M^{\vee}\to M^{\vee}\otimes_{\Z}\Q\to M^{D}\to
0$. The latter sequence induces, in turn, a canonical isomorphism of abelian groups
$\widehat{ H}^{\le r-1}(J,M^{D})=\widehat{ H}^{\le r}(J,M^{\vee})$ for every $r\in\Z$.
On the other hand, by \cite[Chapter XII, \S6, Theorem 6.4, p.~249]{ce},
$\widehat{ H}^{\le r-1}(J,M^{D})$ is canonically isomorphic to
$\widehat{ H}^{-r}(J,M)^{D}$. Thus, for every $r\in\Z$, there exists a
canonical isomorphism of finite abelian groups
\begin{equation}\label{dual}
\widehat{ H}^{r}(J,M)^{D}=\widehat{ H}^{-r}\be\big(J,M^{\vee}\big).
\end{equation}

\begin{lemma}\label{-1-coh} Let $M$ be a $\Z$-free and finitely generated
$J$-module.
\begin{enumerate}
\item[(i)] There exists a canonical exact sequence of abelian groups
\[
0\to  H^{1}(J,M)^{D}\to \big(M^{\vee}\big)_{\be J}\to
\big(M^{J}\e\big)^{\!\vee}\to 0.
\]
\item[(ii)] If $M^{J}\neq M$, then $M/M^{J}$ is $\Z$-free and $\big(M/M^{J}\e\big)^{\be J}=0$.
\end{enumerate}
\end{lemma}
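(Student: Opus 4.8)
The plan is to prove (ii) first, since its conclusion is used repeatedly in (i). For the torsion-freeness of $M/M^{J}$: if $m\in M$ and $n\geq 1$ satisfy $nm\in M^{J}$, then $n(\sigma m-m)=\sigma(nm)-nm=0$ for every $\sigma\in J$, and since $M$ is $\Z$-free this forces $\sigma m=m$, i.e.\ $m\in M^{J}$. Hence $M/M^{J}$ is torsion-free, so (being finitely generated) $\Z$-free, and it is nonzero by hypothesis. For the vanishing of $(M/M^{J})^{J}$, suppose $m\in M$ satisfies $\sigma m-m\in M^{J}$ for all $\sigma\in J$. Using that $J$ acts trivially on $M^{J}$, one checks directly that $\sigma\mapsto\sigma m-m$ is a homomorphism from $J$ to the $\Z$-free group $M^{J}$; as $J$ is finite this homomorphism vanishes, so $m\in M^{J}$, and therefore $(M/M^{J})^{J}=0$.

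Turning to (i), the first step is to invoke (ii): since $M/M^{J}$ is $\Z$-free, the exact sequence of $J$-modules $0\to M^{J}\to M\to M/M^{J}\to 0$ splits over $\Z$, so applying $(-)^{\vee}=\Hom_{\Z}(-,\Z)$ yields an exact sequence of $J$-modules
\[
0\longrightarrow (M/M^{J})^{\vee}\longrightarrow M^{\vee}\xrightarrow{\ \rho_{0}\ }(M^{J})^{\vee}\longrightarrow 0,
\]
in which $\rho_{0}$ is restriction of functionals and $(M/M^{J})^{\vee}$ is identified with the subgroup $\{\,f\in M^{\vee}:f|_{M^{J}}=0\,\}$ of $M^{\vee}$. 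Because $J$ acts trivially on $M^{J}$, the map $\rho_{0}$ annihilates $\frak A_{\be J}M^{\vee}$, since $\rho_{0}\big((\sigma-1)f\big)(m)=f(\sigma^{-1}m-m)=0$ for $m\in M^{J}$; hence $\rho_{0}$ descends to a surjection $\rho\colon (M^{\vee})_{\be J}\twoheadrightarrow (M^{J})^{\vee}$ with $\ker\rho=(M/M^{J})^{\vee}/\frak A_{\be J}M^{\vee}$. This already establishes the exactness of the sequence in (i) at $(M^{J})^{\vee}$ and in the middle; what remains is to identify $\ker\rho$ with $H^{1}(J,M)^{D}$.

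The key point is the identification $(M/M^{J})^{\vee}={}_{N}M^{\vee}$ as subgroups of $M^{\vee}$, where ${}_{N}M^{\vee}$ is the kernel of the norm map \eqref{norm} on $M^{\vee}$. For $f\in M^{\vee}$ and $m\in M$ one has $(Nf)(m)=f(N_{M}m)$, where $N_{M}\colon M\to M^{J}$ is the norm map \eqref{norm} on $M$, so ${}_{N}M^{\vee}$ consists precisely of the functionals vanishing on the subgroup $N_{M}M\subseteq M^{J}$. Now $N_{M}M$ and $M^{J}$ have the same $\Z$-rank, because $N_{M}M\otimes_{\Z}\Q=(M\otimes_{\Z}\Q)^{J}=M^{J}\otimes_{\Z}\Q$; combined with the fact that $M^{J}$ is saturated in $M$ — which is exactly the torsion-freeness clause of (ii) — this shows that $M^{J}$ is the saturation of $N_{M}M$ in $M$, whence a $\Z$-linear functional on $M$ vanishes on $N_{M}M$ if and only if it vanishes on $M^{J}$. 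Therefore ${}_{N}M^{\vee}=(M/M^{J})^{\vee}$, so that
\[
\ker\rho={}_{N}M^{\vee}/\frak A_{\be J}M^{\vee}=\widehat{H}^{-1}(J,M^{\vee}),
\]
which by \eqref{dual} is isomorphic to $\widehat{H}^{\,1}(J,M)^{D}=H^{1}(J,M)^{D}$ (the last equality because $\widehat{H}^{\,1}=H^{1}$). Since $\rho$ is visibly functorial in $M$, the resulting sequence is canonical. The step I expect to be the main obstacle is this final identification — concretely, verifying that $M^{J}$ coincides with the saturation of $N_{M}M$ in $M$; the remaining verifications are formal manipulations with \eqref{dual} and the definitions.
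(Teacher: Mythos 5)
Your proof is correct, but it reaches the conclusion by a noticeably different route from the paper's, most visibly in part (ii). The paper proves (i) first: it applies the snake lemma to the comparison of the two presentations $\frak A_{\be J}(M^{\vee})\hookrightarrow M^{\vee}\twoheadrightarrow (M^{\vee})_{\be J}$ and ${}_{N}(M^{\vee})\hookrightarrow M^{\vee}\twoheadrightarrow N(M^{\vee})$, obtaining $0\to \widehat{H}^{-1}(J,M^{\vee})\to (M^{\vee})_{\be J}\to N(M^{\vee})\to 0$, and then identifies the \emph{cokernel} term $N(M^{\vee})$ with $(M^{J})^{\vee}$ by the chain of double-dual identities \eqref{eq1}--\eqref{eq2}; part (ii) is then extracted a posteriori by dualizing the norm sequence again ($({}_{N}(M^{\vee}))^{\vee}=M/M^{J}$) and a torsion argument for $({}_{N}(M^{\vee}))_{J}$. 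You reverse the logical order: you prove (ii) directly and elementarily (the saturation computation for torsion-freeness, and the observation that $\sigma\mapsto\sigma m-m$ becomes a homomorphism $J\to M^{J}$, hence vanishes), then dualize the invariants sequence $0\to M^{J}\to M\to M/M^{J}\to 0$ of $M$ itself and identify the \emph{kernel} term with $\widehat{H}^{-1}(J,M^{\vee})$ via the equality $\{f: f|_{N M}=0\}=\{f:f|_{M^{J}}=0\}$, which rests on $M^{J}$ being the saturation of $NM$. Both arguments hinge on the duality \eqref{dual}, and they produce the same canonical sequence from opposite ends; what your version buys is a self-contained, computation-free-of-double-duals proof of (ii) (the paper's (ii) only holds together because (i) has already been set up), at the cost of the extra saturation lemma. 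All the individual steps you flag check out, including the rank comparison $NM\otimes_{\Z}\Q=(M\otimes_{\Z}\Q)^{J}=M^{J}\otimes_{\Z}\Q$ and the compatibility $(Nf)(m)=f(Nm)$ under the contragredient action.
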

\begin{proof}
Applying the snake lemma to the exact commutative diagram
\[
\xymatrix{0\ar[r]& \frak A_{\be J}\big(M^{\vee}\big)\ar[d]\ar[r]&
M^{\vee}\ar@{=}[d]\ar[r]& \big(M^{\vee}\big)_{\be J}\ar[d]\ar[r]& 0\\
0\ar[r]&{}_{N}\big(M^{\vee}\big)\ar[r]&M^{\vee}
\ar[r]&N\be\big(M^{\vee}\big)\ar[r]&0
}
\]
and using \eqref{dual}, we obtain an exact sequence
\begin{equation}\label{ex1}
0\to  H^{1}(J,M)^{D}\to
\big(M^{\vee}\big)_{\be J}\to N\be\big(M^{\vee}\big)\to 0.
\end{equation}
Now, since
$N\big(M^{\vee}\big)$ is a subgroup of the $\Z$-free group $M^{\vee}$ and
$ H^{1}(J,M)^{D}$ is torsion, the latter sequence induces, by \eqref{eq1} and \eqref{eq2}, canonical isomorphisms
\begin{equation}\label{exnm}
N\big(M^{\vee}\big)=\big(M^{\vee}\big)_{\be J}\big/\big(M^{\vee}\big)_{\be
J,\,\rm{tors}}=\big(\big(M^{\vee}\big)_{\be
J}\e\big)^{\vee\vee}=\Big(\big(M^{\vee\vee}\big)^{J}\e\Big)^{\vee}=
\big(M^{J}\e\big)^{\be\vee}.
\end{equation}
Assertion (i) is now clear. 

To prove (ii), we first note that, since $M^{J}$ is $\Z$-free, \eqref{exnm} yields a canonical isomorphism $N\big(M^{\vee}\big)^{\vee}=\big(M^{J}\e\big)^{\be\vee\vee}=M^{J}$. Therefore, taking the linear dual of the bottom row of the preceding diagram, we obtain an isomorphism
\begin{equation}\label{mmm}
({}_{N}(M^{\vee}))^{\vee}=M/M^{J}.
\end{equation}
Consequently, if $M^{J}\neq M$, then $M/M^{J}$ is $\Z$-free. Further, it follows from \eqref{ex1} that, if $M^{\prime}$ is a $\Z$-free and finitely generated $J$-module such that $NM^{\prime}=0$, then $M_{J}^{\prime}$ is torsion. Thus $({}_{N}(M^{\vee}))_{J}$ is torsion. Now, by \eqref{eq1} and \eqref{mmm}
\[
\big(M/M^{J}\e\big)^{\be J}=\big(({}_{N}(M^\vee))^{\vee}\le\big)^{\be J}= \big(({}_{N}(M^\vee))_J\big)^{\vee}=0,
\]
since $({}_{N}(M^{\vee}))_{J}$ is torsion.
\end{proof}

\subsection{A Canonical Resolution of $T$}
Let $L$ be the minimal splitting field of $T$, i.e., the fixed field of the kernel of the canonical homomorphism $\mathcal G\to\text{Aut}(X^{\lbe\ast}\be(T\e))$. We will write $T_{L}$ for the (split) $L$-torus $T\times_{\spec K}\spec L$. Let $J$ be the inertia subgroup of $\mathrm{Gal}(L/K)$. Then $I$ acts on the free and finitely generated $\Z$-module $X^{\lbe\ast}\be(T\e)$ through the finite quotient $J$ and \eqref{norm} is a map
\begin{equation}\label{norm2}
N\colon X^{\lbe\ast}\be(T\e)\to X^{\lbe\ast}\be(T\e)^{J}.
\end{equation}
Note that, since $\e  H^{1}(I^{\e\prime},X^{\lbe\ast}\be(T\e))=\Hom(I^{\e\prime},X^{\lbe\ast}\be(T\e))=0$ for any subgroup $I^{\e\prime}$ of $I$ which acts trivially on $X^{\lbe\ast}\be(T\e)$ (as $I^{\e\prime}$ is torsion and $X^{\lbe\ast}\be(T\e)$ is torsion-free), the inflation-restriction exact sequence
(see \cite[VII, \S6, Proposition 4, p.~117]{self}) shows that $ H^{1}(J,X^{\lbe\ast}\be(T\e))= H^{1}(I,X^{\lbe\ast}\be(T\e))$.

A $K$-torus $T$ is said to have {\it multiplicative reduction} if the special fiber
$\s T_{\be s}^{\e 0}$ of $\s T^{\e 0}$ is a $k$-torus. An equivalent condition is that $I$ act trivially on $X^{\lbe\ast}\be(T\e)$, i.e., $T$ splits over $\Knr$. If this is the case, then the
$\g$-module of characters of $\s T_{\be s}^{\e 0}$ is $X^{\lbe\ast}\be(T\e)$. See
\cite[Proposition-Definition 1.1, p.~462]{nx}. Further, there exists a canonical isomorphism of $\g$-modules
\begin{equation}\label{mris}
\phi(T\e)\overset{\!\sim}{\to} X_{\lbe\ast}\lbe(T\e).
\end{equation}
See \cite[Theorem 1.1.2, p.~29]{bra} and recall the identification 
$X^{*}\be(T)^{\vee}=X_{*}\lbe(T\e)$.

\begin{proposition} \label{phi-ex0} Let $0\to T_{1}\to T_{2}\to T_{3}\to 0$ be an
exact sequence of $K$-tori. Assume that the following conditions hold:
\begin{enumerate}
\item[(i)] ${\rm R}^{1}j_{*}T_{1}=0$ for the smooth topology on $S$, and
\item[(ii)] $\phi(T_{1})$ is torsion-free.
\end{enumerate}
Then the induced sequence of $\g$-modules $0\to \phi(T_{1})\to \phi(T_{2})\to
\phi(T_{3})\to 0$ is exact.
\end{proposition}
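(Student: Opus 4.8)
The plan is to descend the given sequence to the Néron models by means of hypothesis (i), then to pass to the sheaves of components via the canonical resolution \eqref{pseq}; the whole argument will rest on the exactness of the corresponding sequence of identity components, which is where hypothesis (ii) enters. First I would use (i) to obtain a short exact sequence of étale sheaves $0\to\s T_{1}\to\s T_{2}\to\s T_{3}\to 0$ on $S$. Regarding $\s T_{i}=j_{*}T_{i}$ on the smooth site over $S$, and noting that $0\to T_{1}\to T_{2}\to T_{3}\to 0$ is exact for the smooth topology on $\spec K$ (as $T_{2}\to T_{3}$ is a smooth cover), the long exact sequence of the functors ${\rm R}^{q}j_{*}$ together with ${\rm R}^{1}j_{*}T_{1}=0$ gives exactness for the smooth topology on $S$; then $\s T_{2}\to\s T_{3}$ is a faithfully flat homomorphism of smooth $S$-group schemes (flatness via the fibrewise criterion, its fibres being surjective homomorphisms of group schemes over a field), hence smooth, hence an étale cover, so the sequence is exact for the étale topology as well.

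Next I would consider the commutative diagram of étale sheaves on $S$ whose exact columns are the three instances of \eqref{pseq}, whose middle row is the sequence just obtained, whose top row is $\s T_{1}^{\,0}\to\s T_{2}^{\,0}\to\s T_{3}^{\,0}$ and whose bottom row is $i_{*}\phi(T_{1})\to i_{*}\phi(T_{2})\to i_{*}\phi(T_{3})$. Since the columns and the middle row are exact, the nine lemma reduces the statement to the exactness of the completed top row $0\to\s T_{1}^{\,0}\to\s T_{2}^{\,0}\to\s T_{3}^{\,0}\to 0$: indeed this forces the bottom row to be a short exact sequence, and applying the exact functor $i^{*}$ (and using $i^{*}i_{*}=\id$) then yields the desired exact sequence $0\to\phi(T_{1})\to\phi(T_{2})\to\phi(T_{3})\to 0$.

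It remains to check that $0\to\s T_{1}^{\,0}\to\s T_{2}^{\,0}\to\s T_{3}^{\,0}\to 0$ is exact. The left-hand map is a monomorphism, being the restriction of the closed immersion $\s T_{1}\to\s T_{2}$ to the open subscheme $\s T_{1}^{\,0}$ (which lands in $\s T_{2}^{\,0}$ since $\spec A$ has only two points and the image of $\s T_{1}^{\,0}$ meets each fibre of $\s T_{2}$ inside its identity component). For the right-hand map I would argue fibrewise: the generic fibre is the faithfully flat homomorphism $T_{2}\to T_{3}$; on the special fibre, the kernel of $\s T_{2,s}^{\,0}\to\s T_{3,s}$ equals $\s T_{1,s}\cap\s T_{2,s}^{\,0}$, which lies between $\s T_{1,s}^{\,0}$ and $\s T_{1,s}$ and hence has dimension $\dim T_{1}$, so the image of $\s T_{2,s}^{\,0}$ in $\s T_{3,s}$ is a connected closed subgroup of dimension $\dim T_{3}=\dim\s T_{3,s}^{\,0}$, hence equals $\s T_{3,s}^{\,0}$, and $\s T_{2,s}^{\,0}\to\s T_{3,s}^{\,0}$ is faithfully flat; the fibrewise criterion for flatness then shows that $\s T_{2}^{\,0}\to\s T_{3}^{\,0}$ is faithfully flat, and being flat with smooth fibres it is smooth, hence an epimorphism of étale sheaves. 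Finally, for exactness in the middle I must identify the kernel $\krn(\s T_{2}^{\,0}\to\s T_{3}^{\,0})=\s T_{1}\cap\s T_{2}^{\,0}$, an open $S$-subgroup scheme of $\s T_{1}$, with $\s T_{1}^{\,0}$: its generic fibre is $T_{1}$, and its special fibre $\s T_{1,s}\cap\s T_{2,s}^{\,0}$ contains $\s T_{1,s}^{\,0}$ with finite quotient, which is a finite subgroup of $\phi(T_{1})$ and hence trivial by hypothesis (ii); since $\spec A$ has only two points, an open immersion of $S$-schemes that is an isomorphism on both fibres is an isomorphism, so $\s T_{1}^{\,0}=\s T_{1}\cap\s T_{2}^{\,0}$, and the top row is exact.

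The main obstacle is this last identification of the kernel: hypothesis (ii) is essential here, since if $\phi(T_{1})$ had nontrivial torsion some non-identity components of $\s T_{1,s}$ could be absorbed into the identity component of $\s T_{2,s}$, so $\s T_{1,s}\cap\s T_{2,s}^{\,0}$ would strictly contain $\s T_{1,s}^{\,0}$ and injectivity of $\phi(T_{1})\to\phi(T_{2})$ would fail. A secondary technical point is the passage between the smooth and étale topologies in the first step, together with the verification that the relevant homomorphisms of $S$-group schemes are flat; both are handled by the fibrewise criteria invoked above.
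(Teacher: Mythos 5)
Your proposal is correct, and its skeleton coincides with the paper's: use (i) to get an exact sequence of N\'eron models, deduce right-exactness of $\phi$, and then kill the kernel of $\phi(T_{1})\to\phi(T_{2})$ using (ii). The difference is that the paper outsources the two substantive steps to Brahm's thesis (Theorem 2.3.1 for the surjection $\phi(T_{2})\to\phi(T_{3})$ and exactness at $\phi(T_2)$, and Theorem 2.3.4 for the finiteness of $\krn[\phi(T_{1})\to\phi(T_{2})]$, which is then zero by torsion-freeness), whereas you prove both from scratch: the nine-lemma reduction to the sequence of identity components, the fibrewise dimension count showing $\s T_{2}^{\e 0}\to\s T_{3}^{\e 0}$ is faithfully flat, and the identification $\s T_{1}\cap\s T_{2}^{\e 0}=\s T_{1}^{\e 0}$ --- whose failure is measured by exactly the finite subgroup of $\phi(T_{1})$ that Brahm's Theorem 2.3.4 controls, so hypothesis (ii) enters at the same conceptual point in both arguments. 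Your version is more self-contained and makes visible the geometric meaning of (ii) (no nontrivial component of $\s T_{1,s}$ can be absorbed into $\s T_{2,s}^{\e 0}$); the paper's version is shorter and avoids the mild technical care you correctly flag (passing from smooth-topology to \'etale-topology exactness via the fibrewise flatness criterion, and checking that a surjective open immersion of open subgroup schemes of $\s T_{1}$ is an isomorphism).
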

\begin{proof} By (i), the sequence of N\'eron models $0\to \s T_{1}\to \s T_{2}\to \s T_{3}\to 0$ is exact in the smooth topology. Thus, by \cite[Theorem 2.3.1, p.~50]{bra}, the induced sequence of $\g$-modules $\phi(T_{1})\to \phi(T_{2})\to \phi(T_{3})\to 0$ is exact. On the other hand, by \cite[Theorem 2.3.4, p.~52]{bra}, $\mathrm{Ker}\e[\e\phi(T_{1})\to \phi(T_{2})]$ is a finite $\g$-submodule of $\phi(T_{1})$, which is therefore zero by (ii). This completes the proof.
\end{proof}

\begin{corollary} \label{phi-ex} Let $0\to T_{1}\to T_{2}\to T_{3}\to 0$ be an
exact sequence of $K$-tori, where $T_{1}$ has {\it{multiplicative reduction}}.
Then the induced sequence of $\g$-modules $0\to \phi(T_{1})\to \phi(T_{2})\to
\phi(T_{3})\to 0$ is exact.
\end{corollary}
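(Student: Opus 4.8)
The plan is to deduce the corollary from Proposition~\ref{phi-ex0} by checking that, when $T_{1}$ has multiplicative reduction, both hypotheses of that proposition are satisfied. Hypothesis (ii) is immediate from the material above: $T_{1}$ has multiplicative reduction precisely when $I$ acts trivially on $X^{\ast}(T_{1})$, i.e. when $T_{1}$ splits over $\Knr$, and in that case \eqref{mris} supplies a $\g$-equivariant isomorphism $\phi(T_{1})\overset{\!\sim}{\to}X_{\ast}(T_{1})$; since $X_{\ast}(T_{1})=X^{\ast}(T_{1})^{\vee}$ is a finitely generated free $\Z$-module, $\phi(T_{1})$ is torsion-free.

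The substance of the proof is the verification of hypothesis (i), namely ${\rm R}^{1}j_{*}T_{1}=0$ for the smooth topology on $S$. I would proceed in three steps. First, since $j$ is an open immersion, base change along it yields $j^{*}{\rm R}^{1}j_{*}T_{1}=0$, so ${\rm R}^{1}j_{*}T_{1}$ is concentrated on the closed point of $S$. Second, I reduce to the split case: let $L/K$ be the minimal splitting field of $T_{1}$; multiplicative reduction forces $L/K$ to be unramified, so its ring of integers $A_{L}$ is finite \'etale over $A$ and $S_{L}:=\spec A_{L}\to S$ is a surjective finite \'etale morphism, in particular a covering for the smooth topology, along which it suffices to check the vanishing. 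By flat base change along $S_{L}\to S$, the restriction of ${\rm R}^{1}j_{*}T_{1}$ to $S_{L}$ is ${\rm R}^{1}(j_{L})_{*}(T_{1}\times_{\spec K}\spec L)$, where $j_{L}\colon\spec L\to S_{L}$, and since $T_{1}$ is split over $L$ this equals $\big({\rm R}^{1}(j_{L})_{*}\G_{m,L}\big)^{\!d}$ with $d=\dim T_{1}$. Third, ${\rm R}^{1}(j_{L})_{*}\G_{m,L}=0$: it too is concentrated on the closed point of $S_{L}$, where its stalk is $\Pic$ of the punctured spectrum of the strict henselization of $A_{L}$, i.e. $\Pic$ of a field, hence $0$. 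Consequently the restriction of ${\rm R}^{1}j_{*}T_{1}$ to $S_{L}$ vanishes, and therefore ${\rm R}^{1}j_{*}T_{1}=0$.

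With hypotheses (i) and (ii) in hand, Proposition~\ref{phi-ex0} applies and gives the exactness of the induced sequence $0\to\phi(T_{1})\to\phi(T_{2})\to\phi(T_{3})\to 0$, which is the assertion of the corollary. The only point requiring care is hypothesis (i): because the exactness of the sequence of N\'eron models $0\to\s T_{1}\to\s T_{2}\to\s T_{3}\to 0$ is asked for the smooth topology in Proposition~\ref{phi-ex0}, one needs the vanishing of ${\rm R}^{1}j_{*}T_{1}$ in that topology, and the reduction above has been arranged so that each step --- the two flat base changes, the use of $S_{L}\to S$ as a covering, and the $\Pic$-of-a-field computation --- is valid verbatim for the smooth topology, so that no separate comparison with the \'etale topology enters.
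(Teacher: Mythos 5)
Your overall strategy coincides with the paper's: both deduce the corollary from Proposition \ref{phi-ex0} by checking its two hypotheses, and your verification of hypothesis (ii) via \eqref{mris} is exactly the argument given there. The difference lies in hypothesis (i): the paper simply quotes \cite[Corollary 4.2.6, p.~82]{bra} for the vanishing of ${\rm R}^{1}j_{*}T_{1}$ in the smooth topology when $T_{1}$ splits over $\Knr$, whereas you attempt a direct proof of that vanishing, and your proof has a gap at precisely the point you flag as ``the only point requiring care.''

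Your first two reductions (concentration on the closed fibre, and passage along the finite \'etale cover $S_{L}\to S$ to reduce to $\G_{m,L}^{\le d}$) are sound for either topology. The problem is the final step. Computing the stalk of ${\rm R}^{1}(j_{L})_{*}\G_{m,L}$ at the closed geometric point of $S_{L}$ and finding $\Pic$ of a field is an argument on the \emph{small \'etale site} of $S_{L}$, which has exactly two points; it does not transfer ``verbatim'' to the smooth topology. A sheaf for the smooth topology is a sheaf on the category of all smooth $S_{L}$-schemes, and its vanishing is not detected by the two geometric points of $S_{L}$ alone: one must control the stalks at geometric points $\bar u$ of arbitrary smooth $U\to S_{L}$, where the relevant group is $\Pic\big(\cO_{U,\bar u}^{\,\mathrm{sh}}[1/\pi]\big)$ --- in general not the Picard group of a field. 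These groups do vanish, but for a different reason (the strictly henselian local ring of a smooth $S_{L}$-scheme is regular, hence factorial, so every open subscheme of its spectrum has trivial Picard group); alternatively one can invoke a comparison between smooth and \'etale higher direct images for smooth group schemes. Either way an additional input is needed that your write-up does not supply; as it stands, your third step only proves the \'etale-topology statement, which is not what Proposition \ref{phi-ex0}(i) requires. If you supply that input (or simply cite \cite[Corollary 4.2.6, p.~82]{bra}, as the paper does), the rest of your argument goes through.
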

\begin{proof} Since $T_{1}$ splits over $\Knr$, ${\rm R}^{1}j_{*}T_{1}=0$ for the smooth topology on $S$ by \cite[Corollary 4.2.6, p.~82]{bra}. On the other hand, by \eqref{mris}, $\phi(T_{1})\simeq X_{*}(T_{1})$, which is torsion-free. The corollary is now immediate from the proposition.
\end{proof}

A $K$-torus $T$ is said to have {\it unipotent reduction} if the special fiber
$\s T_{\be s}^{\e 0}$ of $\s T^{\e 0}$ is a unipotent $k$-group scheme.

\begin{lemma}\label{unip-red} A $K$-torus $T$ has unipotent reduction if, and
only if, $X^{\lbe\ast}\be(T\e)^{I}=0$.
\end{lemma}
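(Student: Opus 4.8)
The plan is to reinterpret ``unipotent reduction'' as the vanishing of the group of cocharacters of $T$ defined over the maximal unramified extension, and to compare the latter with $X^{\ast}\be(T\e)^{\le I}$. It is convenient to pass to $\spec\Rnr$: the ring $\Rnr$ is a strictly henselian discrete valuation ring with residue field $\kbar$ and fraction field $\Kcnr$, the base change $\s T\times_{S}\spec\Rnr$ is the N\'eron model of $T_{\Kcnr}$ (N\'eron models are compatible with unramified base change, \cite{blr}), its identity component $\s T^{\e 0}\times_{S}\spec\Rnr$ is smooth and affine over $\Rnr$ with special fibre $\s T^{\e 0}_{s}\times_{k}\spec\kbar$ and generic fibre $T_{\Kcnr}$, and one has $\Hom_{\e\Kcnr}(\G_{m,\Kcnr},T_{\Kcnr})=X_{\ast}\lbe(T\e)^{\le I}$. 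I would first record two reductions. Since $\s T=j_{\e *}T$ is an abelian sheaf, $\s T$ --- hence $\s T^{\e 0}_{s}$ --- is a commutative group scheme; as $k$ is perfect, the smooth connected affine commutative $k$-group $\s T^{\e 0}_{s}$ is the direct product of its maximal subtorus $M$ and its unipotent radical (\cite{dg}), and unipotence is insensitive to the base change to $\kbar$, so $T$ has unipotent reduction if and only if $M=0$. On the other hand, \eqref{eq1} and the identification $X_{\ast}\lbe(T\e)=X^{\ast}\be(T)^{\vee}$ give $X_{\ast}\lbe(T\e)^{\le I}=\big(X^{\ast}\be(T\e)_{\le I}\big)^{\!\vee}$, which vanishes exactly when $X^{\ast}\be(T\e)_{\le I}$ is torsion; since $X^{\ast}\be(T\e)^{\le I}$ and $X^{\ast}\be(T\e)_{\le I}\otimes_{\Z}\Q$ have the same rank (the multiplicity of the trivial representation in $X^{\ast}\be(T\e)\otimes_{\Z}\Q$), this is equivalent to $X^{\ast}\be(T\e)^{\le I}=0$. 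Hence it remains to show that $M=0$ if and only if $X_{\ast}\lbe(T\e)^{\le I}=0$.

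Suppose first that $M\neq 0$. Since $\Rnr$ is strictly henselian, the smooth affine $\Rnr$-group scheme $\s T^{\e 0}\times_{S}\spec\Rnr$ admits a maximal subtorus $\s M$, which is an $\Rnr$-torus --- necessarily split --- whose special fibre is the maximal subtorus $M\times_{k}\spec\kbar$ of $\s T^{\e 0}_{s}\times_{k}\spec\kbar$; I would cite \cite[Exp.~XIV]{sga3} for the existence and fibrewise description of maximal tori of smooth affine group schemes over a strictly henselian base. Then $\s M$ has rank $\dim M\geq 1$, so its generic fibre is a nontrivial split subtorus of $T_{\Kcnr}$ and provides a nonzero cocharacter, whence $X_{\ast}\lbe(T\e)^{\le I}=\Hom_{\e\Kcnr}(\G_{m,\Kcnr},T_{\Kcnr})\neq 0$.

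Conversely, suppose $X_{\ast}\lbe(T\e)^{\le I}\neq 0$ and pick $0\neq\lambda\in X_{\ast}\lbe(T\e)^{\le I}=\Hom_{\e\Kcnr}(\G_{m,\Kcnr},T_{\Kcnr})$. Its image $T^{\prime}:=\lambda(\G_{m,\Kcnr})$ is a rank-one subtorus of $T_{\Kcnr}$, and $T^{\prime}\hookrightarrow T_{\Kcnr}$ is a monomorphism of \'etale sheaves; as $j_{\e *}$ is left exact, the induced morphism of N\'eron models $\s T^{\prime}\to\s T\times_{S}\spec\Rnr$ is a monomorphism of $\Rnr$-group schemes, and therefore so is its restriction to identity components $\G_{m,\Rnr}=(\s T^{\prime})^{\e 0}\to\s T^{\e 0}\times_{S}\spec\Rnr$ (recall \eqref{pseq2}). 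Since the formation of kernels commutes with base change, the special fibre $\G_{m,\kbar}\to\s T^{\e 0}_{s}\times_{k}\spec\kbar$ has trivial kernel, hence is a closed immersion, and its image is a nontrivial subtorus of $\s T^{\e 0}_{s}\times_{k}\spec\kbar$; therefore $M\neq 0$, and the converse follows.

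The step I expect to be the main obstacle is the one invoked in the second paragraph: the existence of a maximal subtorus, with the expected special fibre, of the smooth affine N\'eron model component over the strictly henselian ring $\Rnr$. The other ingredients --- left-exactness of $j_{\e *}$, compatibility of kernels with base change, the structure of commutative affine algebraic groups over a perfect field, and the rank count --- are routine.
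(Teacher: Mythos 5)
Your proof is correct, but it takes a genuinely different route from the paper's. The paper quotes Nart--Xarles \cite{nx} for the fact that $T$ has unipotent reduction if and only if it contains no nontrivial subtorus with multiplicative reduction, translates this into the assertion that $X^{*}(T)$ admits no nonzero free quotient with trivial $I$-action, and then settles both implications by pure $J$-module algebra: the norm map $N\colon X^{*}(T)\to X^{*}(T)^{J}$ together with the finiteness of $\widehat{H}^{\e 0}(J,X^{*}(T))$ in one direction, and the finiteness of $H^{1}(I,Y)$ in the other. You instead argue geometrically over the strict henselization: you reduce unipotent reduction to the vanishing of the maximal torus $M$ of $\s T^{\e 0}_{s}$, identify $X^{*}(T)^{I}=0$ with $X_{*}(T)^{I}=0$ by a rank count ($\dim V^{J}=\dim V_{J}$ for $\Q[J]$-modules), and prove $M\neq 0\Leftrightarrow X_{*}(T)^{I}\neq 0$ by hand --- lifting $M$ to a (necessarily split) subtorus of $\s T^{\e 0}\times_{S}\spec\Rnr$ in one direction, and pushing a cocharacter through $j_{*}$ and reducing to the special fibre in the other. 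In effect you reprove the geometric input that the paper outsources to \cite{nx}, at the price of invoking the lifting of tori of the special fibre of a smooth affine group scheme over a henselian local base; that statement is in Exp.~XV of \cite{sga3} (the torus-lifting results there, rather than Exp.~XIV), and it does apply since $\s T^{\e 0}$ is smooth and affine with connected fibres, so the step you flagged as the main obstacle is fine. The remaining ingredients --- compatibility of N\'eron lft-models with $R\to\Rnr$, triviality of the kernel on special fibres of a monomorphism, splitness of tori over a strictly henselian base --- are all sound. The paper's argument is shorter given the citation; yours is more self-contained geometrically but leans more heavily on \cite{sga3}.
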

\begin{proof} By \cite[proof of Theorem 1.3]{nx}, $T$ has unipotent reduction
if, and only if, $T$ contains no nontrivial $K$-subtorus having multiplicative
reduction, i.e., $X^{\lbe\ast}\be(T\e)$ admits no free quotient on which $I$ (or, equivalently,
$J\e$) acts trivially. Assume that the latter holds and recall the norm map \eqref{norm2}. Since $NX^{\lbe\ast}\be(T\e)$ is a quotient of $X^{\lbe\ast}\be(T\e)$ with trivial $J$-action, we have $NX^{\lbe\ast}\be(T\e)=0$ and therefore $X^{\lbe\ast}\be(T\e)^{J}=X^{\lbe\ast}\be(T\e)^{J}/NX^{\lbe\ast}\be(T\e)=\widehat{ H}^{0}(J,X^{\lbe\ast}\be(T\e))$ is
a finite subgroup of the free group $X^{\lbe\ast}\be(T\e)$, i.e., $X^{\lbe\ast}\be(T\e)^{I}=X^{\lbe\ast}\be(T\e)^{J}=0$. Conversely,
assume that $X^{\lbe\ast}\be(T\e)^{I}=0$ and let $Y$ be an $I$-submodule of $X^{\lbe\ast}\be(T\e)$ such that $I$
acts trivially on $X^{\lbe\ast}\be(T\e)/Y$. Then the $I$-cohomology sequence associated to $0\to
Y\to X^{\lbe\ast}\be(T\e)\to X^{\lbe\ast}\be(T\e)/Y\to 0$ shows that $X^{\lbe\ast}\be(T\e)/Y$ is isomorphic to a subgroup of the finite
group $ H^{1}(I,Y)$. In particular, it is not free.
\end{proof}

What follows is an elaboration of \cite[Lemma 2.13]{xa}.

Let $T$ be any $K$-torus. The maximal quotient torus $T^{\le(m)}$ of $T$ having multiplicative reduction is
the $K$-torus with character module $X^{\lbe\ast}\be(T\e)^{I}$, and the maximal
subtorus $T_{(u)}$ of $T$ having unipotent reduction is the $K$-torus with
character module $X^{\lbe\ast}\be(T\e)/X^{\lbe\ast}\be(T\e)^{I}$. This follows from Lemmas \ref{-1-coh}(ii) and
\ref{unip-red} together with the fact that, if $Y$ is an $I$-submodule of $X^{\lbe\ast}\be(T\e)$ such that
$X^{\lbe\ast}\be(T\e)/Y$ is free and $(X^{\lbe\ast}\be(T\e)/Y\e)^{I}=0$, then $X^{\lbe\ast}\be(T\e)^{I}\subset Y$. Now the exact sequence of $\mathcal G$-modules $0\to X^{\lbe\ast}\be(T\e)^{I}\to X^{\lbe\ast}\be(T\e)\to X^{\lbe\ast}\be(T\e)/X^{\lbe\ast}\be(T\e)\textit{}^{I}\to 0$ induces an exact sequence of $K$-tori
\begin{equation}\label{umr}
0\to T_{(u)}\to T\to T^{(m)}\to 0.
\end{equation}

Recall now the minimal splitting field $L$ of $T$. The norm map
$N_{L/K}\colon L^{*}\to K^{*}$ induces an epimorphism
of $K$-tori
$R_{L/K}(T_{L})\to T$ whose kernel is denoted by $R^{(1)}_{L/K}(T_{L})$ and
called the {\it{norm one torus}} associated to $T$. See \cite[Theorem 0.4.4,
p.~16]{bra}. Thus there exist canonical exact sequences 
\begin{equation}\label{nt}
0\to R^{(1)}_{L/K}(T_{L})\to R_{L/K}(T_{L})\to T\to 0
\end{equation}
and
\begin{equation}\label{umr2}
0\to R^{(1)}_{L/K}(T_{L})_{(u)}\to R^{(1)}_{L/K}(T_{L})\to R^{(1)}_{L/K}(T_{L})^{(m)}\to 0,
\end{equation}
where the latter sequence is the sequence \eqref{umr} associated to the $K$-torus $R^{(1)}_{L/K}(T_{L})$.
Set
\begin{equation}\label{t1}
P=R^{(1)}_{L/K}(T_{L})^{(m)}
\end{equation}
and let $Q$ be the pushout of the canonical morphisms $R^{(1)}_{L/K}(T_{L})\hookrightarrow
R_{L/K}(T_{L})$ and $
R^{(1)}_{L/K}(T_{L})\twoheadrightarrow R^{(1)}_{L/K}(T_{L})^{(m)}=P$ appearing in  \eqref{nt} and \eqref{umr2}, respectively. Thus there exists a canonical exact commutative diagram
\[
\xymatrix{0\ar[r]&  R^{(1)}_{L/K}(T_{L})\ar@{->>}[d]\ar[r]&
R_{L/K}(T_{L})\ar@{->>}[d]\ar[r]& T\ar@{=}[d]\ar[r]& 0\\
0\ar[r]&P\ar[r]&Q\ar[r]&T\ar[r]& 0,
}
\]
where the top row is \eqref{nt}. By \eqref{umr2}, the kernel of the left-hand vertical map in the above diagram equals $R^{(1)}_{L/K}(T_{L})_{(u)}$, which immediately yields the formula
\begin{equation}\label{t2}
Q=R_{L/K}(T_{L})/R^{(1)}_{L/K}(T_{L})_{(u)}.
\end{equation}
Thus there exists a canonical exact sequence of $K$-tori
\begin{equation}\label{cres}
0\to P\to Q\to T\to 0,
\end{equation}
where $P$ and $Q$ are given by \eqref{t1} and \eqref{t2}, respectively. Since $P$ has multiplicative reduction, the following lemma is immediate from Corollary \ref{phi-ex}.
\begin{lemma}\label{cres1} The canonical resolution \eqref{cres} induces an exact sequence of $\g$-modules
\[
0\to \phi(P)\to \phi(Q)\to \phi(T\le)\to 0.
\]
\end{lemma}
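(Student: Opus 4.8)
The plan is to deduce the lemma as a direct application of Corollary \ref{phi-ex} to the canonical resolution \eqref{cres}. Since \eqref{cres} is an exact sequence of $K$-tori $0\to P\to Q\to T\to 0$, and since $\phi(-)$ is functorial on $K$-tori, it induces maps $\phi(P)\to\phi(Q)\to\phi(T\e)$; the only point that must be checked before invoking Corollary \ref{phi-ex} is that the kernel torus $P$ has multiplicative reduction.

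For this I would simply unwind the definition \eqref{t1}: $P=R^{(1)}_{L/K}(T_{L})^{(m)}$ is by construction the maximal quotient torus of the norm one torus $R^{(1)}_{L/K}(T_{L})$ having multiplicative reduction, so by the discussion preceding \eqref{umr} its character module is $X^{\ast}\be\big(R^{(1)}_{L/K}(T_{L})\big)^{\be I}$, on which the inertia group $I$ acts trivially. Hence $P$ splits over $\Knr$, which is precisely the statement that $P$ has multiplicative reduction in the sense introduced before \eqref{mris}.

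With this in hand, Corollary \ref{phi-ex} applies verbatim to \eqref{cres} with $T_{1}=P$, $T_{2}=Q$ and $T_{3}=T$, and yields the exact sequence of $\g$-modules
\[
0\to \phi(P)\to \phi(Q)\to \phi(T\e)\to 0.
\]
The main (indeed only) step requiring any argument is the verification that $P$ has multiplicative reduction, and this is essentially immediate from its definition; no genuine obstacle remains, since the substance is already contained in Corollary \ref{phi-ex} (and, behind it, in Proposition \ref{phi-ex0}, whose two hypotheses for $T_{1}=P$ hold because ${\rm R}^{1}j_{*}P=0$ for the smooth topology by \cite[Corollary 4.2.6, p.~82]{bra} and $\phi(P)\simeq X_{\ast}(P)$ is torsion-free by \eqref{mris}).
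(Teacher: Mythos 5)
Your proposal is correct and follows exactly the paper's own argument: the paper likewise notes that $P=R^{(1)}_{L/K}(T_{L})^{(m)}$ has multiplicative reduction by construction and then declares the lemma immediate from Corollary \ref{phi-ex}. Your additional unwinding of the hypotheses of Proposition \ref{phi-ex0} is accurate but not needed beyond what the corollary already packages.
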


\begin{lemma} \label{xseq} The canonical resolution \eqref{cres} induces an exact sequence of $\g$-modules \[
0\to X_{*}(P)\to X_{*}(Q)_{\lbe I}\to X_{*}(T)_{\lbe I}\to 0.\]
\end{lemma}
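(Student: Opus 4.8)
The plan is to apply the cocharacter functor to the canonical resolution \eqref{cres} and then pass to $I$-coinvariants, the only substantive point being injectivity on the left.

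First I would recall that, under the anti-equivalence between the category of $K$-tori and the category of $\Z$-free, finitely generated $\mathcal G$-modules, the exact sequence \eqref{cres} corresponds to a short exact sequence of $\mathcal G$-modules $0\to X^{*}(T)\to X^{*}(Q)\to X^{*}(P)\to 0$ with $\Z$-free terms. Being $\Z$-split (since $X^{*}(P)$ is $\Z$-free), this sequence remains exact after applying $(-)^{\vee}=\Hom_{\e\Z}(-,\Z)$, so by the identification $X_{*}(-)=X^{*}(-)^{\vee}$ we obtain a short exact sequence of $\mathcal G$-modules $0\to X_{*}(P)\to X_{*}(Q)\to X_{*}(T)\to 0$. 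Each of the tori $P$, $Q$, $T$ splits over $L$, so $I$ acts on each of these modules through the finite quotient $J$; in particular $\frak A_{\be I}X_{*}(Q)=\frak A_{\be J}X_{*}(Q)$, and since $P$ has multiplicative reduction, $I$ (hence $J$) acts trivially on $X^{*}(P)$ and therefore on $X_{*}(P)$, so that $X_{*}(P)_{\lbe I}=X_{*}(P)$ as $\g$-modules.

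Now, since $(-)_{\lbe I}$ is right exact and the maps above are $\mathcal G$-equivariant, there results an exact sequence of $\g$-modules $X_{*}(P)\to X_{*}(Q)_{\lbe I}\to X_{*}(T)_{\lbe I}\to 0$, and it remains only to check that $X_{*}(P)\to X_{*}(Q)_{\lbe I}$ is injective. Regarding $X_{*}(P)$ as a submodule of $X_{*}(Q)$, this map has kernel $X_{*}(P)\cap\frak A_{\be J}X_{*}(Q)$. If $x$ lies in this intersection then $Nx=0$ in $X_{*}(Q)$, because $\frak A_{\be J}X_{*}(Q)\subseteq{}_{N}X_{*}(Q)$; but $J$ acts trivially on $x\in X_{*}(P)$, so $Nx=|J\le|\,x$, whence $|J\le|\,x=0$ in $X_{*}(Q)$ and a fortiori in $X_{*}(P)$. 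As $X_{*}(P)$ is $\Z$-free, $x=0$. Combining this with the previous paragraph gives the desired short exact sequence $0\to X_{*}(P)\to X_{*}(Q)_{\lbe I}\to X_{*}(T)_{\lbe I}\to 0$ of $\g$-modules.

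The one place requiring an idea is the injectivity of $X_{*}(P)\to X_{*}(Q)_{\lbe I}$ — equivalently, the vanishing of the connecting homomorphism $H_{1}(J,X_{*}(T))\to X_{*}(P)_{\lbe I}$ in the homology sequence of $J$-coinvariants — which the norm computation above settles, using decisively that $P$ has multiplicative reduction, so that $X_{*}(P)$ is $\Z$-free with trivial $I$-action.
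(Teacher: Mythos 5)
Your proof is correct and follows essentially the same route as the paper: both pass to the short exact sequence of cocharacter modules, use right-exactness of $I$-coinvariants, and reduce to the injectivity of $X_{*}(P)\to X_{*}(Q)_{\lbe I}$. The paper settles that last point by observing that the connecting map $H_{1}(J,X_{*}(T))\to X_{*}(P)_{\lbe I}=X_{*}(P)$ goes from a torsion group to a torsion-free one, which is exactly what your explicit norm computation verifies.
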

\begin{proof} The $J$-homology sequence associated to the short exact sequence of $J$-modules
$0\to X_{*}(P)\to X_{*}(Q)\to X_{*}(T)\to 0$ corresponding to \eqref{cres} is
\[
\dots\to  H_{1}(J, X_{*}(Q))\to   H_{1}\lbe\big(J,
X_{*}(T))\to X_{*}(P)_{\lbe I}\to
X_{*}(Q)_{\lbe I}\to X_{*}(T)_{\lbe I}\to 0.
\]
Since $ H_{1}(J, X_{*}(T))$ is torsion and $X_{*}(P)_{\lbe I}=X_{*}(P)$ is torsion-free, the lemma follows.
\end{proof}

\begin{lemma}\label{xq} $ H^{1}(I,X^{*}\be(Q))=0$.
\end{lemma}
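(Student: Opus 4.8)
The plan is to use the presentation $Q=R_{L/K}(T_L)\big/R^{(1)}_{L/K}(T_L)_{(u)}$ from \eqref{t2}. Unwinding the pushout diagram preceding \eqref{cres}, the kernel of the quotient map $R_{L/K}(T_L)\twoheadrightarrow Q$ coincides with the kernel of $R^{(1)}_{L/K}(T_L)\twoheadrightarrow R^{(1)}_{L/K}(T_L)^{(m)}$, which by \eqref{umr2} is $R^{(1)}_{L/K}(T_L)_{(u)}$. Thus there is a short exact sequence of $K$-tori
\[
0\to R^{(1)}_{L/K}(T_L)_{(u)}\to R_{L/K}(T_L)\to Q\to 0.
\]
Applying the (exact, contravariant) character functor yields a short exact sequence of $\mathcal G$-modules
\[
0\to X^{*}(Q)\to X^{*}\big(R_{L/K}(T_L)\big)\to X^{*}\big(R^{(1)}_{L/K}(T_L)_{(u)}\big)\to 0,
\]
and I would read off $H^{1}(I,X^{*}(Q))$ from its long exact $I$-cohomology sequence.

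The two inputs are these. First, since $T_L$ is split of dimension $n=\dim T$, the character module $X^{*}(R_{L/K}(T_L))$ is free of rank $n$ over $\Z[\mathrm{Gal}(L/K)]$, hence free over $\Z[J]$ (decompose $\Z[\mathrm{Gal}(L/K)]$ along the right cosets of $J$); therefore $H^{r}(J,X^{*}(R_{L/K}(T_L)))=0$ for all $r\geq 1$, and then, as in the remark following \eqref{norm2}, also $H^{1}(I,X^{*}(R_{L/K}(T_L)))=0$. Second, $R^{(1)}_{L/K}(T_L)_{(u)}$ has unipotent reduction by its very construction in \eqref{umr2}, so Lemma \ref{unip-red} gives $X^{*}\big(R^{(1)}_{L/K}(T_L)_{(u)}\big)^{I}=0$.

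Feeding these into the long exact sequence, the relevant segment reads
\[
X^{*}\big(R^{(1)}_{L/K}(T_L)_{(u)}\big)^{I}\longrightarrow H^{1}(I,X^{*}(Q))\longrightarrow H^{1}\big(I,X^{*}(R_{L/K}(T_L))\big),
\]
and both outer terms vanish by the inputs above, forcing $H^{1}(I,X^{*}(Q))=0$. I do not foresee any genuine obstacle here: the only slightly delicate points are the standard identification of the character module of a Weil restriction of a split torus and the routine passage between $I$- and $J$-cohomology. Conceptually the statement is just that $Q$ is squeezed between a torus whose character module is $\Z[J]$-free — which kills $H^{1}$ — and a torus with unipotent reduction — which kills $H^{0}$.
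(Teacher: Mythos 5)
Your proof is correct and follows essentially the same route as the paper: both use the short exact sequence $0\to X^{*}(Q)\to X^{*}(R_{L/K}(T_L))\to X^{*}\big(R^{(1)}_{L/K}(T_L)_{(u)}\big)\to 0$ coming from \eqref{t2}, kill the left outer term via Lemma \ref{unip-red} and the right one via freeness of $\Z^{d}[\mathrm{Gal}(L/K)]$ over $\Z[J]$ (the paper phrases this as Shapiro's lemma). The only cosmetic difference is that the paper works directly with $J$-cohomology after identifying $H^1(I,-)$ with $H^1(J,-)$, exactly as you indicate.
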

\begin{proof} By \cite[Theorem 0.4.3, p.~14, and proof of Theorem 0.4.4, p.~16]{bra}, there exists a canonical isomorphism of $\mathcal G$-modules $X^{*}\be(R_{L/K}(T_{L}))=\Z^{d}[\mathrm{Gal}(L/K)]$, where $d$ is the dimension of $T$. Thus \eqref{t2} yields a canonical exact sequence of $\mathcal G$-modules
\[
0\to X^{*}\be(Q)\to  \Z^{d}[\mathrm{Gal}(L/K)]\to
X^{*}\be\big(R^{(1)}_{L/K}(T_{L})_{(u)}\big)\to 0.
\]
Now, since $X^{*}\be\big(R^{(1)}_{L/K}(T_{L})_{(u)}\big)^{\be J}=0$ by Lemma \ref{unip-red}, the $J$-cohomology sequence associated to the above short exact sequence yields an injection
\[
 H^{1}(I,X^{*}\be(Q))= H^{1}(J,X^{*}\be(Q))\e\hookrightarrow\e
 H^{1}(J,\Z^{d}[\mathrm{Gal}(L/K)]).
\]
Finally, since $\Z^{d}[\mathrm{Gal}(L/K)]$ is a free (right) $\Z^{d}[\e J\e]$-module of
finite rank, the latter cohomology group vanishes by Shapiro's lemma (see \cite[Lemma 6.3.2, p.~171]{we}), and this completes the proof.
\end{proof}

\section{Proof of Theorem \ref{phit}}\label{bit}

By Lemma \ref{-1-coh}(i), there exists a canonical exact sequence of $\g$-modules
\begin{equation}\label{iseq1}
0\longrightarrow   H^{1}(I,X^{\lbe\ast}\be(T\e))^{D}\longrightarrow  X_{\lbe\ast}\lbe(T\e)_{\lbe I}\overset{q_{\le T}}{\longrightarrow }
\big(X^{\lbe\ast}\be(T\e)^{I}\e\big)^{\!\vee}\longrightarrow  0.
\end{equation}
We will write $\underline{X}_{\le\ast}(T\e)_{\lbe I}$ and $\underline{X}^{*}\be(T\e)^{I}$, respectively, for the \'etale $k$-sheaves that correspond to the continuous $\g$-modules $X_{\lbe\ast}\lbe(T\e)_{\lbe I}$ and $X^{\lbe\ast}\be(T\e)^{I}$ (see \cite[Corollary II.2.2(i), p.~94]{t}).

\begin{lemma}\label{vtseq} There exists a canonical exact sequence of \'etale sheaves on $S$
\[
0\to\underline{\Hom}_{\e S}(\e j_{*}\underline{X}^{*}\be(T\e),\G_{m,S})\to   \s T\to i_{*}\big(\underline{X}^{*}\be(T\e)^{I}\e\big)^{\!\vee}\to 0.
\]
\end{lemma}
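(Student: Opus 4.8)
The plan is to realize the claimed sequence as the sheafification of a sequence of $\g$-modules which, over $\overline{k}$, reduces to \eqref{iseq1}. First I would unwind the three terms. For the left-hand term, recall from \eqref{tor} that $T=\underline{\Hom}_{\e K}(\underline{X}^{*}\be(T\e),\G_{m,K})$; applying $j_{*}$ and using that $j_{*}$ commutes with finite limits, one should get a canonical identification $\s T=j_{*}T=\underline{\Hom}_{\e S}(\e j_{*}\underline{X}^{*}\be(T\e),j_{*}\G_{m,K})$. The natural map $\G_{m,S}\to j_{*}\G_{m,K}$ then induces $\underline{\Hom}_{\e S}(\e j_{*}\underline{X}^{*}\be(T\e),\G_{m,S})\to\s T$, and I would argue this is the map to be shown injective with the stated cokernel. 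The right-hand term $i_{*}\big(\underline{X}^{*}\be(T\e)^{I}\big)^{\vee}$ is the skyscraper at the closed point of the $k$-sheaf corresponding to $\big(X^{*}\be(T\e)^{I}\big)^{\vee}=X_{*}(T^{(m)})$, i.e. it is $i_{*}\phi(T^{(m)})$ by \eqref{mris} applied to the torus $T^{(m)}$ of \eqref{umr}, which has multiplicative reduction.

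With this reformulation the strategy is to compare \eqref{vtseq} with the defining sequence \eqref{pseq} of $\phi(T\e)$ through the canonical resolution $0\to T_{(u)}\to T\to T^{(m)}\to 0$ of \eqref{umr}. The key step is: the sequence $0\to\s T_{(u)}\to\s T\to i_{*}\phi(T^{(m)})\to 0$ is exact on the étale site over $S$, where $\s T_{(u)}:=\underline{\Hom}_{\e S}(\e j_{*}\underline{X}^{*}\be(T\e),\G_{m,S})$ and the right-hand map is the composite $\s T\twoheadrightarrow\s T^{(m)}\twoheadrightarrow i_{*}\phi(T^{(m)})$ coming from \eqref{pseq} for $T^{(m)}$. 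Exactness can be checked stalk by stalk: at the generic point it is the $K^{\sep}$-points of $0\to T_{(u)}\to T\to T^{(m)}\to 0$, which is exact since these are split tori over the algebraically closed-in-the-relevant-sense field; at the closed point it follows from the snake lemma comparing the Néron model sequences of $T_{(u)}$, $T$, $T^{(m)}$ together with the facts, established earlier, that $R^{1}j_{*}$ of a torus with multiplicative reduction vanishes for the smooth topology (Corollary \ref{phi-ex}/\cite[Corollary 4.2.6]{bra}) and that $\phi(T_{(u)})$ is torsion while the target here is torsion-free. Concretely, $T_{(u)}$ has unipotent reduction, so $\s T_{(u)}=\s T_{(u)}^{0}$ has no component group, forcing the map $\s T/\s T^{0}\to i_{*}\phi(T^{(m)})$ to be an isomorphism, so that $\ker[\s T\to i_{*}\phi(T^{(m)})]=\ker[\s T\to\s T/\s T^{0}\oplus\dots]$ identifies with $\s T_{(u)}$.

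It then remains to identify $\ker[\s T\to i_{*}\phi(T^{(m)})]$ with $\underline{\Hom}_{\e S}(\e j_{*}\underline{X}^{*}\be(T\e),\G_{m,S})$. For this I would take $j_{*}$ of the sequence of sheaves $0\to\underline{X}^{*}\be(T\e)^{I}\be|_{K}\hookrightarrow\underline{X}^{*}\be(T\e)\to\underline{X}^{*}\be(T\e)/\underline{X}^{*}\be(T\e)^{I}\to 0$ (viewed over $K$), apply $\underline{\Hom}_{\e S}(-,\G_{m,S})$ after pushing forward, and chase: the torus $T_{(u)}$ has character sheaf $\underline{X}^{*}\be(T\e)/\underline{X}^{*}\be(T\e)^{I}$ by the discussion preceding \eqref{umr}, and one must show $\underline{\Hom}_{\e S}(\e j_{*}\underline{X}^{*}\be(T\e),\G_{m,S})$ receives $\s T_{(u)}$ compatibly. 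The main obstacle, I expect, is precisely this last identification — verifying that forming $j_{*}$ of the character sheaves and then $\underline{\Hom}(-,\G_{m,S})$ is compatible with the Néron model functor $j_{*}(-)$ on tori, i.e. that the natural comparison map $\underline{\Hom}_{\e S}(\e j_{*}\underline{X}^{*}\be(T\e),\G_{m,S})\to j_{*}\underline{\Hom}_{\e K}(\underline{X}^{*}\be(T\e),\G_{m,K})=\s T$ has image exactly $\ker[\s T\to i_{*}\phi(T^{(m)})]$ and not something larger. This is where the hypothesis that $k$ is perfect and the structure theory of Néron models of tori (via \cite{xa}, \cite{bra}) will have to be invoked carefully; everything else is diagram chasing and sheafification, and the exactness over $\overline{k}$ will by construction recover \eqref{iseq1} after taking $\big(\underline{X}^{*}\be(T\e)^{I}\big)^{\vee}=i^{*}$ of the skyscraper term.
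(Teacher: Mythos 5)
Your route is genuinely different from the paper's and, as written, it both contains errors and leaves the decisive step unproved. The paper's proof is a single homological stroke: apply $\underline{\Hom}_{S}(\e j_{*}\underline{X}^{*}\be(T\e),-)$ to the valuation sequence \eqref{pseq2} and invoke the vanishing $\underline{\Ext}^{1}_{\e S_{\mathrm{\acute et}}}(\e j_{*}\underline{X}^{*}\be(T\e),\G_{m,S})=0$ of \cite[Theorem B.3]{bra} to obtain right-exactness; the middle and right-hand terms are then identified by adjunction exactly as in \eqref{id1} and \eqref{id2}. You do recover those two identifications, but you never invoke any $\underline{\Ext}^{1}$-vanishing, and the substitute argument you sketch does not close the gap.

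Concretely: (1) your generic-stalk computation is wrong --- the generic stalk of $\underline{\Hom}_{S}(\e j_{*}\underline{X}^{*}\be(T\e),\G_{m,S})$ is $\Hom(X^{\ast}\be(T\e),(K^{\sep})^{*})=T(K^{\sep})$, not $T_{(u)}(K^{\sep})$, and the generic stalk of the skyscraper term is $0$, not $T^{(m)}(K^{\sep})$; (2) the assertion that unipotent reduction of $T_{(u)}$ forces $\s T_{(u)}=\s T_{(u)}^{\e 0}$ is false: unipotent reduction means only that $\phi(T_{(u)})$ is \emph{finite}, namely $H^{1}(I,X^{\ast}\be(T_{(u)}))^{D}$ (see \cite[Corollary 2.18]{xa}, restated as Remark \ref{fin}), and this is already nonzero for the norm-one torus of a ramified quadratic extension. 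Consequently the map $\s T\lbe/\s T^{\e 0}\to i_{*}\phi(T^{(m)})$ is not an isomorphism in general --- its kernel is $H^{1}(I,X^{\ast}\be(T\e))^{D}$, cf.\ Corollary \ref{recov} --- so your snake-lemma identification of the kernel collapses; (3) the step you yourself flag as ``the main obstacle,'' namely that the comparison map $\underline{\Hom}_{S}(\e j_{*}\underline{X}^{*}\be(T\e),\G_{m,S})\to\s T$ lands isomorphically onto the kernel of your surjection, is essentially the entire content of the lemma, and deferring it to ``the structure theory of N\'eron models, invoked carefully'' is not a proof. The missing idea is precisely Brahm's $\underline{\Ext}^{1}$-vanishing, which yields the exactness of the Hom-sequence in one step and makes the detour through $0\to T_{(u)}\to T\to T^{(m)}\to 0$ unnecessary.
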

\begin{proof} Since $\underline{\Ext}^{1}_{\e S_{\mr{\acute{e}t}}}\!(\e j_{*}\underline{X}^{*}\be(T\e),\G_{m,S})=0$ by \cite[Theorem B.3, p.~131]{bra}, \eqref{pseq2} induces an exact sequence of \'etale sheaves on $S$
\[
0\to\underline{\Hom}_{\e S}(\e j_{*}\underline{X}^{*}\be(T\e),\G_{m,S})\to \underline{\Hom}_{\e S}(\e j_{*}\underline{X}^{*}\be(T\e),j_{*}\G_{m,K})\to \underline{\Hom}_{\e S}(\e j_{*}\underline{X}^{*}\be(T\e),i_{*}\Z_{\e k})\to 0.
\]
Now, since $\underline{X}^{*}\be(T\e)=j^{*}j_{*}\underline{X}^{*}\be(T\e)$ by \cite[Proposition II.8.1.1, p.~134]{t} and $i^{*}j_{*}\underline{X}^{*}\be(T\e)=\underline{X}^{*}\be(T\e)^{I}$ by \cite[Example II.3.12, p.~75]{mi1}, \eqref{tor} and \cite[Exercise II.3.22(a), p.~80]{mi1} yield canonical isomorphisms of \'etale sheaves on $S$
\begin{equation}\label{id1}
\underline{\Hom}_{\e S}(\e j_{*}\underline{X}^{*}\be(T\e),j_{*}\G_{m,K})=j_{*}\underline{\Hom}_{\e K}(\e j^{*}j_{*}\underline{X}^{*}\be(T\e),\G_{m,K})=j_{*}T=\s T
\end{equation}
and
\begin{equation}\label{id2}
\underline{\Hom}_{\e S}(\e j_{*}\underline{X}^{*}\be(T\e),i_{*}\Z_{\e k})=i_{*}\underline{\Hom}_{\e k}(i^{*}j_{*}\underline{X}^{*}\be(T\e), \e\Z_{\e k})=i_{*}\big(\underline{X}^{*}\be(T\e)^{I}\e\big)^{\!\vee}.
\end{equation}
The lemma is now clear.
\end{proof}

\begin{remark}\label{xrem} The same argument that proves  \eqref{id1} yields a canonical isomorphism of \'etale sheaves on $S$
\[
j_{*}\le\underline{X}_{*}(T\e)=
\underline{\Hom}_{\e S}(\e j_{*}\underline{X}^{*}\be(T\e),j_{*}\Z_{K}).
\]
\end{remark}

Let
\begin{equation}\label{vmor}
v_{\le T}\colon \s T\to i_{*}\big(\underline{X}^{*}\be(T\e)^{I}\e\big)^{\!\vee}
\end{equation}
be the epimorphism of \'etale sheaves which appears in the exact sequence of Lemma \ref{vtseq}. If $T=\G_{m,K}$, then $v_{\le T}=v_{\le \G_{m,K}}\colon j_{*}\G_{m,K}\to i_{*}\Z_{\e k}$ is the morphism appearing in the exact sequence \eqref{pseq2}. For arbitrary $T$, and via the identifications \eqref{id1} and \eqref{id2},
$v_{\le T}$ is the morphism
\[
\underline{\Hom}_{\e S}(\e j_{*}\underline{X}^{*}\be(T\e),v_{\le \G_{m,K}})\colon \underline{\Hom}_{\e S}(\e j_{*}\underline{X}^{*}\be(T\e),j_{*}\G_{m,K})\to \underline{\Hom}_{\e S}(\e j_{*}\underline{X}^{*}\be(T\e),i_{*}\Z_{\e k}).
\]

Now, since $\big(\underline{X}^{*}\be(T\e)^{I}\e\big)^{\!\vee}=i^{*}i_{*}\big(\underline{X}^{*}\be(T\e)^{I}\e\big)^{\!\vee}$ by \cite[Proposition II.8.1.1, p.~134]{t}, 
the exact sequence of Lemma \ref{vtseq} induces an exact sequence of \'etale $k$-sheaves
\[
0\longrightarrow i^{*}\underline{\Hom}_{\e S}(\e j_{*}\underline{X}^{*}\be(T\e),\G_{m,S})\longrightarrow i^{*}\s T\overset{i^{\be *}\be v_{T}}{\longrightarrow } \big(\underline{X}^{*}\lbe(T\e)^{I}\e\big)^{\!\vee}\longrightarrow 0.
\]
Since \eqref{ff} is an equivalence, $i^{*}\lbe v_{\le T}\colon i^{*}\s T\to \big(\underline{X}^{*}\be(T\e)^{I}\e\big)^{\!\vee}$ corresponds to a homomorphism of $k$-group schemes $\s T_{\lbe s}\to E$, where $E$ is the \'etale $k$-group scheme that represents $\big(\underline{X}^{*}\be(T\e)^{I}\e\big)^{\!\vee}$ (see \cite[Proposition II.9.2.3, p.~153]{t}). By diagram \eqref{diag1}, the latter homomorphism factors (uniquely) through a homomorphism of $k$-group schemes $\pi_{0}(\s T_{\lbe s})\to E$. Thus there exists a commutative diagram of \'etale $k$-sheaves
\begin{equation}\label{diag2}
\xymatrix{i^{*}\s T\ar@{->>}[r]\ar@{->>}[dr]_(0.4){i^{*}\! v_{\le T}}& \phi(T\e)\ar@{->>}[d]\\
&\big(\underline{X}^{*}\be(T\e)^{I}\e\big)^{\!\vee}.&
}
\end{equation}
Let
\begin{equation}\label{chom}
\alpha_{\e T}\colon \phi(T\e)\twoheadrightarrow \big(X^{\lbe\ast}\be(T\e)^{I}\e\big)^{\!\vee}
\end{equation}
be the epimorphism of $\g$-modules which corresponds to the vertical morphism in \eqref{diag2}. 

\begin{proposition} The map 
\[
\alpha_{\e T}^{\vee}\colon X^{\lbe\ast}\be(T\e)^{I}\hookrightarrow\phi(T\e)^{\vee}
\]
induced by \eqref{chom} is an isomorphism of $\g$-modules.
\end{proposition}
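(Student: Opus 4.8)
The plan is to verify that $\alpha_{\e T}^{\vee}$ is an isomorphism by identifying its cokernel and kernel with groups that the machinery of the preceding sections shows to be zero. Recall that $\phi(T\e)$ is a finitely generated $\g$-module, so $\phi(T\e)^{\vee}$ is $\Z$-free; since $X^{\lbe\ast}\be(T\e)^{I}$ is also $\Z$-free, it suffices to show that $\alpha_{\e T}^{\vee}$ is injective with torsion-free cokernel, or alternatively that it becomes an isomorphism after a faithfully flat base change that detects injectivity and surjectivity of maps of free modules — for instance, to show both that $\alpha_{\e T}$ is surjective (which we already know from \eqref{chom}) and that $\krn\alpha_{\e T}$ is a finite $\g$-module, whence $\alpha_{\e T}^{\vee}$ is injective with torsion-free cokernel equal to $(\krn\alpha_{\e T})^{\vee}=0$.

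First I would show that $\krn\alpha_{\e T}$ is finite. The natural candidate for $\krn\alpha_{\e T}$ is the image of $i^{*}\underline{\Hom}_{\e S}(\e j_{*}\underline{X}^{*}\be(T\e),\G_{m,S})$ in $\phi(T\e)$ under the composite $i^{*}\s T\twoheadrightarrow\phi(T\e)$, coming from diagram \eqref{diag2} together with the exact sequence of Lemma \ref{vtseq}. Writing $\s T_{(m)}^{\e 0}$ or more precisely the subsheaf $\underline{\Hom}_{\e S}(\e j_{*}\underline{X}^{*}\be(T\e),\G_{m,S})$, which should be identified with (an open subgroup of) $\s T^{\e 0}$ or at least a smooth connected $S$-group, its special fiber is connected, so its image in the étale quotient $\phi(T\e)=\pi_{0}(\s T_{\lbe s})$ is a quotient of a connected group scheme mapping to an étale one, hence can only be non-finite if it is trivial. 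More carefully: the snake lemma applied to \eqref{pseq} versus the sequence of Lemma \ref{vtseq} gives $\krn\alpha_{\e T}$ as a quotient of $i^{*}\lbe\big(\s T^{\e 0}/\underline{\Hom}_{\e S}(\e j_{*}\underline{X}^{*}\be(T\e),\G_{m,S})\big)$ — and the point is that this last sheaf is represented by a connected $k$-group scheme, so it maps trivially into the étale $k$-group scheme $\phi(T\e)$. This forces $\krn\alpha_{\e T}$ to be a quotient of a connected group, but it is also a subgroup of the étale, finitely generated $\g$-module $\phi(T\e)$; being simultaneously étale and a quotient of a connected scheme, it is finite (indeed, trivial if one is careful, but finiteness is all we need).

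Given that $\krn\alpha_{\e T}$ is finite, the short exact sequence $0\to\krn\alpha_{\e T}\to\phi(T\e)\xrightarrow{\alpha_{\e T}}\big(X^{\lbe\ast}\be(T\e)^{I}\e\big)^{\!\vee}\to 0$ of $\g$-modules, upon applying $(-)^{\vee}=\Hom_{\Z}(-,\Z)$, gives $0\to\big(X^{\lbe\ast}\be(T\e)^{I}\e\big)^{\!\vee\vee}\to\phi(T\e)^{\vee}\to(\krn\alpha_{\e T})^{\vee}$; since $X^{\lbe\ast}\be(T\e)^{I}$ is $\Z$-free we have $\big(X^{\lbe\ast}\be(T\e)^{I}\e\big)^{\!\vee\vee}=X^{\lbe\ast}\be(T\e)^{I}$ by \eqref{eq2}, and $(\krn\alpha_{\e T})^{\vee}=0$ because $\krn\alpha_{\e T}$ is torsion. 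This yields exactly the claimed isomorphism $\alpha_{\e T}^{\vee}\colon X^{\lbe\ast}\be(T\e)^{I}\xrightarrow{\sim}\phi(T\e)^{\vee}$ of $\g$-modules, the $\g$-equivariance being automatic since every map in sight was a map of $\g$-modules (equivalently, of étale $k$-sheaves).

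The main obstacle will be the first step: pinning down precisely what the subsheaf $\underline{\Hom}_{\e S}(\e j_{*}\underline{X}^{*}\be(T\e),\G_{m,S})$ of $\s T$ is and why its special fiber is connected (or at least why its image in $\pi_{0}(\s T_{\lbe s})$ is finite). One clean way around this is to reduce to the case $T=\G_{m,K}$ via the canonical resolution \eqref{cres}, or directly: $\underline{\Hom}_{\e S}(\e j_{*}\underline{X}^{*}\be(T\e),\G_{m,S})$ is built out of copies of $\G_{m,S}$ by kernels and finite limits dictated by a presentation of the étale sheaf $j_{*}\underline{X}^{*}\be(T\e)$, and $\G_{m,S}$ has connected special fiber; a finite limit of $S$-tori has an identity component whose special fiber is connected, and one checks the map to $\s T$ factors through $\s T^{\e 0}$ on special fibers up to finite index. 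Alternatively, and perhaps most efficiently, one can avoid the geometry entirely: compare \eqref{iseq1} with the sequence obtained by applying $i^{*}$ to Lemma \ref{vtseq} and taking $\kbar$-points, and observe that both $q_{\le T}$ and $\alpha_{\e T}$ are epimorphisms onto $\big(X^{\lbe\ast}\be(T\e)^{I}\e\big)^{\!\vee}$ whose kernels are, respectively, the torsion group $H^{1}(I,X^{\lbe\ast}\be(T\e))^{D}$ and the finite group analyzed above; taking duals kills both kernels and identifies $\alpha_{\e T}^{\vee}$ with $q_{\le T}^{\vee}$, which is the isomorphism $\big(X^{\lbe\ast}\be(T\e)^{I}\e\big)^{\!\vee\vee}=X^{\lbe\ast}\be(T\e)^{I}$.
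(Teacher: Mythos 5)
Your second step---dualizing the exact sequence $0\to\krn\alpha_{\e T}\to\phi(T\e)\to\big(X^{\lbe\ast}\be(T\e)^{I}\e\big)^{\!\vee}\to 0$ and using \eqref{eq2} together with $(\krn\alpha_{\e T})^{\vee}=0$---is correct \emph{provided} $\krn\alpha_{\e T}$ is finite. But that finiteness is exactly where all the content of the proposition lies (it is equivalent to the rank equality $\mathrm{rank}_{\e\Z}\,\phi(T\e)=\mathrm{rank}_{\e\Z}\,X^{\lbe\ast}\be(T\e)^{I}$), and your argument for it does not work. The paper does not attempt to prove this either: its proof consists of citing \cite[Theorem 5.1.6]{bra}, which exhibits $\alpha_{\e T}^{\vee}$ as injective with an explicitly described cokernel $E(T)$, and \cite[Theorem 5.3.8]{bra}, which shows $E(T)=0$ when $k$ is perfect. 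That rank/cokernel computation is the nontrivial imported input.

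Concretely, your connectedness argument fails on two counts. First, the containment is reversed: since $\s T^{\e 0}_{\lbe s}$ is connected and $\big(\underline{X}^{*}\be(T\e)^{I}\e\big)^{\!\vee}$ is \'etale, one gets $\s T^{\e 0}\subseteq\krn v_{\le T}=\underline{\Hom}_{\e S}(\e j_{*}\underline{X}^{*}\be(T\e),\G_{m,S})$, so $\krn\alpha_{\e T}=i^{*}\lbe\big(\krn v_{\le T}/\s T^{\e 0}\big)$; it is not a quotient of $i^{*}\lbe\big(\s T^{\e 0}/\krn v_{\le T}\big)$. Second, and fatally, $i^{*}\krn v_{\le T}$ is \emph{not} connected in general, and the "connected maps trivially into \'etale" mechanism you invoke would yield not finiteness but triviality of $\krn\alpha_{\e T}$, i.e.\ that $\phi(T\e)$ is always torsion-free. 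That is false: by Corollary \ref{recov} and Remark \ref{fin}, $\phi(T\e)_{\rm{tors}}\simeq H^{1}(I,X^{\lbe\ast}\be(T\e))^{D}$, which is a nontrivial finite group already for the norm-one torus of a ramified quadratic extension (there $X^{\lbe\ast}\be(T\e)^{I}=0$ and $\krn\alpha_{\e T}$ is all of $\phi(T\e)\simeq\Z/2$). Your fallback of "identifying $\alpha_{\e T}^{\vee}$ with $q_{\le T}^{\vee}$" is circular, since any identification of $\phi(T\e)^{\vee}$ with $\big(X_{\lbe\ast}\lbe(T\e)_{\lbe I}\big)^{\vee}$ presupposes the theorem being proved. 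To close the gap you must actually establish that $\krn v_{\le T}/\s T^{\e 0}$ is finite, which is the substance of the cited results of Brahm (or of Xarles's computation of $\phi(T\e)$).
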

\begin{proof} See \cite[Theorem 5.1.6, p.~93]{bra} and note that the $\g$-module $E(T)=\mathrm{Coker}\e\, \alpha_{\e T}^{\vee}$ appearing there vanishes if $k$ is perfect by \cite[Theorem 5.3.8, p.~104]{bra}. 
\end{proof}

\begin{corollary}\label{iso2} The map \eqref{chom} induces an isomorphism of $\g$-modules 
\[
\phi(T\e)/\phi(T\e)_{\e\rm{tors}}\overset{\!\sim}{\to} \big(X^{\lbe\ast}\be(T\e)^{I}\e\big)^{\!\vee}.
\]
\end{corollary}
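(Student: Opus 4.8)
The plan is to obtain Corollary~\ref{iso2} as a purely formal consequence of the preceding Proposition. First I would note that the target $\big(X^{\lbe\ast}\be(T\e)^{I}\e\big)^{\!\vee}$ is $\Z$-free and finitely generated, being the linear dual of the finitely generated abelian group $X^{\lbe\ast}\be(T\e)^{I}$. Consequently the epimorphism $\alpha_{\e T}$ of \eqref{chom} annihilates $\phi(T\e)_{\e\rm{tors}}$ and factors through a surjection of $\g$-modules
\[
\bar\alpha_{\e T}\colon \phi(T\e)/\phi(T\e)_{\e\rm{tors}}\twoheadrightarrow \big(X^{\lbe\ast}\be(T\e)^{I}\e\big)^{\!\vee}.
\]
By \eqref{eq2} the source here is $\Z$-free, so both sides of $\bar\alpha_{\e T}$ are $\Z$-free finitely generated $\g$-modules, and it remains only to prove that $\bar\alpha_{\e T}$ is injective.

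For this I would compare $\Z$-ranks. Applying $-\otimes_{\Z}\Q$ to the isomorphism $\alpha_{\e T}^{\vee}\colon X^{\lbe\ast}\be(T\e)^{I}\overset{\!\sim}{\to}\phi(T\e)^{\vee}$ of the Proposition, together with the fact that the linear dual of a finitely generated abelian group has the same $\Z$-rank as its free part, one gets
$\mathrm{rk}\,\big(X^{\lbe\ast}\be(T\e)^{I}\e\big)^{\!\vee}=\mathrm{rk}\,X^{\lbe\ast}\be(T\e)^{I}=\mathrm{rk}\,\phi(T\e)^{\vee}=\mathrm{rk}\,\phi(T\e)=\mathrm{rk}\,\big(\phi(T\e)/\phi(T\e)_{\e\rm{tors}}\big)$,
so the source and target of $\bar\alpha_{\e T}$ have the same finite $\Z$-rank. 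A surjective homomorphism between two free abelian groups of the same finite rank is necessarily injective: it splits, and the kernel is then a finitely generated torsion submodule of a torsion-free group, hence zero. Therefore $\bar\alpha_{\e T}$ is an isomorphism of $\g$-modules, which is the assertion.

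Alternatively, and perhaps more in the spirit of the rest of the section, one may dualize the isomorphism $\alpha_{\e T}^{\vee}$ a second time: since $X^{\lbe\ast}\be(T\e)^{I}$ and $\phi(T\e)^{\vee}$ are both $\Z$-free, $\alpha_{\e T}^{\vee\vee}\colon\phi(T\e)^{\vee\vee}\to\big(X^{\lbe\ast}\be(T\e)^{I}\e\big)^{\!\vee}$ is again an isomorphism, its source is $\phi(T\e)/\phi(T\e)_{\e\rm{tors}}$ by \eqref{eq2}, and naturality of the evaluation maps identifies $\alpha_{\e T}^{\vee\vee}$ with $\bar\alpha_{\e T}$. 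I do not expect any genuine obstacle in either route; the only point that needs a line of care is the bookkeeping (matching $\bar\alpha_{\e T}$ with $\alpha_{\e T}^{\vee\vee}$, respectively carrying out the rank count via the Proposition), which is routine.
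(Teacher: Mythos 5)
Your proof is correct, and your second ("alternative") route—double-dualizing $\alpha_{\e T}^{\vee}$ and invoking \eqref{eq2} to identify $\phi(T\e)^{\vee\vee}$ with $\phi(T\e)/\phi(T\e)_{\e\rm{tors}}$—is exactly the paper's one-line argument. The first rank-counting variant is a harmless equivalent, so there is nothing to add.
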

\begin{proof}
This is immediate from the proposition and \eqref{eq2}.
\end{proof}

\begin{lemma}\label{tors-free} If $\e  H^{1}(I,X^{\lbe\ast}\be(T\e))=0$, then there exists a canonical isomorphism of $\g$-modules
\[
\beta_{\e T}\colon \phi(T\e)\overset{\!\sim}{\to}X_{\lbe\ast}\lbe(T\e)_{\lbe I}\e.
\]
\end{lemma}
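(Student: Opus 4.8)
The plan is to reduce the lemma to the single fact that $\phi(T\e)$ is torsion-free under the stated hypothesis, and then to read $\beta_{\e T}$ off the exact sequences already at hand. First, since $ H^{1}(I,X^{\lbe\ast}\be(T\e))=0$ we have $ H^{1}(I,X^{\lbe\ast}\be(T\e))^{D}=0$, so \eqref{iseq1} exhibits $q_{\le T}\colon X_{\lbe\ast}\lbe(T\e)_{\lbe I}\overset{\!\sim}{\to}\big(X^{\lbe\ast}\be(T\e)^{I}\e\big)^{\!\vee}$ as an isomorphism of $\g$-modules; in particular $X_{\lbe\ast}\lbe(T\e)_{\lbe I}$ is torsion-free. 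On the other hand, as $\big(X^{\lbe\ast}\be(T\e)^{I}\e\big)^{\!\vee}$ is torsion-free, Corollary \ref{iso2} implies that the epimorphism $\alpha_{\e T}$ of \eqref{chom} factors through $\phi(T\e)/\phi(T\e)_{\e\rm{tors}}$ and has kernel exactly $\phi(T\e)_{\e\rm{tors}}$. Hence, granting that $\phi(T\e)_{\e\rm{tors}}=0$, the map $\alpha_{\e T}$ is an isomorphism and $\beta_{\e T}:=q_{\le T}^{-1}\circ\alpha_{\e T}\colon \phi(T\e)\overset{\!\sim}{\to}X_{\lbe\ast}\lbe(T\e)_{\lbe I}$ is the required isomorphism of $\g$-modules, its canonicity being inherited from that of $q_{\le T}$ and $\alpha_{\e T}$.

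It remains to show that $ H^{1}(I,X^{\lbe\ast}\be(T\e))=0$ forces $\phi(T\e)_{\e\rm{tors}}=0$. The cleanest route is to invoke the explicit description of the group of components in \cite{xa}, which identifies $\phi(T\e)_{\e\rm{tors}}$, as a $\g$-module, with $ H^{1}(I,X^{\lbe\ast}\be(T\e))^{D}$ --- equivalently, via \eqref{dual}, with $\widehat{ H}^{-1}(J,X_{\lbe\ast}\lbe(T\e))$ --- and hence with the zero module under the present hypothesis. (As a consistency check, if $T$ has multiplicative reduction then $I$ acts trivially on the torsion-free module $X^{\lbe\ast}\be(T\e)$, so the hypothesis holds, and indeed $\phi(T\e)\simeq X_{\lbe\ast}\lbe(T\e)$ is torsion-free by \eqref{mris}.) Should one prefer to avoid \cite{xa}, one can argue geometrically: let $\s N:=\underline{\Hom}_{\e S}(\e j_{*}\underline{X}^{*}\be(T\e),\G_{m,S})$ be the kernel of $v_{\le T}$ in Lemma \ref{vtseq}. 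Since $\s T^{\e 0}$ has connected fibres, $v_{\le T}$ vanishes on it, so $\s T^{\e 0}\subseteq\s N$; combining Lemma \ref{vtseq} with \eqref{pseq} then gives a short exact sequence of \'etale sheaves $0\to\s T^{\e 0}\to\s N\to i_{*}\phi(T\e)_{\e\rm{tors}}\to 0$, and applying the exact functor $i^{*}$ yields $\pi_{0}(i^{*}\s N)=\phi(T\e)_{\e\rm{tors}}$. One is thus reduced to showing that the special fibre of the $S$-group scheme $\s N$ is connected, which in turn follows by controlling the difference between $i^{*}\s N$ and the $k$-torus $\underline{\Hom}_{\e k}(\e\underline{X}^{*}\be(T\e)^{I},\G_{m,k})$ through the local cohomology of $\G_{m,S}$ along the closed point of $S$.

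Everything in the first paragraph is formal manipulation of \eqref{iseq1}, \eqref{pseq}, Lemma \ref{vtseq} and Corollary \ref{iso2}; the genuine content, and the only step where the hypothesis on $X^{\lbe\ast}\be(T\e)$ is actually used, is the vanishing $\phi(T\e)_{\e\rm{tors}}=0$ --- an arithmetic statement about the N\'eron model of $T$. That is the step I expect to be the main obstacle, and the one for which it is cleanest to quote Xarles's computation rather than reprove it.
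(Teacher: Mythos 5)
Your proof is correct and follows essentially the same route as the paper: the paper likewise reduces everything to the torsion-freeness of $\phi(T)$ under the hypothesis $H^{1}(I,X^{*}(T))=0$ (quoting \cite[Proposition 2.7]{xa} for this, where you quote Xarles's identification of $\phi(T)_{\mathrm{tors}}$ with $H^{1}(I,X^{*}(T))^{D}$ --- the same source, essentially the same fact), and then defines $\beta_{T}:=q_{T}^{-1}\circ\alpha_{T}$ exactly as you do. The alternative geometric argument sketched in your second paragraph is not needed and is not pursued in the paper.
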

\begin{proof} By \cite[Proposition 2.7]{xa}, $\phi(T\e)$ is torsion free. Thus, by Corollary \eqref{iso2}, $\alpha_{\e T}\colon \phi(T\e)\to\big(X^{\lbe\ast}\be(T\e)^{I}\e\big)^{\!\vee}$ is an isomorphism of $\g$-modules. On the other hand, by \eqref{iseq1}, $q_{\e T}\colon X_{\lbe\ast}\lbe(T\e)_{\lbe I}\to \big(X^{\lbe\ast}\be(T\e)^{I}\e\big)^{\!\vee}$ is an isomorphism  of $\g$-modules. Thus $\beta_{\e T}:=q_{\e T}^{-1}\circ\alpha_{\e T}\colon \phi(T\e) \to X_{\lbe\ast}\lbe(T\e)_{\lbe I}$ is the required isomorphism of $\g$-modules. 
\end{proof}

We note that, if $T$ has multiplicative reduction (i.e., $I$ acts trivially on $X^{\lbe\ast}\be(T\e)$), then the isomorphism of the lemma is the isomorphism \eqref{mris}. Further, since the isomorphism of the previous lemma is {\it canonical} (i.e., functorial in $T\e$), given a morphism of $K$-tori $T_{1}\to T_{2}$ such that $ H^{1}(I,X^{\lbe\ast}\be(T_{1}\e))= H^{1}(I,X^{\lbe\ast}\be(T_{2}\e))=0$, the induced diagram
\begin{equation}\label{pt2}
\xymatrix{\phi(T_{1}\e)\ar[d]^{\beta_{\e T_{1}}}\ar[r]&
\phi(T_{2}\e)\ar[d]^{\beta_{\e T_{2}}}\\
X_{\lbe\ast}\lbe(T_{1}\e)_{\lbe I}\ar[r]&X_{\lbe\ast}\lbe(T_{2}\e)_{\lbe I}
}
\end{equation}
commutes.

Now recall the canonical resolution \eqref{cres} 
\[
0\to P\to Q\to T\to 0,
\]
where $P$ and $Q$ are given by \eqref{t1} and \eqref{t2}, respectively. Since $P$ has multiplicative reduction, we have $ H^{1}(I,X^{\lbe\ast}\be(P\e))=0$. Further, $ H^{1}(I,X^{\lbe\ast}\be(Q\e))=0$ by Lemma \ref{xq}. Thus, by \eqref{pt2} and Lemmas \ref{cres1}, \ref{xseq} and \ref{tors-free}, there exists a canonical exact commutative diagram
\begin{equation}\label{pt3}
\xymatrix{0\ar[r]& \phi(P\e)\ar[d]^(.45){\beta_{P}}_(.45){\simeq}\ar[r]&
\phi(Q\e)\ar[d]^(.45){\beta_{Q}}_(.45){\simeq}\ar[r]& \phi(T\e)\ar[r]& 0\\
0\ar[r]& X_{\lbe\ast}\lbe(P\e)_{\lbe I}\ar[r]&X_{\lbe\ast}\lbe(Q\e)_{\lbe I}\ar[r]&X_{\lbe\ast}\lbe(T\e)_{\lbe I}\ar[r]& 0.}
\end{equation}
It is now clear that there exists a unique isomorphism of $\g$-modules $\beta_{\e T}\colon \phi(T\e)\overset{\!\sim}{\to}X_{\lbe\ast}\lbe(T\e)_{\lbe I}$ such that the following diagram, derived from \eqref{pt3},
\[
\xymatrix{0\ar[r]& \phi(P\e)\ar[d]^(.45){\beta_{P}}_(.45){\simeq}\ar[r]&
\phi(Q\e)\ar[d]^(.45){\beta_{Q}}_(.45){\simeq}\ar[r]& \phi(T\e)\ar[d]^(.45){\beta_{T}}_(.45){\simeq}\ar[r]& 0\\
0\ar[r]& X_{\lbe\ast}\lbe(P\e)_{\lbe I}\ar[r]&X_{\lbe\ast}\lbe(Q\e)_{\lbe I}\ar[r]&X_{\lbe\ast}\lbe(T\e)_{\lbe I}\ar[r]& 0}
\]
commutes.

The isomorphism thus defined fits into a commutative diagram 
\[
\xymatrix{\phi(T\e)\ar[d]^(0.45){\beta_{T}}_(0.45){\simeq}\ar@{->>}[dr]^(0.45){\alpha_{ T}}&\\
X_{\lbe\ast}\lbe(T\e)_{\lbe I}\ar@{->>}[r]^(0.45){q_{T}}&\big(X^{\lbe\ast}\be(T\e)^{I}\e\big)^{\!\vee},
}
\]
where $q_{\le T}$ and $\alpha_{T}$ are the epimorphisms given by \eqref{iseq1} and \eqref{chom}, respectively.

The proof of Theorem \ref{phit} is now complete.

The following consequence of Theorem \ref{phit} was previously established in \cite[Corollary 2.18]{xa}.

\begin{corollary}\label{recov} There exists a canonical exact sequence of $\g$-modules
\[
0\to  H^{1}(I,X^{\lbe\ast}\be(T\e))^{D}\to\phi(T\e)\to\big(X^{\lbe\ast}\be(T\e)^{I}\e\big)^{\!\vee}\to 0.
\]
In particular, $\phi(T)_{\rm{tors}}$ is canonically isomorphic to $ H^{1}(I,X^{\lbe\ast}\be(T\e))^{D}$.
\end{corollary}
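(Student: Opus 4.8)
The plan is to deduce Corollary \ref{recov} directly from Theorem \ref{phit}, which has just been proved, by transporting the exact sequence \eqref{iseq1} along the isomorphism $\beta_{\e T}$. Recall that Lemma \ref{-1-coh}(i), applied to the $\Z$-free and finitely generated $J$-module $M=X^{\lbe\ast}\be(T\e)$ (and using $ H^{1}(J,X^{\lbe\ast}\be(T\e))= H^{1}(I,X^{\lbe\ast}\be(T\e))$ together with the identification $X^{*}\be(T)^{\vee}=X_{*}\lbe(T\e)$), gives the canonical exact sequence of $\g$-modules \eqref{iseq1}
\[
0\longrightarrow   H^{1}(I,X^{\lbe\ast}\be(T\e))^{D}\longrightarrow  X_{\lbe\ast}\lbe(T\e)_{\lbe I}\overset{q_{\le T}}{\longrightarrow }
\big(X^{\lbe\ast}\be(T\e)^{I}\e\big)^{\!\vee}\longrightarrow  0.
\]
Theorem \ref{phit} supplies the canonical isomorphism $\beta_{\e T}\colon \phi(T\e)\overset{\!\sim}{\to}X_{\lbe\ast}\lbe(T\e)_{\lbe I}$, and the final commutative triangle displayed in Section \ref{bit} records that $q_{\le T}\circ\beta_{\e T}=\alpha_{\e T}$. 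Hence pulling \eqref{iseq1} back along $\beta_{\e T}$ yields the asserted exact sequence with first nontrivial map the composite $ H^{1}(I,X^{\lbe\ast}\be(T\e))^{D}\hookrightarrow X_{\lbe\ast}\lbe(T\e)_{\lbe I}\xrightarrow{\beta_{\e T}^{-1}}\phi(T\e)$ and second map $\alpha_{\e T}$; all maps in sight are canonical (functorial in $T$), so the resulting sequence is canonical.

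For the last sentence, I would argue as follows. The target $\big(X^{\lbe\ast}\be(T\e)^{I}\e\big)^{\!\vee}$ is $\Z$-free by \eqref{eq2} (it is the linear dual of a finitely generated abelian group), so it is torsion-free; therefore the torsion subgroup of $\phi(T\e)$ maps to $0$ under $\alpha_{\e T}$ and hence lands inside the image of $ H^{1}(I,X^{\lbe\ast}\be(T\e))^{D}$. Conversely, since $\widehat H^{\,1}(I,X^{\lbe\ast}\be(T\e))= H^{1}(I,X^{\lbe\ast}\be(T\e))$ is a finite group (it is annihilated by $|J\e|$ by the remarks preceding Lemma \ref{-1-coh}), its dual $ H^{1}(I,X^{\lbe\ast}\be(T\e))^{D}$ is finite, hence torsion; so its image in $\phi(T\e)$ is contained in $\phi(T\e)_{\rm{tors}}$. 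Combining the two inclusions identifies $\phi(T\e)_{\rm{tors}}$ with the image of $ H^{1}(I,X^{\lbe\ast}\be(T\e))^{D}$, and since that map is injective we get the canonical isomorphism $\phi(T\e)_{\rm{tors}}\simeq H^{1}(I,X^{\lbe\ast}\be(T\e))^{D}$.

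I do not expect any serious obstacle here: the statement is essentially a repackaging of \eqref{iseq1} via the isomorphism of Theorem \ref{phit}, and the only mild point requiring care is the bookkeeping of which canonical maps compose to which — i.e. verifying $q_{\le T}\circ\beta_{\e T}=\alpha_{\e T}$, which is exactly the content of the commutative triangle established at the close of Section \ref{bit}. One should also note, for the ``canonical'' claim, that $\beta_{\e T}$ was constructed functorially (diagram \eqref{pt2} and the resolution \eqref{pt3}), so the displayed short exact sequence is a sequence of functors in $T$; this is what makes it legitimate to say it recovers \cite[Corollary 2.18]{xa}.
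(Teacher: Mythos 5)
Your proposal is correct and follows exactly the paper's route: the authors likewise deduce the corollary by combining Theorem \ref{phit} with the exact sequence \eqref{iseq1}, and your identification of $\phi(T\e)_{\rm{tors}}$ with the finite subgroup $H^{1}(I,X^{\lbe\ast}\be(T\e))^{D}$ (torsion-freeness of the dual target on one side, finiteness on the other) is the intended reading of the final assertion. Your extra care about $q_{\le T}\circ\beta_{\e T}=\alpha_{\e T}$ and functoriality is a welcome elaboration of what the paper leaves implicit.
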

\begin{proof} This follows from Theorem \ref{phit} together with \eqref{iseq1}.
\end{proof}

\begin{remark}\label{fin} By Lemma \ref{unip-red} and the corollary, $T$ has unipotent reduction if, and only if, $\phi(T\e)$ is finite. If this is the case, then there exists a canonical isomorphism of finite $\g$-modules $\phi(T\e)= H^{1}(I,X^{\lbe\ast}\be(T\e))^{D}$.
\end{remark}

The following result clarifies the exactness properties of the functor $\phi(-)$.

\begin{proposition}\label{6-term} Let $0\to T_{1}\to T_{2}\to T_{3}\to 0$ be an exact sequence of $K$-tori. Then the given sequence of $K$-tori induces an exact sequence of $\g$-modules
\[
0\to  H^{2}(I, X^{\lbe *}\be(T_{1}\e))^{D}\to  H^{2}(I, X^{\lbe *}\be(T_{2}\e))^{D}\to   H^{2}(I,
X^{\lbe *}\be(T_{3}\e))^{D}\to\phi(T_{1})\to \phi(T_{2})\to \phi(T_{3})\to 0.
\]
\end{proposition}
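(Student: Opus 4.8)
The plan is to combine Theorem~\ref{phit} and the functorial short exact sequence of Corollary~\ref{recov} with the long exact $I$-cohomology sequence of the short exact sequence of $\mathcal G$-modules $0\to X^{*}(T_{3})\to X^{*}(T_{2})\to X^{*}(T_{1})\to 0$ dual to the given one, together with its $\Z$-linear dual, the short exact sequence of cocharacter modules $0\to X_{*}(T_{1})\to X_{*}(T_{2})\to X_{*}(T_{3})\to 0$. I would first record three facts. (a) Since $k$ is perfect, $\Knr$ is a discretely valued field whose residue field $\kbar$ is algebraically closed, so $\mathrm{cd}(I)\leq 1$; as $I$ acts on $X^{*}(T)$ through a finite quotient we have $H^{r}(I,X^{*}(T)\otimes\Q)=0$ for $r\geq 1$, hence $H^{r}(I,X^{*}(T))\cong H^{r-1}(I,X^{*}(T)\otimes\Q/\Z)$ for $r\geq 2$, and this vanishes for $r\geq 3$ because $X^{*}(T)\otimes\Q/\Z$ is torsion. (b) $H^{1}(I,X^{*}(T))$ is finite, being the first cohomology of a finite group with finitely generated coefficients (via inflation--restriction). (c) By Theorem~\ref{phit}, $\phi(T_{i})\cong X_{*}(T_{i})_{I}$ compatibly with all relevant maps, and since $(-)_{I}$ is right exact, the cocharacter sequence yields exactness of $\phi(T_{1})\to\phi(T_{2})\to\phi(T_{3})\to 0$.

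Feeding $0\to X^{*}(T_{3})\to X^{*}(T_{2})\to X^{*}(T_{1})\to 0$ into $H^{\bullet}(I,-)$ and using $H^{3}(I,X^{*}(T_{3}))=0$ from (a), the tail of the long exact sequence gives
\[
0\to C\to H^{2}(I,X^{*}(T_{3}))\to H^{2}(I,X^{*}(T_{2}))\to H^{2}(I,X^{*}(T_{1}))\to 0,
\]
where $C$ is the image of $H^{1}(I,X^{*}(T_{1}))$ under the connecting map to $H^{2}(I,X^{*}(T_{3}))$, a finite group by (b). Applying the exact functor $(-)^{D}$ yields
\[
0\to H^{2}(I,X^{*}(T_{1}))^{D}\to H^{2}(I,X^{*}(T_{2}))^{D}\to H^{2}(I,X^{*}(T_{3}))^{D}\to C^{D}\to 0.
\]
It thus remains to identify $C^{D}$ with $\ker\!\big(\phi(T_{1})\to\phi(T_{2})\big)$ compatibly with the maps already produced; splicing the two sequences along $C^{D}$ then gives the six-term sequence, exactness at $\phi(T_{2})$ and $\phi(T_{3})$ being (c).

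For the identification I would apply Corollary~\ref{recov} to $T_{1},T_{2},T_{3}$ and their maps: this produces a short exact sequence of three-term complexes $0\to\mathcal F_{\ell}\to\mathcal F_{m}\to\mathcal F_{r}\to 0$, whose $i$-th terms are the three-term exact sequences $0\to H^{1}(I,X^{*}(T_{i}))^{D}\to\phi(T_{i})\to\big(X^{*}(T_{i})^{I}\big)^{\!\vee}\to 0$, and in which the columns $H^{1}(I,X^{*}(T_{1}))^{D}\to H^{1}(I,X^{*}(T_{2}))^{D}\to H^{1}(I,X^{*}(T_{3}))^{D}$, $\phi(T_{1})\to\phi(T_{2})\to\phi(T_{3})$, and $(X^{*}(T_{1})^{I})^{\vee}\to(X^{*}(T_{2})^{I})^{\vee}\to(X^{*}(T_{3})^{I})^{\vee}$ are complexes since $T_{1}\to T_{2}\to T_{3}$ composes to zero. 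The beginning of the associated long cohomology sequence is $0\to H^{0}(\mathcal F_{\ell})\to H^{0}(\mathcal F_{m})\to H^{0}(\mathcal F_{r})$. Here $H^{0}(\mathcal F_{r})=0$: it is the kernel of $(X^{*}(T_{1})^{I})^{\vee}\to(X^{*}(T_{2})^{I})^{\vee}$, which is injective because $\mathrm{coker}\!\big(X^{*}(T_{2})^{I}\to X^{*}(T_{1})^{I}\big)$ embeds into the finite group $H^{1}(I,X^{*}(T_{3}))$ and so has vanishing $\Z$-dual. And $H^{0}(\mathcal F_{\ell})=\ker\!\big(H^{1}(I,X^{*}(T_{1}))^{D}\to H^{1}(I,X^{*}(T_{2}))^{D}\big)=\mathrm{coker}\!\big(H^{1}(I,X^{*}(T_{2}))\to H^{1}(I,X^{*}(T_{1}))\big)^{D}$, which the connecting map $H^{1}(I,X^{*}(T_{1}))\to H^{2}(I,X^{*}(T_{3}))$ identifies with $C^{D}$. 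Hence $\ker\!\big(\phi(T_{1})\to\phi(T_{2})\big)=H^{0}(\mathcal F_{m})\cong H^{0}(\mathcal F_{\ell})=C^{D}$, and tracing the connecting homomorphisms shows this matches the surjection $H^{2}(I,X^{*}(T_{3}))^{D}\twoheadrightarrow C^{D}$ above, so the two four-term sequences glue to the asserted six-term exact sequence of $\g$-modules.

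I expect the main obstacle to lie in this last step: one has to keep straight the several dualities involved ($(-)^{D}$ versus $(-)^{\vee}$, and the canonical isomorphism $\Ext^{1}_{\Z}(A,\Z)\cong A^{D}$ for finite $A$) and to verify carefully that the connecting homomorphisms of the $I$-cohomology long exact sequence agree with those of the long exact sequence of the short exact sequence of complexes, so that the two four-term sequences are genuinely spliced rather than merely matched up to the orders of finite groups. Alternatively, once one accepts the long exact homology sequence of the profinite group $I$ (which has only six nonzero terms here, since $\mathrm{hd}(I)=\mathrm{cd}(I)\leq 1$) together with the duality $H_{1}(I,X_{*}(T))\cong H^{2}(I,X^{*}(T))^{D}$, the statement follows at once by applying that sequence to the cocharacter short exact sequence and invoking Theorem~\ref{phit}; I would nonetheless prefer the cohomological route above, as it stays within the machinery already developed.
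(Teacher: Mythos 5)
Your proposal is correct and follows the same overall strategy as the paper: both split the six\-/term sequence into (a) the $\Q/\Z$-dual of the tail of the $I$-cohomology sequence of $0\to X^{*}(T_{3})\to X^{*}(T_{2})\to X^{*}(T_{1})\to 0$, using $\mathrm{cd}(\Knr)\leq 1$ to kill $H^{3}$, and (b) the right-exact coinvariants sequence $\phi(T_{1})\to \phi(T_{2})\to \phi(T_{3})\to 0$ obtained from Theorem \ref{phit}, and both then splice the two pieces along the inclusion $H^{1}(I,X^{*}(T_{1}))^{D}\hookrightarrow \phi(T_{1})$ coming from \eqref{iseq1} and Corollary \ref{recov}, with the connecting map \emph{defined} as the composite through $H^{1}(I,X^{*}(T_{1}))^{D}$. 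The one step you handle genuinely differently is exactness at $\phi(T_{1})$: the paper quotes Brahm's theorem that $\ker[\e\phi(T_{1})\to \phi(T_{2})]$ is finite, hence contained in $\phi(T_{1})_{\rm{tors}}=H^{1}(I,X^{*}(T_{1}))^{D}$, and then identifies it with $\ker\be\big[H^{1}(I,X^{*}(T_{1}))^{D}\to H^{1}(I,X^{*}(T_{2}))^{D}\big]$; you obtain the same identification from the short exact sequence of complexes built out of Corollary \ref{recov}, using that $(X^{*}(T_{1})^{I})^{\vee}\to (X^{*}(T_{2})^{I})^{\vee}$ is injective because $\mathrm{coker}\e[X^{*}(T_{2})^{I}\to X^{*}(T_{1})^{I}]$ embeds in the finite group $H^{1}(I,X^{*}(T_{3}))$. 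Your variant is self-contained where the paper leans on an external finiteness result, at the cost of a slightly longer diagram chase. The compatibility worry you flag at the end is harmless: as in the paper, the connecting homomorphism is being constructed as the composite $H^{2}(I,X^{*}(T_{3}))^{D}\twoheadrightarrow C^{D}\cong \ker[\e\phi(T_{1})\to \phi(T_{2})]\hookrightarrow \phi(T_{1})$ rather than compared against an independently given map, so the two four-term sequences glue by definition.
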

\begin{proof} The exactness of the sequence $0\to  H^{2}(I, X^{\lbe *}\be(T_{1}\e))^{D}\to  H^{2}(I, X^{\lbe *}\be(T_{2}\e))^{D}\to   H^{2}(I,
X^{\lbe *}\be(T_{3}\e))^{D}$, which is induced by the short exact sequence of $I$-modules $0\to X^{\lbe *}\be(T_{3}\e)\to X^{\lbe *}\be(T_{2}\e)\to X^{\lbe *}\be(T_{1}\e)\to 0$, follows from the fact that $ H^{3}(I,
X^{\lbe *}\be(T_{3}\e))=0$ since $\Knr$ has cohomological dimension $\leq 1$ (see \cite[II, beginning of \S4.3, p.~85]{segc}). On the other hand, by the right exactness of the functor (from $\mathcal G$-modules to $\g$-modules) $M\mapsto M_{I}$ (see \cite[Exercise 6.1.1(2), p.~160]{we}), the short exact sequence
of $\mathcal G$-modules $0\to X_{\lbe *}\lbe(T_{1}\e)\to X_{\lbe *}\lbe(T_{2}\e)\to X_{\lbe *}\lbe(T_{3}\e)\to 0$ induces an exact sequence of $\g$-modules $X_{\lbe *}\lbe(T_{1}\e)_{\lbe I}\to X_{\lbe *}\lbe(T_{2}\e)_{\lbe I}\to X_{\lbe *}\lbe(T_{3}\e)_{\lbe I}\to 0$. By Theorem \ref{phit}, the latter sequence can be identified with a sequence $\phi(T_{1})\to \phi(T_{2})\to \phi(T_{3})\to 0$. Now the connecting homomorphism $ H^{2}(I,
X^{\lbe *}\be(T_{3}\e))^{D}\to\phi(T_{1})=X_{\lbe *}\lbe(T_{1}\e)_{\lbe I}$ in the sequence of the proposition factors as $ H^{2}(I,
X^{\lbe *}\be(T_{3}\e))^{D}\to  H^{1}(I,
X^{\lbe *}\be(T_{1}\e))^{D}\hookrightarrow X_{\lbe *}\lbe(T_{1}\e)_{\lbe I}$ (see \eqref{iseq1}), and its kernel is therefore equal to the kernel of $ H^{2}(I,
X^{\lbe *}\be(T_{3}\e))^{D}\to  H^{1}(I,
X^{\lbe *}\be(T_{1}\e))^{D}$, i.e., to the image of $ H^{2}(I,
X^{\lbe *}\be(T_{2}\e))^{D}\to   H^{2}(I,
X^{\lbe *}\be(T_{3}\e))^{D}$. It remains only to check exactness at $\phi(T_{1})$. By \cite[Theorem 2.3.4, p.~52]{bra}, the kernel of $\phi(T_{1})\to \phi(T_{2})$ is a finite $\g$-module, whence it agrees with
\[
\mathrm{Ker}\e[\e\phi(T_{1})_{\rm{tors}}\to \phi(T_{2})_{\rm{tors}}\e]=\mathrm{Ker}\e\be\big[\e  H^{1}(I, X^{\lbe *}\be(T_{1}\e))^{D}\to  H^{1}(I, X^{\lbe *}\be(T_{2}\e))^{D}\e\big]
\]
(see Corollary \ref{recov}). Since the latter group agrees with the image of $ H^{2}(I, X^{\lbe *}\be(T_{3}\e))^{D}\to  H^{1}(I, X^{\lbe *}\be(T_{1}\e))^{D}$, the proof is complete.
\end{proof}

The following corollary of the proposition generalizes Lemma \ref{phi-ex}.
\begin{corollary} Let $0\to T_{1}\to T_{2}\to T_{3}\to 0$ be an exact sequence of $K$-tori. If $\phi(T_{1}\e)$ is torsion-free, then the induced sequence of $\g$-modules
\[
0\to\phi(T_{1}\e)\to\phi(T_{2}\e)\to \phi(T_{3}\e)\to 0.
\]
is exact.
\end{corollary}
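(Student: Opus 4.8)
The plan is to obtain the corollary as an immediate consequence of Proposition~\ref{6-term}. Applying that proposition to the given short exact sequence of $K$-tori yields the exact sequence of $\g$-modules
\[
H^{2}(I, X^{\lbe *}\be(T_{3}\e))^{D}\xrightarrow{\,\partial\,}\phi(T_{1}\e)\to \phi(T_{2}\e)\to \phi(T_{3}\e)\to 0,
\]
so that exactness at $\phi(T_{2}\e)$ and surjectivity onto $\phi(T_{3}\e)$ hold unconditionally; the only remaining point is the injectivity of $\phi(T_{1}\e)\to \phi(T_{2}\e)$, which amounts to showing that the connecting map $\partial$ vanishes.

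To this end I would recall the description of $\partial$ extracted in the proof of Proposition~\ref{6-term}: there $\partial$ was shown to factor as
\[
H^{2}(I, X^{\lbe *}\be(T_{3}\e))^{D}\longrightarrow H^{1}(I, X^{\lbe *}\be(T_{1}\e))^{D}\hookrightarrow X_{\lbe *}\lbe(T_{1}\e)_{\lbe I}=\phi(T_{1}\e),
\]
where the second arrow is the canonical injection coming from \eqref{iseq1} (together with the identification of Theorem~\ref{phit}), whose image is precisely $\phi(T_{1}\e)_{\rm{tors}}$ by Corollary~\ref{recov}. Hence the image of $\partial$ is contained in $\phi(T_{1}\e)_{\rm{tors}}$. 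Since $\phi(T_{1}\e)$ is torsion-free by hypothesis, $\phi(T_{1}\e)_{\rm{tors}}=0$, so $\partial=0$ and the six-term sequence of Proposition~\ref{6-term} collapses to the asserted short exact sequence $0\to\phi(T_{1}\e)\to\phi(T_{2}\e)\to\phi(T_{3}\e)\to 0$.

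There is no real obstacle: all the work is already done in Proposition~\ref{6-term}, and the only thing one must keep track of is that the connecting homomorphism takes values in the torsion subgroup of $\phi(T_{1}\e)$---which is part of the content of Corollary~\ref{recov}. Alternatively, one can avoid citing the internal factorization of $\partial$ altogether: Proposition~\ref{6-term} still delivers right exactness of $\phi(-)$ on the given sequence, while \cite[Theorem 2.3.4, p.~52]{bra} shows that $\mathrm{Ker}\e[\e\phi(T_{1}\e)\to\phi(T_{2}\e)\e]$ is a finite $\g$-submodule of $\phi(T_{1}\e)$, hence zero when $\phi(T_{1}\e)$ is torsion-free. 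This second route makes transparent that the hypotheses (i)--(ii) of Proposition~\ref{phi-ex0} have been relaxed here to the single condition that $\phi(T_{1}\e)$ be torsion-free.
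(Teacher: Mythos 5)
Your proof is correct and follows the paper's own argument exactly: the paper also deduces the corollary from Proposition \ref{6-term} by observing that the connecting homomorphism factors through $H^{1}(I,X^{\lbe *}\be(T_{1}\e))^{D}=\phi(T_{1}\e)_{\rm{tors}}=0$. Your alternative route via the finiteness of the kernel from \cite[Theorem 2.3.4]{bra} is a valid minor variant but not needed.
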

\begin{proof} As seen in the above proof, the connecting homomorphism $ H^{2}(I,
X^{\lbe *}\be(T_{3}\e))^{D}\to\phi(T_{1})$ in the exact sequence of the proposition factors through $ H^{1}(I,
X^{\lbe *}\be(T_{1}\e))^{D}=\phi(T_{1})_{\rm{tors}}=0$.
\end{proof}

\begin{remark}\label{et} Let $0\to T_{1}\to T_{2}\to T_{3}\to 0$ be as in the corollary, i.e., $\phi(T_{1}\e)$ is torsion-free. Further, for $i=1,2$ and $3$, let $\s T_{i}$ denote the N\'eron model of $T_{i}$ over $S$. By the corollary and \cite[Theorem II.2.15, p.~63]{mi1},  $0\to i_{*}\phi(T_{1}\e)\to i_{*} \phi(T_{2}\e)\to i_{*}\phi(T_{3}\e)\to 0$ is an exact sequence of \'etale sheaves on $S$. On the other hand, since ${\rm R}^{1}j_{*}T_{1}=0$ for the \'etale topology on $S$ (see \cite[Lemma 2.3]{xa}), $0\to\s T_{1}\to \s T_{2}\to \s T_{3}\to 0$ is an exact sequence of \'etale sheaves on $S$. Thus there exists a canonical exact commutative diagram of \'etale sheaves on $S$
\[
\xymatrix{0\ar[r]& \s T_{1}\ar@{->>}[d]\ar[r]&\s T_{2}
\ar@{->>}[d]\ar[r]& \s T_{3}\ar@{->>}[d]\ar[r]& 0\\
0\ar[r]&i_{*}\phi(T_{1}\e)\ar[r]&i_{*}\phi(T_{2}\e)\ar[r]&i_{*}\phi(T_{3}\e)\ar[r]& 0.
}
\]
By \eqref{pseq}, the above diagram yields an exact sequence $0\to\s T_{1}^{\e 0}\to \s T_{2}^{\e 0}\to \s T_{3}^{\e 0}\to 0$ of \'etale sheaves on $S$ and therefore an exact sequence of (representable) \'etale $k$-sheaves
\begin{equation}\label{sgps-0}
0\to i^{*}\!\s T_{1}^{\e 0}\to i^{*}\!\s T_{2}^{\e 0}\to i^{*}\!\s T_{3}^{\e 0}\to 0.
\end{equation}
Since \eqref{ff} is an equivalence, the latter sequence corresponds to a sequence of smooth, affine, commutative and connected algebraic $k$-group schemes
\begin{equation}\label{seq-gps}
0\to\s T_{1,s}^{\e 0}\overset{\be f}{\to} \s T_{2,s}^{\e 0}\overset{\be g}{\to}\s T_{3,s}^{\e 0}\to 0.
\end{equation}
The exactness of \eqref{sgps-0} implies that $g$ is surjective and $f$ identifies $\s T_{1,s}^{\e 0}$ with $\mathrm{Ker}\e g:=\s T_{2,s}^{\e 0}\times_{\s T_{3,s}^{\e 0}}\spec k$. See, for example, \cite[Lemma 2.1]{bg1}. Now \cite[Lemma 2.5]{bg1} shows that the sequence of representable presheaves on $\spec k$ induced by \eqref{seq-gps} is an exact sequence of fppf and fpqc sheaves on $\spec k$. In other words, \eqref{seq-gps} is exact for the \'etale, fppf and fpqc topologies on $\spec k$.
\end{remark}

\section{The Cohomology of Tori}\label{t-coh}

All cohomology groups below are taken with respect to the \'etale topology on the relevant scheme.

Let $T$ be a $K$-torus. The maximal subtorus $T_{(m)}$ of $T$ having multiplicative reduction is the $K$-torus with character module $NX^{\lbe\ast}\be(T\e)$ (see \cite[Proposition 1.2]{nx}). On the other hand, the maximal quotient torus $T^{(u)}$ of $T$ having unipotent reduction
is the $K$-torus with character module ${}_{N}X^{\lbe\ast}\be(T\e)$. Indeed, if $Y$ is a
$\mathcal G$-submodule of $X^{\lbe\ast}\be(T\e)$ such that $Y^{I}=Y^{J}=0$, then $NY=0$, i.e., $Y\subset
{}_{N}X^{\lbe\ast}\be(T\e)$. Now the exact sequence of $\mathcal G$-modules $0\to {}_{N}X^{\lbe\ast}\be(T\e)\to X^{\lbe\ast}\be(T\e)\to NX^{\lbe\ast}\be(T\e)\to 0$
induces an exact sequence of $K$-tori 
\begin{equation}\label{tseq}
0\to T_{(m)}\to T\to T^{(u)}\to 0.
\end{equation}
Let 
$\s T_{(m)}$, $\s T$ and $\s T^{(u)}$ denote, respectively, the N\'eron models of  $T_{(m)},T$ and $T^{(u)}$ over $S$. Since $X^{\lbe\ast}\lbe(T_{(m)})=NX^{\lbe\ast}\be(T\e)$ is torsion-free, Remark \ref{et} yields a sequence of
smooth, affine, commutative and connected algebraic $k$-groups
\begin{equation}\label{too}
0\to\s T_{(m),\le s}^{\e 0}\to\s  T^{\e 0}_{s}\to
\big(\s T^{(u)}\big)_{\lbe s}^{\be 0}\to 0.
\end{equation}
The latter sequence is exact for the \'etale, fppf and fpqc topologies on $\spec k$. Set
\begin{equation}\label{tau}
\tau=\s T_{(m),\le s}^{\e 0}\,,
\end{equation}
which is the unique maximal $k$-torus of $\s  T^{\e 0}_{s}$. Note that the $\g$-module of characters of $\tau$ is $NX^{\lbe\ast}\be(T\e)$. In particular, if $T$ has multiplicative
reduction, so that $J=1$ and $NX^{\lbe\ast}\be(T\e)=X^{\lbe\ast}\be(T\e)$, then the character module of $\tau$ is
$X^{\lbe\ast}\be(T\e)$ regarded as a $\g$-module.

\begin{lemma}\label{tau-coh} For every $r\geq 1$, there exists a canonical
isomorphism of abelian groups
$ H^{r}(k,\s  T^{\e 0}_{s})= H^{r}(k,\tau)$.
\end{lemma}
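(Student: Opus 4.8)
The plan is to deduce the isomorphism $H^{r}(k,\s T^{\e 0}_{s})\cong H^{r}(k,\tau)$ from the exact sequence \eqref{too} of smooth connected affine commutative algebraic $k$-groups
\[
0\to\tau\to\s T^{\e 0}_{s}\to\big(\s T^{(u)}\big)_{\lbe s}^{\be 0}\to 0,
\]
which by Remark \ref{et} is exact for the \'etale topology on $\spec k$, together with the fact that the quotient $\big(\s T^{(u)}\big)_{\lbe s}^{\be 0}$ is \emph{unipotent}. Indeed, $T^{(u)}$ has unipotent reduction by construction (its character module ${}_{N}X^{\lbe\ast}\be(T\e)$ satisfies $({}_{N}X^{\lbe\ast}\be(T\e))^{I}=0$, so Lemma \ref{unip-red} applies), so $\big(\s T^{(u)}\big)_{\lbe s}^{\be 0}$ is a smooth connected commutative unipotent $k$-group. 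The key input is then that such a group has trivial higher \'etale cohomology over the perfect field $k$: one has $H^{r}\big(k,\big(\s T^{(u)}\big)_{\lbe s}^{\be 0}\big)=0$ for all $r\geq 1$.

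First I would record this vanishing statement carefully. A smooth connected commutative unipotent group $U$ over a perfect field $k$ admits a filtration with successive quotients isomorphic to $\G_{a,k}$ (this uses perfectness of $k$); since $\G_{a,k}$ is a quasi-coherent sheaf, $H^{r}(k,\G_{a,k})=H^{r}_{\mathrm{\acute{e}t}}(\spec k,\cO)=0$ for $r\geq 1$ as $\spec k$ is a point, and a dévissage along the filtration propagates the vanishing to $U$. (Alternatively, for $r\ge 1$ and any smooth connected unipotent $U$ one can cite the standard fact that $H^{r}(k,U)=0$ over a perfect field, e.g.\ via the connected–unipotent structure theory; for $r=1$ this is Galois descent for unipotent groups.) The ``$k$ perfect'' hypothesis from the Basic Setting is exactly what makes this work, and it is the only place it is really needed here.

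Next I would write the long exact \'etale cohomology sequence associated to \eqref{too}:
\[
\cdots\to H^{r}\big(k,\big(\s T^{(u)}\big)_{\lbe s}^{\be 0}\big)\to H^{r}(k,\tau)\to H^{r}(k,\s T^{\e 0}_{s})\to H^{r}\big(k,\big(\s T^{(u)}\big)_{\lbe s}^{\be 0}\big)\to\cdots
\]
Feeding in the vanishing of the outer terms for all $r\geq 1$ immediately yields that $H^{r}(k,\tau)\to H^{r}(k,\s T^{\e 0}_{s})$ is an isomorphism for every $r\geq 1$, which is the claim (note $\tau=\s T_{(m),\le s}^{\e 0}$ by \eqref{tau}, and the map is the one induced by the inclusion $\tau\hookrightarrow\s T^{\e 0}_{s}$, hence canonical).

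The only delicate point — the ``main obstacle'' — is the vanishing $H^{r}(k,U)=0$ for $r\ge 1$ and $U$ a smooth connected commutative unipotent $k$-group over a \emph{perfect} field. Over an imperfect field this fails, so the argument genuinely rests on perfectness; over a perfect field it is standard, and the cleanest route is the $\G_{a}$-filtration dévissage sketched above. Everything else is a formal consequence of the long exact sequence. One should also make sure the long exact sequence is available: by Remark \ref{et} the sequence \eqref{too} is exact as a sequence of \'etale sheaves on $\spec k$, so its \'etale cohomology long exact sequence is legitimate.
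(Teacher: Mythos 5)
Your overall strategy (reduce to the vanishing of the higher cohomology of the unipotent part via a $\G_{a}$-d\'evissage over the perfect field $k$) is the same as the paper's, but there is a genuine gap at $r=1$, caused by a mis-indexed long exact sequence. The short exact sequence is $0\to\tau\to\s T^{\e 0}_{s}\to U\to 0$ with $U=\big(\s T^{(u)}\big)_{\lbe s}^{\be 0}$, so the correct portion of the long exact sequence around $H^{r}(k,\tau)$ is
\[
H^{\le r-1}(k,U)\longrightarrow H^{\le r}(k,\tau)\longrightarrow H^{\le r}(k,\s T^{\e 0}_{s})\longrightarrow H^{\le r}(k,U),
\]
not the sequence you wrote with $H^{\le r}(k,U)$ on the left. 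For $r\geq 2$ both outer terms vanish and your conclusion is correct. But for $r=1$ the left-hand term is $H^{\e 0}(k,U)=U(k)$, which is certainly not zero, so the vanishing of $H^{\le r}(k,U)$ for $r\geq 1$ only gives surjectivity of $H^{\le 1}(k,\tau)\to H^{\le 1}(k,\s T^{\e 0}_{s})$; injectivity requires that the connecting map $U(k)\to H^{\le 1}(k,\tau)$ vanish, i.e., that $\s T^{\e 0}_{s}(k)\to U(k)$ be surjective, and nothing in your argument supplies this.

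The missing ingredient is exactly what the paper uses: over a perfect field the extension \eqref{too} \emph{splits}, i.e., $\s T^{\e 0}_{s}\simeq\tau\times U$ as $k$-group schemes (SGA~3, XVII, Theorem~6.1.1). With the splitting, the connecting map is zero (indeed the whole lemma becomes immediate from $H^{\le r}(k,U)=0$ for $r\geq 1$, without any diagram chase). Your d\'evissage of $U$ into copies of $\G_{a,k}$ and the vanishing $H^{\le r}(k,\G_{a})=0$ for $r\geq 1$ are fine and match the paper; you are also right that $T^{(u)}$ has unipotent reduction by Lemma \ref{unip-red}. So the proof is repaired simply by invoking the splitting before (or instead of) the long exact sequence; as written, the case $r=1$ does not follow.
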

\begin{proof} Since $k$ is perfect, the sequence \eqref{too} splits
(see \cite[XVII, Theorem 6.1.1]{sga3}), i.e., there exists an isomorphism of $k$-group schemes $\s T_{\lbe s}^{\e 0}\simeq\tau\times U$, where
$U=\big(\s T^{(u)}\big)_{\lbe s}^{\be 0}$. Since $U$ is algebraic, smooth,
connected and unipotent, it has a composition series whose successive quotients
are $k$-isomorphic to $\G_{a,k}$ (see \cite[XVII, Corollary 4.1.3]{sga3}). Thus,
since $ H^{r}(k, \G_{a})=0$ for every $r\geq 1$ by \cite[Chapter X, \S1,
Proposition 1, p.~150]{self}, we have $ H^{r}(k, U)=0$ for every $r\geq 1$. The
lemma is now clear.
\end{proof}

Now, since ${\rm R}^{\lbe s}j_{*}T=0$ for the \'etale topology on $S$ for
all $s>0$ by \cite[Lemma 2.3]{xa}, the Leray spectral sequence in \'etale
cohomology
$ H^{\le r}(S,{\rm R}^{\lbe s}\be j_{*}T\e)\implies  H^{\le r+s}(K,T\e)$ yields isomorphisms
$ H^{\le r}(S,\s T\e)= H^{\le r}(S,j_{*}T\e)= H^{r}(K,T\e)$ for every $r\geq 0$. On the other hand, $ H^{\le r}(S,\s T^{\e 0})= H^{\le r}(k,\s T^{\e 0}_{\lbe s})= H^{\le r}(k,\tau)$ for every $r\geq 1$ by Lemma \ref{tau-coh} and \cite[Theorem 11.7]{dix}. Further,
$ H^{\le r}(S,i_{*}\phi(T\e))= H^{\le r}(k,i^{*}i_{*}\phi(T\e))= H^{\le r}(k,\phi(T\e))$ for every $r\geq 0$ by \cite[Proposition II.1.1(b), p.149]{adt}. Thus, by \eqref{pseq}, there exists a canonical exact sequence of abelian groups
\begin{equation}\label{long-coh}
\dots\to  H^{\le r}(k,\tau)\to  H^{\le r}(K,T\e)\to
 H^{\le r}(k,\phi(T\e))\to \dots
\end{equation}
where $r\geq 1$ and $\tau$ is the $k$-torus \eqref{tau}. By Theorem \ref{phit}, the preceding sequence is canonically isomorphic to a sequence
\begin{equation}\label{long-coh1}
\dots\to  H^{\le r}(k,\tau)\to  H^{\le r}(K,T\e)\to
 H^{\le r}(k,X_{\lbe\ast}\lbe(T\e)_{\lbe I})\to \dots.
\end{equation}
We now derive some consequences of \eqref{long-coh} and \eqref{long-coh1}.

\begin{proposition}\label{u-r-t} Assume that $T$ has unipotent reduction.
Then, for every $r\geq 1$, there exists a canonical isomorphism of abelian groups
\[
 H^{\le r}(K,T\e)= H^{\le r}\be\big(k, H^{\le 1}(I,X^{\lbe\ast}\be(T\e))^{D}\le\big).
\]
\end{proposition}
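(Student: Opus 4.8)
The plan is to read off the statement from the long exact sequence \eqref{long-coh} together with Remark \ref{fin}, the crucial point being that the $k$-torus $\tau$ of \eqref{tau} degenerates to the trivial group scheme under the unipotent reduction hypothesis, so that \eqref{long-coh} breaks up into a family of isomorphisms.

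First I would verify that $\tau=0$. Since $T$ has unipotent reduction, $\s T^{\e 0}_{s}$ is a unipotent $k$-group scheme by definition, and a unipotent group contains no nontrivial torus; hence its maximal $k$-torus $\tau=\s T^{\e 0}_{(m),\le s}$ is trivial. Equivalently, by Lemma \ref{unip-red} we have $X^{\lbe\ast}\be(T\e)^{I}=0$, and since the norm map \eqref{norm2} has image contained in $X^{\lbe\ast}\be(T\e)^{J}=X^{\lbe\ast}\be(T\e)^{I}$, this forces $NX^{\lbe\ast}\be(T\e)=0$; as $\tau$ is the $k$-torus with character module $NX^{\lbe\ast}\be(T\e)$, it is trivial. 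In particular $ H^{\le r}(k,\tau)=0$ for every $r\geq 0$.

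Next I would feed this vanishing into \eqref{long-coh}. For each $r\geq 1$ the relevant portion of that sequence is
\[
H^{\le r}(k,\tau)\to H^{\le r}(K,T\e)\to H^{\le r}(k,\phi(T\e))\to H^{\le r+1}(k,\tau),
\]
and both outer terms are zero, so \eqref{long-coh} yields a canonical isomorphism $ H^{\le r}(K,T\e)\overset{\!\sim}{\to} H^{\le r}(k,\phi(T\e))$ for every $r\geq 1$. Finally, since $T$ has unipotent reduction, Remark \ref{fin} provides a canonical isomorphism of finite $\g$-modules $\phi(T\e)= H^{\le 1}(I,X^{\lbe\ast}\be(T\e))^{D}$, which induces a canonical isomorphism on $k$-cohomology in every degree; composing the two gives the asserted canonical isomorphism $ H^{\le r}(K,T\e)= H^{\le r}\be\big(k, H^{\le 1}(I,X^{\lbe\ast}\be(T\e))^{D}\le\big)$ for all $r\geq 1$.

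I do not expect a serious obstacle here: the argument is a direct application of the machinery already assembled. The one point that deserves a moment's care is to confirm that $\tau$ is literally the trivial group scheme rather than merely a torus with no characters — this is what makes $ H^{\le r}(k,\tau)$ vanish in all degrees (recall that for a nontrivial torus the higher cohomology over $k$ need not vanish, as $k$ is only assumed perfect here) and hence makes every three-term segment of \eqref{long-coh} collapse — and to note that all the identifications used are functorial, so that the resulting isomorphism is canonical.
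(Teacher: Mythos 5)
Your proposal is correct and follows exactly the paper's own argument: the paper's proof is the one-line observation that the claim ``is clear from \eqref{long-coh} and Remark \ref{fin} since $\tau=0$ in this case,'' and your verification that $NX^{\lbe\ast}\be(T\e)=0$ (hence $\tau=0$) together with the collapse of \eqref{long-coh} and the identification $\phi(T\e)=H^{\le 1}(I,X^{\lbe\ast}\be(T\e))^{D}$ from Remark \ref{fin} is precisely the intended reasoning, just spelled out in more detail.
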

\begin{proof} This is clear from \eqref{long-coh} and Remark \ref{fin} since $\tau=0$ in this
case.
\end{proof}

The next proposition generalizes \cite[Example III.2.22(c), p.~108]{mi1}
(at least when the ring $A$ appearing in [loc.cit.] is complete).

\begin{proposition} \label{mult-red}
Assume that $T$ has multiplicative reduction. Then, for every $r\geq 1$, the
sequence of abelian groups induced by \eqref{long-coh1}
\[
0\to  H^{\le r}(k,\tau)\to  H^{\le r}(K,T\e)\to  H^{\le r}(k,X_{\lbe\ast}\lbe(T\e))\to 0
\]
is split exact.
\end{proposition}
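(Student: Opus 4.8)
The plan is to deduce the proposition from the assertion that, when $T$ has multiplicative reduction, the short exact sequence of \'etale sheaves on $S$
\[
0\to\s T^{\e 0}\to\s T\to i_{*}\phi(T\e)\to 0
\]
from \eqref{pseq} is \emph{split}; once that is known, applying $ H^{\le r}(S,-)$ and re-expressing the three cohomology groups exactly as in the derivation of \eqref{long-coh} yields the asserted split exact sequence at once. First I would record the simplifications available in the multiplicative reduction case: $I$ acts trivially on $X^{\lbe\ast}\be(T\e)$, whence $\s T^{\e 0}_{s}=\tau$ (with character module $X^{\lbe\ast}\be(T\e)$), $\phi(T\e)\cong X_{\lbe\ast}\lbe(T\e)=X_{\lbe\ast}\lbe(T\e)_{\lbe I}$ via \eqref{mris}, and $ H^{1}(I,X^{\lbe\ast}\be(T\e))=\Hom(I,X^{\lbe\ast}\be(T\e))=0$, so that the epimorphism $\alpha_{\e T}$ of \eqref{chom} is an isomorphism by Corollary \ref{iso2}.

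To split \eqref{pseq} I would begin by splitting the sequence \eqref{pseq2}. Choosing a uniformizer $\pi\in A$, the homomorphism $n\mapsto\pi^{n}$ defines a section $i_{*}\Z_{\e k}\to j_{*}\G_{m,K}$ of the valuation morphism in \eqref{pseq2}: indeed, since $A$ is henselian the connected \'etale $S$-schemes are spectra of discrete valuation rings finite and unramified over $A$, on each of which $\pi$ is again a uniformizer, so the decomposition of the unit group as the unit group of the ring times $\pi^{\Z}$ is compatible with \'etale base change. Hence \eqref{pseq2} is split. Next I would apply $\underline{\Hom}_{\e S}(j_{*}\underline{X}^{*}\be(T\e),-)$ to \eqref{pseq2}; as in the proof of Lemma \ref{vtseq} this functor is exact on \eqref{pseq2} (because $\underline{\Ext}^{1}_{S_{\mr{\acute{e}t}}}(j_{*}\underline{X}^{*}\be(T\e),\G_{m,S})=0$), so it produces a short exact sequence of \'etale sheaves on $S$, necessarily split by functoriality, which by \eqref{id1}, \eqref{id2} and the equality $\big(X^{\lbe\ast}\be(T\e)^{I}\e\big)^{\!\vee}=X_{\lbe\ast}\lbe(T\e)$ (trivial $I$-action) reads $0\to\underline{\Hom}_{\e S}(j_{*}\underline{X}^{*}\be(T\e),\G_{m,S})\to\s T\overset{v_{\le T}}{\to}i_{*}X_{\lbe\ast}\lbe(T\e)\to 0$. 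Finally I would identify this with \eqref{pseq}: by the adjunction $(i^{*},i_{*})$ the map $v_{\le T}$ is determined by $i^{*}v_{\le T}$, which by diagram \eqref{diag2} factors as the canonical quotient $\s T_{\lbe s}=i^{*}\s T\twoheadrightarrow\phi(T\e)$ followed by $\alpha_{\e T}$; since $\alpha_{\e T}$ is an isomorphism here, $v_{\le T}$ coincides, up to \eqref{mris}, with the epimorphism $\s T\to i_{*}\phi(T\e)$ of \eqref{pseq}, and in particular $\mathrm{Ker}\, v_{\le T}=\s T^{\e 0}$. Thus \eqref{pseq} itself is split.

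For the last step, since $ H^{\le r}(S,-)$ is additive it carries the split short exact sequence \eqref{pseq} to a split exact sequence $0\to H^{\le r}(S,\s T^{\e 0})\to H^{\le r}(S,\s T)\to H^{\le r}(S,i_{*}\phi(T\e))\to 0$ for every $r\geq 1$. I would then re-express the terms precisely as when \eqref{long-coh} was obtained: $ H^{\le r}(S,\s T)= H^{\le r}(K,T\e)$ by the Leray spectral sequence together with \cite[Lemma 2.3]{xa}; $ H^{\le r}(S,\s T^{\e 0})= H^{\le r}(k,\s T^{\e 0}_{s})= H^{\le r}(k,\tau)$ for $r\geq1$ by \cite[Theorem 11.7]{dix}, using $\s T^{\e 0}_{s}=\tau$; and $ H^{\le r}(S,i_{*}\phi(T\e))= H^{\le r}(k,\phi(T\e))= H^{\le r}(k,X_{\lbe\ast}\lbe(T\e))$ by \cite[Proposition II.1.1(b)]{adt} and \eqref{mris}. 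This recovers exactly the sequence induced by \eqref{long-coh1}, now seen to be split exact, which is the claim. I expect the main obstacle to be this splitting at the level of sheaves: the section of \eqref{pseq2} (the uniformizer argument, resting on the identification of the small \'etale site of $S$ with that of $\spec k$), its transport through $\underline{\Hom}_{\e S}(j_{*}\underline{X}^{*}\be(T\e),-)$, and the verification that the kernel of the resulting epimorphism is indeed $\s T^{\e 0}$.
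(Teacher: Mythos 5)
There is a genuine gap at the central step of your argument: the sequence \eqref{pseq2} is \emph{not} split as a sequence of \'etale sheaves on $S$, and the proposed section $n\mapsto\pi^{n}$ is not a morphism of sheaves $i_{*}\Z_{\e k}\to j_{*}\G_{m,K}$. Indeed, by the adjunction between $j^{*}$ and $j_{*}$ one has
\[
\Hom_{\e S}(\e i_{*}\Z_{\e k},\e j_{*}\G_{m,K})=\Hom_{\e K}(\e j^{*}i_{*}\Z_{\e k},\e\G_{m,K})=\Hom_{\e K}(0,\G_{m,K})=0,
\]
so the \emph{only} morphism from $i_{*}\Z_{\e k}$ to $j_{*}\G_{m,K}$ is zero. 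Concretely, your formula is compatible with transition maps among connected \'etale $S$-schemes meeting the closed fibre, but it does not commute with restriction to the generic fibre: for $S'=\spec A'$ finite \'etale over $S$ with fraction field $K'$, the restriction $i_{*}\Z_{\e k}(S')=\Z\to i_{*}\Z_{\e k}(S'_{K})=0$ is zero, whereas $j_{*}\G_{m,K}(S')\to j_{*}\G_{m,K}(S'_{K})$ is the identity on $K'^{*}$; hence any sheaf morphism must send $1\in\Z$ to $1\in K'^{*}$, not to $\pi$. Consequently \eqref{pseq} cannot be split at the level of sheaves either, and the remainder of your argument (transporting the splitting through $\underline{\Hom}_{\e S}(j_{*}\underline{X}^{*}\be(T\e),-)$ and applying $H^{\le r}(S,-)$) does not get off the ground. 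Your preliminary identifications (that $\s T^{\e 0}_{s}=\tau$, that $\phi(T\e)\cong X_{\lbe\ast}\lbe(T\e)$, and that $v_{\le T}$ induces the map $\s T\to i_{*}\phi(T\e)$ of \eqref{pseq}) are correct; it is only the splitting that fails.

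The correct use of the uniformizer, and what the paper does, is to let $\pi$ define a morphism $u_{\e \G_{m,K}}\colon j_{*}\Z_{K}\to j_{*}\G_{m,K}$ whose source is $j_{*}\Z_{K}$ rather than $i_{*}\Z_{\e k}$. The composite $v_{\le \G_{m,K}}\!\circ u_{\e \G_{m,K}}\colon j_{*}\Z_{K}\to i_{*}\Z_{\e k}$ is then the canonical map $w_{\e \G_{m,K}}$ arising from $0\to j_{!}\Z_{K}\to j_{*}\Z_{K}\to i_{*}\Z_{\e k}\to 0$; it is not an isomorphism of sheaves, but after applying $\underline{\Hom}_{\e S}(j_{*}\underline{X}^{*}\be(T\e),-)$ and using $H^{\le r}(S,j_{!}\le\underline{X}_{\le *}\lbe(T\e))=0$ for $r\geq 1$, the induced map $H^{\le r}(w_{\le T})$ becomes an isomorphism, so that $H^{\le r}(u_{\le T})\circ H^{\le r}(w_{\le T})^{-1}$ is a section of $H^{\le r}(v_{\le T})$. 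In other words, only the cohomology sequence in each degree $r\geq 1$ splits, not the sheaf sequence; the splitting must be produced degreewise in this way.
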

\begin{proof} Since $I$ acts trivially on $X_{*}\lbe(T\e)$, \eqref{vmor} is a morphism $v_{\le T}\colon \s T\to i_{*}\le\big(\underline{X}^{*}\be(T\e)^{I}\e\big)^{\!\vee}=i_{*}\underline{X}_{\le *}\be(T\e)$ and the map $ H^{r}(K,T\e)\to  H^{r}(k,X_{\lbe\ast}\lbe(T\e))$ appearing in the sequence of the proposition can be identified with the homomorphism $ H^{r}(v_{\le T})\colon  H^{r}(S,\s T\e)\to  H^{r}(S,i_{*}\le\underline{X}_{\le *}\be(T\e))$ induced by $v_{\le T}$. Recall also that, via the identifications \eqref{id1} and \eqref{id2}, $v_{\le T}$ can be identified with the morphism
\[
\underline{\Hom}_{\e S}(\e j_{*}\underline{X}^{*}\be(T\e),v_{\le \G_{m,K}})\colon \underline{\Hom}_{\e S}(\e j_{*}\underline{X}^{*}\be(T\e),j_{*}\G_{m,K})\to \underline{\Hom}_{\e S}(\e j_{*}\underline{X}^{*}\be(T\e),i_{*}\Z_{\e k}),
\]
where $v_{\e \G_{m,K}}\colon j_{*}\G_{m,K}\to i_{*}\Z_{\e k}$ is the morphism appearing in the exact sequence \eqref{pseq2}. Now choose a uniformizer $\pi\in A$, let $u\colon\Z_{K}\to\G_{m,K}$ be the homomorphism of $K$-group schemes which maps $1\in\Z$ to $\pi\in \G_{m,K}\lbe(K)=K^{*}$ and let $u_{\e \G_{m,K}}\colon j_{*}\le\Z_{K}\to j_{*}\G_{m,K}$ be the morphism of \'etale sheaves on $S$ induced by $u$. By Remark \ref{xrem}, there exists a canonical isomorphism (of \'etale sheaves on $S$) $j_{*}\le\underline{X}_{*}\lbe(T\e)=
\underline{\Hom}_{\e S}(\e j_{*}\underline{X}^{*}\be(T\e),j_{*}\Z_{K})$. Thus 
\[
\underline{\Hom}_{\e S}(\e j_{*}\underline{X}^{*}\be(T\e),u_{\e \G_{m,K}})\colon \underline{\Hom}_{S}(j_{*}\underline{X}^{*}\be(T\e),j_{*}\Z_{K})\to \underline{\Hom}_{S}(j_{*}\underline{X}^{*}\be(T\e),j_{*}\G_{m,K})
\]
can be identified with a morphism
$u_{\le T}\colon j_{*}\le\underline{X}_{\le *}\lbe(T\e)\to \s T$. Clearly, the morphism of \'etale sheaves on $S$
\[
v_{\le T}\circ u_{\le T}\colon j_{*}\underline{X}_{\le *}\lbe(T\e)\to i_{*}\le\underline{X}_{\le *}\lbe(T\e)
\]
can be identified with $\underline{\Hom}_{\e S}(\e j_{*}\underline{X}^{*}\be(T\e),v_{\le \G_{m,K}}\!\be\circ\be  u_{\e \G_{m,K}})$, where $v_{\le \G_{m,K}}\!\circ u_{\e \G_{m,K}}\colon j_{*}\Z_{K}\to i_{*}\Z_{k}$.
On the other hand, by \cite[Proposition II.8.2.1, p.~142]{t}, for any \'etale sheaf $\s F$ on $S$ there exists a canonical exact exact sequence of \'etale sheaves on $S$
\[
0\to j_{!}j^{*}\!\s F\to \s F\to i_{*}i^{*}\!\s F\to 0.
\]
Setting $\s F=j_{*}\underline{X}_{\le *}\lbe(T\e)$ above and noting that $j^{*}j_{*}\underline{X}_{\le *}\lbe(T\e)=\underline{X}_{\le *}\lbe(T\e)$ and $i^{*}j_{*}\underline{X}_{\le *}\lbe(T\e)=\underline{X}_{\le *}\lbe(T\e)$, we obtain a canonical exact sequence of \'etale sheaves on $S$
\[
0\longrightarrow  j_{!}\le\underline{X}_{\le *}\lbe(T\e)\longrightarrow  j_{*}\underline{X}_{\le *}\lbe(T\e)\stackrel{\! w_{T}}{\longrightarrow }i_{*}\le\underline{X}_{\le *}\lbe(T\e)\longrightarrow  0.
\]
It is immediate that $v_{\le \G_{m,K}}\be\circ u_{\e \G_{m,K}}=w_{\e \G_{m,K}}$, which implies that $v_{\le T}\circ u_{\le T}=w_{\le T}$ for any $T$. On the other hand, since $ H^{r}(S,j_{!}\le\underline{X}_{\le *}\lbe(T\e))=0$ for every $r\geq 1$ by \cite[Proposition II.1.1(a), p.~149]{adt}, the map $ H^{r}(w_{\le T})\colon  H^{r}(S,j_{*}\underline{X}_{\le *}\lbe(T\e)\e)\to  H^{r}(S,i_{*}\le\underline{X}_{\le *}\lbe(T\e))$ induced by $w_{\le T}$ is an isomorphism for every $r\geq 1$ and we conclude that $ H^{r}(u_{\le T})\circ  H^{r}(w_{\le T})^{-1}$ is a section (i.e., right inverse) of $ H^{r}(v_{\le T})$.
\end{proof}

The following lemma is well-known but we were unable to find an appropriate reference.
 
\begin{lemma}\label{coh-triv} Assume that $k$ has cohomological dimension $\leq 1$. If $\tau$ is a $k$-torus, then $ H^{r}(k,\tau\e)=0$ for every $r\geq 1$.
\end{lemma}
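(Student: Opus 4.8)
The plan is to reduce the statement to the vanishing of Galois cohomology for the split torus $\G_{m}$ and then to use the structure theory of tori over $\kbar$ to handle general $k$-tori. First I would recall that any $k$-torus $\tau$ splits over a finite separable (in fact Galois) extension $k^{\prime}/k$, say with group $\Gamma=\mathrm{Gal}(k^{\prime}/k)$, so that $\tau_{k^{\prime}}\simeq\G_{m,k^{\prime}}^{\,n}$ where $n=\dim\tau$. Since $k$ has cohomological dimension $\leq 1$, every finite separable extension $k^{\prime}$ also has cohomological dimension $\leq 1$ (cohomological dimension does not increase under finite separable extension), so it suffices to know the result for split tori over every such $k^{\prime}$ and then descend.

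Next I would treat the split case. For $\G_{m,k^{\prime}}$ we have $ H^{r}(k^{\prime},\G_{m,k^{\prime}})=0$ for all $r\geq 1$: the case $r=1$ is Hilbert 90, the case $r=2$ is the vanishing of the Brauer group of a field of cohomological dimension $\leq 1$ (this is essentially the definition, or an immediate consequence, via $\mathrm{Br}(k^{\prime})=H^{2}(k^{\prime},\G_{m})$ and $\mathrm{cd}(k^{\prime})\leq 1$), and for $r\geq 3$ one invokes $\mathrm{cd}(k^{\prime})\leq 1$ directly together with the fact that $H^{r}(k^{\prime},\G_{m})$ is torsion for $r\geq 2$ (so that it is computed by the Galois cohomology of the torsion module $\mu$, which vanishes in degrees $>1$). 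Hence $ H^{r}(k^{\prime},\tau_{k^{\prime}})=0$ for all $r\geq 1$.

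Finally I would run the Hochschild--Serre spectral sequence $ H^{p}\big(\Gamma, H^{q}(k^{\prime},\tau_{k^{\prime}})\big)\Rightarrow H^{p+q}(k,\tau)$. By the previous step all rows with $q\geq 1$ vanish, so the spectral sequence collapses and gives $ H^{r}(k,\tau)\simeq H^{r}\big(\Gamma,\tau(k^{\prime})\big)$ for all $r\geq 0$. But $\mathrm{cd}(k)\leq 1$ forces $H^{r}(k,M)=0$ for all $r\geq 2$ and all torsion $\Gamma$-modules $M$; more relevantly, one can argue directly that $H^{r}(k,\tau)=0$ for $r\geq 2$ using that $\mathrm{cd}(k)\leq 1$ applies to the cohomology of the (not necessarily torsion) module $\tau(k^{\sep})$ once one knows it is a direct limit of finitely generated modules induced from the $\G_{m}$ situation — alternatively, and more cleanly, I would simply invoke the standard fact that a profinite group of cohomological dimension $\leq 1$ has vanishing cohomology in degrees $\geq 2$ with coefficients in any discrete module that is a union of its finitely generated submodules, which applies to $\tau(k^{\sep})\simeq(k^{\sep\,*})^{n}$ after passing to a splitting field. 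The remaining case $r=1$ is then the only substantive point: $ H^{1}(k,\tau)\simeq H^{1}(\Gamma,(k^{\prime\,*})^{n})$ and one concludes by Hilbert~90 applied over $k$ directly, i.e. $H^{1}(k,\G_{m})=0$, combined with the fact that $\tau$ being a torus makes $H^{1}(k,\tau)$ fit into an exact sequence controlled by $H^{1}$ and $H^{2}$ of split tori over the splitting field, all of which vanish. The main obstacle is organizing the degree bookkeeping so that the non-torsion module $\tau(k^{\sep})$ is handled correctly by the cohomological dimension hypothesis; the cleanest route is to reduce everything, via a finite splitting field and Shapiro's lemma, to $H^{r}(k^{\prime\prime},\G_{m})$ for finite separable $k^{\prime\prime}/k$ and then quote Hilbert~90 and $\mathrm{cd}\leq 1$.
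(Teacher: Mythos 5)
There is a genuine gap at the crucial step: passing from the vanishing of cohomology of the \emph{split} torus over the splitting field to the vanishing of $H^{r}(\Gamma,\tau(k^{\prime}))$ for the \emph{twisted} $\Gamma$-module $\tau(k^{\prime})\simeq\Hom(X^{*}(\tau),k^{\prime\,*})$. Your Hochschild--Serre reduction to $H^{r}(\Gamma,\tau(k^{\prime}))$ is fine, but Hilbert~90 ``applied over $k$ directly'' says nothing about this twisted module: over a general field Hilbert~90 holds and yet $H^{1}(k,\tau)\neq 0$ for, say, a norm-one torus, so some input beyond Hilbert~90 is essential, and the ``exact sequence controlled by $H^{1}$ and $H^{2}$ of split tori'' that is supposed to supply it is never exhibited (the obvious embedding of $\tau$ into an induced torus produces a cokernel that is again a nonsplit torus, so no induction closes). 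The paper's mechanism is precisely what is missing here: since $k$ is perfect and of cohomological dimension $\leq 1$, it is a field of dimension $\leq 1$ in Serre's sense, so $l^{*}$ is a \emph{cohomologically trivial} $\mathrm{Gal}(l/k)$-module for every finite Galois $l/k$ (this uses the vanishing of $H^{1}$ \emph{and} $H^{2}$ for all subgroups, via the Nakayama--Tate criterion, not just Hilbert~90), and then $\Hom(X^{*}(\tau),l^{*})$ inherits cohomological triviality because $X^{*}(\tau)$ is $\Z$-free. That transfer statement is the key lemma your proposal lacks.

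The case $r\geq 2$ is also not established as written. The ``standard fact'' you invoke --- that a profinite group of cohomological dimension $\leq 1$ has vanishing cohomology in degrees $\geq 2$ on any discrete module that is a union of finitely generated submodules --- is false: $H^{2}(\widehat{\Z},\Z)\simeq H^{1}(\widehat{\Z},\Q/\Z)\simeq\Q/\Z\neq 0$, even though $\widehat{\Z}$ has cohomological dimension $1$. Cohomological dimension bounds apply only to torsion coefficients, and $\tau(k^{\sep})$ is not torsion; the vanishing in degrees $\geq 2$ again comes out of the cohomological triviality of $l^{*}$ rather than out of a dimension count. A smaller point: the vanishing of $\mathrm{Br}(k^{\prime})$ for all finite extensions $k^{\prime}/k$ is not a formal consequence of $\mathrm{cd}(k)\leq 1$ alone; it uses the perfectness of $k$ (Serre, \emph{Galois Cohomology}, II.3.1), which your argument should cite explicitly.
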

\begin{proof} Since $k$ is perfect, $k$ is a field of dimension $\leq 1$ by \cite[Chapter II, \S3.1, Proposition 6(b), p.~78]{segc}. Thus, by \cite[Chapter II, \S3.1, Proposition 5(iii), p.~78]{segc}, for any finite Galois extension $l/k$, $l^{*}$ is a cohomologically trivial $\text{Gal}(l/k)$-module. Let $l$ be the minimal splitting field of $\tau$. Then, since $X^{*}\be(\tau)$ is a free (and therefore projective) $\Z$-module, we have $\text{Ext}^{1}_{\Z}(X^{*}\be(\tau),l^{*})=0$, whence $\tau(l)=\Hom(X^{*}\be(\tau),l^{*})$ (see \eqref{thom}) is cohomologically trivial as well by \cite[Chapter IX, \S5, Theorem 9, p.~145]{self}. Finally, since $ H^{r}(k,\tau\e)$ is the inductive limit of the groups $ H^{r}(\text{Gal}(l/k),\tau(l)\e)$ as $l/k$ ranges over the set of all finite Galois subextensions of $\kbar/k$, the proof is complete.
\end{proof}

\begin{theorem} \label{kt2} Assume that $k$ has cohomological dimension $\leq 1$. Then
\begin{enumerate}
\item[(i)] The sequence $0\to\s T^{\e 0}(\lbe A)\to T(K)\to \phi(T\e)(k)\to 0$ is exact.
\item[(ii)] For $r=1$ and $2$, there exist canonical isomorphisms
\[
 H^{\le r}(K,T\e)\simeq  H^{\le r}\be\big(k,X_{\lbe\ast}\lbe(T\e)_{\lbe I}\lbe\big)
\]
\end{enumerate}
If $r\geq 3$, the groups $ H^{r}(K,T\e)$ vanish.
\end{theorem}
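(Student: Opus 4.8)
The plan is to deduce everything from the exact sequence \eqref{long-coh1} together with the vanishing results already established. First I would invoke Lemma \ref{coh-triv}: since $k$ has cohomological dimension $\leq 1$ and $\tau$ is a $k$-torus, $H^{\le r}(k,\tau)=0$ for every $r\geq 1$. Feeding this into the long exact sequence \eqref{long-coh1}, the connecting maps force $H^{\le r}(K,T\e)\overset{\sim}{\to}H^{\le r}(k,X_{\lbe\ast}\lbe(T\e)_{\lbe I})$ for every $r\geq 1$; in particular this gives (ii) for $r=1,2$. For part (i), the same sequence in low degrees reads $0\to H^{0}(k,\tau)\to H^{0}(S,\s T)\to H^{0}(k,\phi(T\e))\to H^{1}(k,\tau)$, and I would identify $H^{0}(S,\s T)=\s T(S)=T(K)$ (by the Néron mapping property, or the identification $H^{0}(S,j_{*}T)=H^{0}(K,T)$ already used before \eqref{long-coh}) and $H^{0}(S,\s T^{\e 0})=\s T^{\e 0}(S)=\s T^{\e 0}(A)$; since $\s T^{\e 0}$ is smooth and $A$ is a complete (hence Henselian) local ring, $\s T^{\e 0}(A)=\s T^{\e 0}_{s}(k)$, but more directly $H^{0}(S,\s T^{\e 0})\to H^{0}(S,\s T)$ is just the inclusion $\s T^{\e 0}(A)\hookrightarrow T(K)$. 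Then $H^{0}(k,\phi(T\e))=\phi(T\e)(k)$ by the same base-change argument ($H^{0}(S,i_{*}\phi(T\e))=H^{0}(k,\phi(T\e))$) used in the derivation of \eqref{long-coh}, and the vanishing $H^{1}(k,\tau)=0$ from Lemma \ref{coh-triv} makes the map $T(K)\to\phi(T\e)(k)$ surjective. This yields the exact sequence in (i).

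For the final assertion, that $H^{\le r}(K,T\e)=0$ for $r\geq 3$, I would argue via the isomorphism $H^{\le r}(K,T\e)\simeq H^{\le r}(k,X_{\lbe\ast}\lbe(T\e)_{\lbe I})$ just obtained (valid for all $r\geq 1$) and then show $H^{\le r}(k,X_{\lbe\ast}\lbe(T\e)_{\lbe I})=0$ for $r\geq 3$. The cleanest route is to bound cohomological dimension: since $k$ is perfect of cohomological dimension $\leq 1$, its absolute Galois group $\g$ has $\mathrm{cd}(\g)\leq 1$; however $X_{\lbe\ast}\lbe(T\e)_{\lbe I}$ need not be torsion-free, so one cannot directly conclude vanishing of $H^{\le r}$ for $r\geq 2$. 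Instead I would use the canonical resolution. Recall from \eqref{cres} the exact sequence $0\to P\to Q\to T\to 0$ of $K$-tori where $P$ has multiplicative reduction and $H^{1}(I,X^{*}(Q))=0$, so by Lemma \ref{tors-free} $\phi(Q\e)$ — equivalently $X_{\lbe\ast}\lbe(Q\e)_{\lbe I}$ — is torsion-free, and likewise $X_{\lbe\ast}(P\e)_{\lbe I}=X_{\lbe\ast}(P\e)$ is torsion-free. Hence $X_{\lbe\ast}\lbe(P\e)$ and $X_{\lbe\ast}\lbe(Q\e)_{\lbe I}$ are finitely generated torsion-free $\g$-modules, so $H^{\le r}(k,X_{\lbe\ast}(P\e))=H^{\le r}(k,X_{\lbe\ast}\lbe(Q\e)_{\lbe I})=0$ for $r\geq 2$ because a finitely generated torsion-free Galois module over a field of cohomological dimension $\leq 1$ has no higher cohomology (it is induced up from a $\Z$-free module over a finite quotient, and one reduces to $H^{\le r}(\mathrm{Gal}(l/k),\Z[\mathrm{Gal}(l/k)]^{a})$, which is handled as in Lemma \ref{coh-triv} and Shapiro's lemma together with $\mathrm{cd}(k)\leq 1$). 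Applying the cohomology of the short exact sequence $0\to X_{\lbe\ast}(P\e)\to X_{\lbe\ast}\lbe(Q\e)_{\lbe I}\to X_{\lbe\ast}\lbe(T\e)_{\lbe I}\to 0$ of Lemma \ref{xseq}, the segment $H^{\le r}(k,X_{\lbe\ast}\lbe(Q\e)_{\lbe I})\to H^{\le r}(k,X_{\lbe\ast}\lbe(T\e)_{\lbe I})\to H^{\le r+1}(k,X_{\lbe\ast}(P\e))$ gives $H^{\le r}(k,X_{\lbe\ast}\lbe(T\e)_{\lbe I})=0$ for $r\geq 2$; composing with the isomorphism of (ii) (extended to all $r\geq 1$) gives $H^{\le r}(K,T\e)=0$ for $r\geq 3$, as desired. (In fact this also reproves $H^{\le 2}(K,T\e)\simeq H^{\le 2}(k,X_{\lbe\ast}\lbe(T\e)_{\lbe I})$ consistently.)

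I would organize the write-up in the order: (1) quote Lemma \ref{coh-triv} to kill $H^{\ge 1}(k,\tau)$; (2) read off (i) and (ii) from \eqref{long-coh1}, spelling out the identifications $H^{0}(S,\s T^{\e 0})=\s T^{\e 0}(A)$, $H^{0}(S,\s T)=T(K)$, $H^{0}(k,\phi(T\e))=\phi(T\e)(k)$; (3) establish the torsion-free vanishing lemma for Galois modules over $k$ and apply it along the resolution \eqref{cres} via Lemma \ref{xseq} to get $H^{\ge 2}(k,X_{\lbe\ast}\lbe(T\e)_{\lbe I})=0$, hence $H^{\ge 3}(K,T\e)=0$.

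I expect the main obstacle to be step (3): the module $X_{\lbe\ast}\lbe(T\e)_{\lbe I}$ carries torsion in general (its torsion subgroup is $H^{1}(I,X^{*}(T\e))^{D}$ by \eqref{iseq1} and Corollary \ref{recov}), so one genuinely needs the detour through the torsion-free resolution $0\to P\to Q\to T\to 0$ rather than a naive dimension-shifting argument; care is also required to make the auxiliary claim \textbf{``a finitely generated torsion-free $\g$-module over $k$ of $\mathrm{cd}\leq 1$ has vanishing $H^{\ge 2}$''} precise, reducing it to induced modules over finite Galois quotients and combining Shapiro's lemma with $\mathrm{cd}(k)\leq 1$ exactly as in the proof of Lemma \ref{coh-triv}.
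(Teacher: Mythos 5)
Your treatment of (i) and (ii) is correct and is essentially the paper's own argument: kill $H^{\le r}(k,\tau)$ for $r\geq 1$ by Lemma \ref{coh-triv} and read the conclusions off the long exact sequence \eqref{long-coh}/\eqref{long-coh1}, with the identifications $H^{0}(S,\s T\e)=T(K)$, $H^{0}(S,\s T^{\e 0})=\s T^{\e 0}(A)$ and $H^{0}(k,\phi(T\e))=\phi(T\e)(k)$.

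Your step (3), however, contains a genuine error. The auxiliary claim that a finitely generated torsion-free discrete $\g$-module $M$ over a field of cohomological dimension $\leq 1$ has $H^{\le r}(k,M)=0$ for $r\geq 2$ is false: already for $k$ quasi-finite, $\g\simeq\hat{\Z}$ has $\mathrm{cd}=1$, yet $H^{2}(\hat{\Z},\Z)\simeq H^{1}(\hat{\Z},\Q/\Z)=\Hom(\hat{\Z},\Q/\Z)=\Q/\Z\neq 0$. (The reduction you sketch does not work: a torsion-free module factoring through a finite quotient ${\rm Gal}(l/k)$ is not thereby induced, and even for an induced module Shapiro's lemma only reduces you to $H^{\le r}({\rm Gal}(\kbar/l),\Z)$, which is again nonzero for $r=2$; the analogue of Lemma \ref{coh-triv} fails because $\Z$, unlike $l^{*}$, is not a cohomologically trivial Galois module in dimension $\leq 1$.) Your conclusion $H^{\le 2}(k,X_{\lbe\ast}\lbe(T\e)_{\lbe I})=0$ would, via your own part (ii), force $H^{\le 2}(K,\G_{m,K})={\rm Br}(K)=0$ for $K$ a $p$-adic field, which is absurd; this should have been a warning sign when you noted that your argument ``reproves'' the $r=2$ case. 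The detour through the resolution $0\to P\to Q\to T$ is therefore unnecessary and unsalvageable in this form. The correct (and much shorter) ingredient, which is what the paper uses, is that $\mathrm{cd}(k)\leq 1$ implies $\mathrm{scd}(k)\leq 2$, so $H^{\le r}(k,M)=0$ for $r\geq 3$ and \emph{arbitrary} discrete $\g$-modules $M$ (Shatz, Proposition 14, p.~54, cited as \cite{sh1} in the paper); applying this to $M=\phi(T\e)\simeq X_{\lbe\ast}\lbe(T\e)_{\lbe I}$ in \eqref{long-coh} gives the vanishing of $H^{\le r}(K,T\e)$ for $r\geq 3$ directly.
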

\begin{proof} The last assertion follows from \eqref{long-coh} since
$ H^{r}(k,\tau)= H^{r}(k,\phi(T\e))=0$ for $r\geq 3$. Assertions (i) and (ii) are immediate from \eqref{long-coh}, \eqref{long-coh1} and Lemma \ref{coh-triv}. 
\end{proof}

\begin{remark} If $k$ has cohomological dimension $\leq 1$, then $ H^{r}\be\big(k, H^{1}(I,X^{\lbe\ast}\be(T\e))^{D}\le\big)=0$ for $r\geq 2$ since $ H^{1}(I,X^{\lbe\ast}\be(T\e))^{D}$ is a finite $\g$-module. Thus, by \eqref{iseq1} and assertion (ii) of the theorem,
\[
 H^{2}(K,T\e)= H^{2}\big(k,X_{\lbe\ast}\lbe(T\e)_{\lbe I}\be\big)= H^{2}\big(k,(X^{\lbe\ast}\be(T\e)^{I})^{\vee}\le\big).
\]
\end{remark}

The preceding remark is a particular case of the following proposition.

\begin{proposition}\label{fprop} Assume that $k$ has finite cohomological dimension $n\geq 1$. Then there exists a canonical isomorphism of divisible abelian groups
\[
 H^{\e n+1}(K,T\e)= H^{\e n+1}\be\big(k,(X^{\lbe\ast}\be(T\e)^{I})^{\vee}\le\big).
\]
If $r\geq n+2$, the groups $ H^{r}(K,T\e)$ vanish.
\end{proposition}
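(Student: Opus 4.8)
The plan is to read off both assertions from the long exact cohomology sequence \eqref{long-coh}
\[
\dots\to  H^{\le r}(k,\tau)\to  H^{\le r}(K,T\e)\to
 H^{\le r}(k,\phi(T\e))\to H^{\le r+1}(k,\tau)\to\dots\qquad (r\geq 1),
\]
combined with the description of $\phi(T\e)$ furnished by Corollaries \ref{recov} and \ref{iso2}. The two cohomological inputs I need over the base field $k$ are: \textbf{(a)} for every $k$-torus $\tau$ one has $ H^{\le r}(k,\tau\e)=0$ for all $r\geq n+1$; and \textbf{(b)} for every finitely generated $\Z$-free $\g$-module $\Lambda$ one has $ H^{\le r}(k,\Lambda)=0$ for $r\geq n+2$, while $ H^{\e n+1}(k,\Lambda)$ is divisible. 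Fact (a) is the analogue of Lemma \ref{coh-triv} for cohomological dimension $n$, and (b) is proved by the same mechanism.

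To prove (a) I would apply the exact functor $\Hom_{\Z}(X^{\lbe\ast}\be(\tau\e),-)$ — exact because $X^{\lbe\ast}\be(\tau\e)$ is $\Z$-free — to the short exact sequence of $\g$-modules $0\to\mu_{\infty}\to\kbar^{*}\to\kbar^{*}/\mu_{\infty}\to 0$, which by \eqref{thom} yields a short exact sequence of $\g$-modules $0\to\Hom_{\Z}(X^{\lbe\ast}\be(\tau\e),\mu_{\infty})\to\tau(\kbar)\to\Hom_{\Z}(X^{\lbe\ast}\be(\tau\e),\kbar^{*}/\mu_{\infty})\to 0$. The third term is a $\Q$-vector space, so its $\g$-cohomology vanishes in all positive degrees (the cohomology of a profinite group in positive degrees with coefficients in a $\Q$-vector space is both torsion and uniquely divisible, hence zero); the first term is torsion, so its cohomology vanishes in all degrees $>n=\mathrm{cd}(k)$. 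Since $n\geq 1$, the long exact cohomology sequence gives $ H^{\le r}(k,\tau\e)=0$ for $r\geq n+1$. For (b) I would instead apply the snake lemma to $0\to\Lambda\to\Lambda\otimes_{\Z}\Q\to\Lambda\otimes_{\Z}\Q/\Z\to 0$: the middle term is uniquely divisible and the right-hand term is torsion, so $ H^{\le r}(k,\Lambda)=0$ for $r\geq n+2$ and $ H^{\e n+1}(k,\Lambda)$ is a quotient of $ H^{\e n}(k,\Lambda\otimes_{\Z}\Q/\Z)$. That last group is divisible: for each integer $m\geq 1$, multiplication by $m$ on $\Lambda\otimes_{\Z}\Q/\Z$ is surjective with kernel $\Lambda/m\Lambda$, and $ H^{\e n+1}(k,\Lambda/m\Lambda)=0$ because $\Lambda/m\Lambda$ is finite and $n+1>\mathrm{cd}(k)$, so multiplication by $m$ is surjective on $ H^{\e n}(k,\Lambda\otimes_{\Z}\Q/\Z)$ for every $m$.

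With (a) and (b) in hand I would conclude as follows. By Corollaries \ref{recov} and \ref{iso2} there is a canonical exact sequence of $\g$-modules $0\to\phi(T\e)_{\rm{tors}}\to\phi(T\e)\to\big(X^{\lbe\ast}\be(T\e)^{I}\big)^{\!\vee}\to 0$ whose left-hand term is finite; since $\mathrm{cd}(k)=n$, the associated long exact sequence gives canonical isomorphisms $ H^{\le r}(k,\phi(T\e))\overset{\!\sim}{\to}  H^{\le r}\big(k,(X^{\lbe\ast}\be(T\e)^{I})^{\vee}\big)$ for every $r\geq n+1$. Feeding this and (a) into \eqref{long-coh}: for $r\geq n+2$ the outer terms $ H^{\le r}(k,\tau)$ and $ H^{\le r}(k,\phi(T\e))$ both vanish (the second by (b)), hence $ H^{\le r}(K,T\e)=0$; and for $r=n+1$ the terms $ H^{\e n+1}(k,\tau)$ and $ H^{\e n+2}(k,\tau)$ vanish by (a), so \eqref{long-coh} produces a canonical isomorphism $ H^{\e n+1}(K,T\e)\overset{\!\sim}{\to} H^{\e n+1}(k,\phi(T\e))\overset{\!\sim}{\to} H^{\e n+1}\big(k,(X^{\lbe\ast}\be(T\e)^{I})^{\vee}\big)$, which is divisible by (b) applied to $\Lambda=(X^{\lbe\ast}\be(T\e)^{I})^{\vee}$ (this module is $\Z$-free because $X^{\lbe\ast}\be(T\e)^{I}$ is). The step I expect to be the main obstacle is (a): cohomological dimension $n$ bounds cohomology only for torsion coefficients, so $ H^{\le r}(k,\tau\e)$ is not directly controlled, and the essential point is to peel off the torsion subgroup $\mu_{\infty}$ of $\kbar^{*}$ and observe that the divisible torsion-free quotient contributes nothing in positive degrees — the same trick of tensoring against $0\to\Z\to\Q\to\Q/\Z\to 0$ then disposes of the $\Z$-free module in (b). Everything else is a routine diagram chase in \eqref{long-coh}.
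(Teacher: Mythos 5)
Your proof is correct and follows essentially the same route as the paper: both arguments run the long exact sequence \eqref{long-coh} (equivalently \eqref{long-coh1}), kill $H^{\le r}(k,\tau)$ for $r\geq n+1$ using the divisibility of $\tau\lbe\big(\e\kbar\e\big)$, and then pass from $\phi(T\e)$ to $\big(X^{\lbe\ast}\be(T\e)^{I}\big)^{\!\vee}$ by discarding the finite kernel $H^{1}(I,X^{\lbe\ast}\be(T\e))^{D}$ of \eqref{iseq1}. The only difference is that the paper simply cites Shatz \cite{sh1} (Proposition 14, p.~54, and Corollary 1, p.~55) for your facts (a) and (b), whereas you supply the standard d\'evissage proofs (peeling off $\mu_{\infty}\subset\kbar^{*}$, and tensoring with $0\to\Z\to\Q\to\Q/\Z\to 0$) yourself.
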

\begin{proof} The group on the right above is divisible by \cite[Corollary 1, p.~55]{sh1}. Now, since $\tau\lbe\big(\e\kbar\e\big)$ is divisible, we have $ H^{\le r}(k,\tau)=0$ for every $r\geq n+1$ by \cite[Proposition 14, p.~54]{sh1}. Thus \eqref{long-coh1} yields a canonical isomorphism of abelian groups $ H^{\le r}(K,T\e)= H^{\le r}(k,X_{\lbe\ast}\lbe(T\e)_{\lbe I})$ for every $r\geq n+1$. On the other hand, since $ H^{1}(I,X^{\lbe\ast}\be(T\e))^{D}$ is finite, we have $ H^{ r}\be\big(k, H^{1}(I,X^{\lbe\ast}\be(T\e))^{D}\le\big)=0$ for all $r\geq n+1$ and \eqref{iseq1} yields isomorphisms $ H^{\le r}(k,X_{\lbe\ast}\lbe(T\e)_{\lbe I})= H^{\le r}\big(k,(X^{\lbe\ast}\be(T\e)^{I})^{\vee}\e\big)$ for each $r\geq n+1$. The latter group vanishes if $r\geq n+2$ by \cite[Proposition 14, p.~54]{sh1}, whence the proposition follows.
\end{proof}

\section{Abelian Cohomology of Reductive Groups}\label{ab-coh}
Assume that $k$ has cohomological dimension $\leq 1$.
Recall that a $K$-torus $F$ is called {\it flasque} if the $\mathcal G$-module $X_{*}\lbe(F\e)$ is $ H^{1}$-trivial (see \cite[Lemma 1(iv), p.~179]{cts}). By \cite[Proposition-Definition 3.1, p.~88, and Proposition 2.2, p.~86]{ct}, any connected reductive algebraic group $G$ over $K$ admits a {\it flasque resolution}, i.e., there exists a central extension 
\begin{equation}\label{f-res}
1\to F\to H\to G\to 1,
\end{equation}
where $F$ is a flasque $K$-torus and $H$ is a connected reductive algebraic over $K$ such that the derived group $H^{{\rm{der}}}$ of $H$ is simply connected and $R:=H^{\tor}:=H/H^{{\rm{der}}}$ is a quasi-trivial $K$-torus. The finitely generated  (continuous) $\mathcal G$-module $\pi_{1}(G):=\mathrm{Coker}\e [X_{*}\lbe(F\e)\to X_{*}\lbe(R\e)\e]$ is independent (up to isomorphism) of the choice of resolution \eqref{f-res} and is called the {\it algebraic fundamental group of $G$}. See \cite[Proposition-Definition 6.1, p.~102]{ct}. Recall from \cite[Corollary 4.3]{ga1} that, if $r\geq 1$ is an integer, the {\it $r$-th (flat) abelian cohomology group of $G$} may be defined as the flat hypercohomology group
\[
 H^{\le r}_{\rm{ab}}(K_{\rm{fl}}, G \e):={\mathbb H}^{\e r}\lbe
(K_{\lbe{\rm fppf}},\pi_{1}\lbe(G\e)\!\otimes^{\e\mathbf{L}}\!
\G_{m,K}).
\]

These abelian groups are of interest because they can be related to the pointed Galois cohomology sets
$ H^{r}(K,G)$ for $r=1$ and $2$. See Corollary \ref{abcor} below for the case $r=1$.

Now, since $R$ is quasi-trivial, we have $ H^{1}(K,R)=0$. Further, $ H^{r}(K,F)=0$ for every $r\geq 3$ by the last assertion of Theorem \ref{kt2}. Thus, by \cite[Proposition 4.2]{ga1}, $ H^{\le r}_{\rm{ab}}(K_{\rm{fl}},G\e)=0$ for every $r\geq 3$ and \eqref{f-res} induces an exact sequence of abelian groups
\begin{equation}\label{ab1}
0\to  H^{\le 1}_{\rm{ab}}(K_{\rm{fl}},G\e)\to  H^{\le
2}(K,F\e)\to  H^{\le 2}(K,R\e)\to  H^{\le 2}_{\rm{ab}}(K_{\rm{fl}},G\e)\to 0.
\end{equation}
Now, by \cite[Proposition 6.2, p.~102]{ct}, there exists a canonical exact sequence of $\mathcal G$-modules
\begin{equation}\label{piseq}
0\to X_{*}\lbe(F\e)\to X_{*}\lbe(R\e)\to\pi_{1}(G)\to 0
\end{equation}
which induces a short exact sequence of $\g$-modules
\begin{equation}\label{piI}
0\to X_{*}\lbe(F\e)_{\lbe I}/M\to X_{*}\lbe(R\e)_{\lbe I}\to\pi_{1}(G)_{\lbe I}\to 0,
\end{equation}
where $M$ is a finite submodule of $X_{*}\lbe(F\e)_{\lbe I}$ which is isomorphic to a quotient of $H_{1}\be\big(J,\pi_{1}(G)\e\big)$ (see the proof of Lemma \ref{xseq}). 
Since $ H^{\le r}(k,M)=0$ for every $r\geq 2$, we have $ H^{\le 2}(k,X_{*}\lbe(F\e)_{\lbe I}/M)= H^{\le 2}(k,X_{*}\lbe(F\e)_{\lbe I}\lbe)$ and $ H^{\le r}(k,X_{*}\lbe(F\e)_{\lbe I}/M)=0$ for $r\geq 3$ (see \cite[Proposition 14, p.~54]{sh1}). Further, $ H^{\le 1}(k,X_{*}\lbe(R\e)_{\lbe I})= H^{1}(K,R)=0$ by Theorem \ref{kt2}(ii). Thus \eqref{piI} induces an exact sequence of abelian groups
\begin{equation}\label{ab2}
0\to  H^{\le 1}(k,\pi_{1}(G)_{\lbe I})\to  H^{\le
2}(k,X_{*}\lbe(F\e)_{\lbe I}\e)\to  H^{\le 2}(k,X_{*}\lbe(R\e)_{\lbe I})\to  H^{\le 2}(k,\pi_{1}(G)_{\lbe I})\to 0.
\end{equation}
Now consider the exact commutative diagram
\[
\xymatrix{0\ar[r]& H^{\le 1}_{\rm{ab}}(K_{\rm{fl}},G\e)\ar[r]&
 H^{\le
2}(K,F\e)\ar[d]^{\sim}\ar[r]&  H^{\le
2}(K,R\e)\ar[d]^{\sim}\ar[r]&  H^{\le 2}_{\rm{ab}}(K_{\rm{fl}},G\e)\ar[r]&0\\
0\ar[r]&  H^{\le 1}(k,\pi_{1}(G)_{\lbe I})\ar[r]&
 H^{\le 2}(k,X_{*}\lbe(F\e)_{\lbe I}\e)\ar[r]&  H^{\le
2}(k,X_{*}\lbe(R\e)_{\lbe I}\e)\ar[r]&  H^{\le 2}(k,\pi_{1}(G)_{\lbe I})\ar[r]&0
}
\]
whose top and bottom rows are the sequences \eqref{ab1} and \eqref{ab2}, respectively, and vertical maps are the isomorphisms of Theorem \ref{kt2}(ii). Clearly, the maps $ H^{\le
2}(K,F\e)\stackrel{\be\sim}{\to} H^{\le 2}(k,X_{*}\lbe(F\e)_{\lbe I})$ and $ H^{\le
2}(K,R\e)\stackrel{\be\sim}{\to} H^{\le 2}(k,X_{*}\lbe(R\e)_{\lbe I})$ induce isomorphisms
\[
H^{r}_{\rm{ab}}(K_{\rm{fl}},G\le)\stackrel{\sim}{\to} H^{\le r}(k,\pi_{1}(G)_{\lbe I}\lbe)
\]
for $r=1$ and $2$. Thus the following holds.

\begin{theorem}\label{abt} Assume that $k$ has cohomological dimension $\leq 1$ and let $G$ be a connected reductive algebraic group over $K$. Then the flasque resolution \eqref{f-res} induces isomorphisms of abelian groups
\[
H^{r}_{\rm{ab}}(K_{\rm{fl}},G\e)\simeq  H^{r}(k,\pi_{1}(G\le)_{\lbe I}\lbe)
\]
for $r=1$ and $2$. If $r\geq 3$, the groups $ H^{\le r}_{\rm{ab}}(K_{\rm{fl}},G\e)$ vanish.
\end{theorem}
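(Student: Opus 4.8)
The plan is to assemble the ingredients developed above into a map of four-term exact sequences and read off the conclusion by a diagram chase. First I would fix a flasque resolution $1\to F\to H\to G\to 1$ as in \eqref{f-res}, so that $F$ is a flasque $K$-torus, $H^{\der}$ is simply connected, and $R=H^{\tor}$ is a quasi-trivial $K$-torus. The vanishing claim for $r\geq 3$ is then immediate: $ H^{\le 1}(K,R\e)=0$ because $R$ is quasi-trivial, and $ H^{\le r}(K,F\e)=0$ for $r\geq 3$ by the last assertion of Theorem \ref{kt2}, so \cite[Proposition 4.2]{ga1} gives $ H^{\le r}_{\rm{ab}}(K_{\rm{fl}},G\e)=0$ for $r\geq 3$ together with the four-term exact sequence \eqref{ab1}.

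Next I would produce the residue-field counterpart \eqref{ab2} of \eqref{ab1}. The composite $F\to H\to R$ is a morphism of $K$-tori inducing the injection $X_{*}\lbe(F\e)\hookrightarrow X_{*}\lbe(R\e)$ of \eqref{piseq}; passing to $I$-coinvariants and using that $H_{1}\lbe\big(J,\pi_{1}(G)\e\big)$ is finite gives \eqref{piI} with a finite submodule $M\subset X_{*}\lbe(F\e)_{\lbe I}$. Since $k$ has cohomological dimension $\leq 1$, one has $ H^{\le r}(k,M)=0$ for $r\geq 2$, hence $ H^{\le 2}(k,X_{*}\lbe(F\e)_{\lbe I}/M)= H^{\le 2}(k,X_{*}\lbe(F\e)_{\lbe I}\e)$; also $ H^{\le 1}(k,X_{*}\lbe(R\e)_{\lbe I})= H^{\le 1}(K,R\e)=0$ by Theorem \ref{kt2}(ii). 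Feeding these vanishings into the long cohomology exact sequence of \eqref{piI} yields \eqref{ab2}.

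Finally I would assemble the commutative diagram with exact rows whose top row is \eqref{ab1}, whose bottom row is \eqref{ab2}, and whose two inner vertical arrows are the isomorphisms $ H^{\le 2}(K,F\e)\simeq H^{\le 2}(k,X_{*}\lbe(F\e)_{\lbe I}\e)$ and $ H^{\le 2}(K,R\e)\simeq H^{\le 2}(k,X_{*}\lbe(R\e)_{\lbe I}\e)$ of Theorem \ref{kt2}(ii). The one point requiring care is the commutativity of the central square: this is the naturality in $T$ of the isomorphism of Theorem \ref{kt2}(ii), which I would deduce from the functoriality of the defining sequence \eqref{pseq} (hence of \eqref{long-coh}) together with the commutativity of \eqref{pt2} --- that is, from the fact that the isomorphism $\phi(T\e)\overset{\!\sim}{\to}X_{\lbe\ast}\lbe(T\e)_{\lbe I}$ of Theorem \ref{phit} was constructed functorially in $T$ --- applied to the torus morphism $F\to R$. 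I expect this naturality check to be the main (and essentially only) obstacle; granting it, splitting each four-term exact sequence into two short exact sequences and invoking the five lemma (or a direct diagram chase) forces the outer vertical arrows $ H^{\le r}_{\rm{ab}}(K_{\rm{fl}},G\e)\to H^{\le r}(k,\pi_{1}(G\le)_{\lbe I}\lbe)$ to be isomorphisms for $r=1$ and $2$, which is the assertion of the theorem.
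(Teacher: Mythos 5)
Your proposal follows essentially the same route as the paper: derive \eqref{ab1} from the flasque resolution and the vanishing results of Theorem \ref{kt2}, derive \eqref{ab2} from \eqref{piseq} via \eqref{piI} using the cohomological-dimension hypothesis, and compare the two four-term sequences through the isomorphisms of Theorem \ref{kt2}(ii). Your explicit flagging of the naturality of those isomorphisms (via the functoriality of \eqref{pseq} and \eqref{pt2}) is a point the paper leaves implicit, but it is the same argument.
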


\begin{corollary}\label{abcor}  Assume that $k$ has cohomological dimension $\leq 1$. Then there exists a bijection of pointed sets
\[
H^{1}(K,G\le)\simeq H^{\le 1}(k,\pi_{1}(G\le)_{\lbe I}\lbe).
\]
In particular, $H^{1}(K,G\le)$ can be endowed with an abelian group structure.
\end{corollary}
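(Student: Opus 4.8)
The plan is to combine Theorem \ref{abt} in the case $r=1$ with the bijectivity of the degree‑one abelianization map. Recall that for a connected reductive $K$‑group $G$ there is a canonical, functorial abelianization map of pointed sets
\[
\mathrm{ab}^{1}_{G}\colon H^{1}(K,G)\longrightarrow H^{1}_{\mathrm{ab}}(K_{\mathrm{fl}},G)
\]
(see \cite{ga1}; over a field of characteristic zero this is Borovoi's abelianization map, and in general it can be computed from the flasque resolution \eqref{f-res}). The crux of the argument is to show that, under the standing hypothesis that $k$ has cohomological dimension $\leq 1$, the map $\mathrm{ab}^{1}_{G}$ is a \emph{bijection}. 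The key input is the triviality of $H^{1}(K',G')$ for every finite separable extension $K'/K$ and every semisimple simply connected $K'$‑group $G'$, i.e.\ Serre's Conjecture~II for $K$; this holds by Bruhat--Tits theory, since $k$ is perfect of cohomological dimension $\leq 1$ and finite extensions of $K$ again have perfect residue field of cohomological dimension $\leq 1$. Granting this, the standard twisting argument applies: each fiber of $\mathrm{ab}^{1}_{G}$ is a quotient of $H^{1}(K,H_{a})$ for an appropriate inner twist $H_{a}$ of the group $H$ of \eqref{f-res}, and $H^{1}(K,H_{a})$ is trivial because $H_{a}$ is reductive with simply connected derived group and with $H^{1}(K,-)$‑trivial toric quotient; this gives injectivity, and a parallel argument gives surjectivity. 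This is presumably the content of Lemma \ref{bor}, whose proof was communicated to us by Borovoi.

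Granting the bijection $\mathrm{ab}^{1}_{G}\colon H^{1}(K,G)\overset{\!\sim}{\longrightarrow}H^{1}_{\mathrm{ab}}(K_{\mathrm{fl}},G)$, I would then simply compose it with the isomorphism $H^{1}_{\mathrm{ab}}(K_{\mathrm{fl}},G)\simeq H^{1}(k,\pi_{1}(G)_{I})$ provided by Theorem \ref{abt}, obtaining the asserted bijection of pointed sets. For the final assertion, observe that $H^{1}_{\mathrm{ab}}(K_{\mathrm{fl}},G)=\mathbb{H}^{1}(K_{\mathrm{fppf}},\pi_{1}(G)\otimes^{\mathbf{L}}\G_{m,K})$ is an abelian group; transporting this group structure along the bijection endows $H^{1}(K,G)$ with the structure of an abelian group.

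I expect the main obstacle to be precisely the bijectivity of $\mathrm{ab}^{1}_{G}$, and within it, pinning down a clean statement — valid for a complete discretely valued field with perfect residue field of cohomological dimension $\leq 1$, possibly of positive characteristic — of the triviality of $H^{1}$ of semisimple simply connected groups over $K$. Everything beyond that point is formal: a single composition with Theorem \ref{abt} together with a transport of structure.
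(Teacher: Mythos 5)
Your overall strategy is the one the paper follows: establish that the abelianization map $\mathrm{ab}^{1}\colon H^{1}(K,G)\to H^{1}_{\mathrm{ab}}(K_{\mathrm{fl}},G)$ is bijective and then compose with the $r=1$ case of Theorem \ref{abt}. The first input you identify --- triviality of $H^{1}$ of semisimple simply connected groups over $K$, via Bruhat--Tits, valid because $k$ is perfect of cohomological dimension $\leq 1$ --- is indeed half of the paper's argument (it cites \cite[Theorem 4.7(ii) and Remark 3.16(3)]{bt} for precisely this, applied to the simply connected cover $\widetilde{G}$ of $G^{\der}$), and your closing transport-of-structure remark is fine.

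The gap is in the claim that ``a parallel argument gives surjectivity'' of $\mathrm{ab}^{1}$ by twisting. Injectivity does follow by the twisting argument you sketch from the vanishing of $H^{1}$ of the relevant twists of the group $H$ in \eqref{f-res}. Surjectivity is a different kind of statement: by \eqref{ab1}, $H^{1}_{\mathrm{ab}}(K_{\mathrm{fl}},G)$ embeds into $H^{2}(K,F)$, and one must show that every class there which dies in $H^{2}(K,R)$ lies in the image of the connecting map $H^{1}(K,G)\to H^{2}(K,F)$ attached to the central extension \eqref{f-res}. That is a degree-$2$ nonabelian lifting (neutrality-of-gerbes) problem, and it does not follow from Serre's Conjecture II by twisting. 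The paper supplies it as a second, independent input: $K$ is a field of Douai type, by \cite[VII, Theorem 3.1, p.~99]{do} together with \cite[Definition 5.2]{ga2}, and then \cite[Theorem 5.8(i)]{ga2} packages exactly these two hypotheses into the bijectivity of $\mathrm{ab}^{1}$. You would need to add this degree-$2$ input to close the argument. Separately, your guess that the bijectivity of $\mathrm{ab}^{1}$ is ``the content of Lemma \ref{bor}'' is off: that lemma is an elementary computation of $H^{1}(k,M)\simeq M_{\g,\mathrm{tors}}$ for quasi-finite $k$, used only for Corollary \ref{abcor2}.
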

\begin{proof} Let $\widetilde{G}$ be the simply connected central cover of $G^{\der}$ (see \cite[p.~1161]{ga1} for the definition of $\widetilde{G}\,$). By \cite[Theorem 4.7(ii), p.~697, and Remark 3.16(3), p.~695]{bt}, the Galois cohomology set
$H^{1}\big(K,\widetilde{G}\e\big)$ is trivial. Further, by \cite[VII, Theorem 3.1, p.~99]{do}, $K$ is a field of Douai type in the sense of \cite[Definition 5.2]{ga2}. Thus, by \cite[Theorem 5.8(i)]{ga2}, the first abelianization map ${\rm{ab}}^{1}\colon H^{1}(K,G\le)\to H^{\le 1}_{\rm{ab}}(K_{\rm{fl}},G\le)$ is bijective. The corollary is now immediate from the theorem. 
\end{proof}

\begin{corollary} \label{abcor2} Assume that $k$ is quasi-finite. Then there exists a bijection of pointed sets $H^{1}(K,G\le)\simeq\pi_{1}(G)_{\mathcal G, {\rm{tors}}}$.
\end{corollary}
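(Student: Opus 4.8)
The plan is to combine Corollary \ref{abcor} with an elementary computation of $H^{1}$ for a finitely generated module over a procyclic Galois group. Since $k$ is quasi-finite, its absolute Galois group $\g$ is topologically isomorphic to $\widehat{\Z}$; in particular $k$ has cohomological dimension $\leq 1$, so Corollary \ref{abcor} applies and furnishes a bijection of pointed sets $H^{1}(K,G)\simeq H^{\le 1}(k,\pi_{1}(G)_{\lbe I})$. Now $\pi_{1}(G)_{\lbe I}$ is a finitely generated continuous $\g$-module, being a quotient of the finitely generated $\mathcal G$-module $\pi_{1}(G)$, and one has $(\pi_{1}(G)_{\lbe I})_{\g}=\pi_{1}(G)_{\mathcal G}$ because $\mathcal G$-coinvariants factor as the $\g$-coinvariants of the $I$-coinvariants (with $\g=\mathcal G/I$). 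Thus the corollary reduces to the following statement, which I would isolate as a lemma: \emph{if $k$ is quasi-finite and $N$ is a finitely generated continuous $\g$-module, then $H^{\le 1}(k,N)\simeq(N_{\g})_{\rm{tors}}$.}

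I would prove this directly with cocycles. Fix a topological generator $\sigma$ of $\g\simeq\widehat{\Z}$, choose $n\geq 1$ with $\sigma^{n}$ acting trivially on $N$ (possible since $N$ is finitely generated, so the $\g$-action factors through a finite quotient), and put $\nu=1+\sigma+\dots+\sigma^{n-1}$. A continuous cocycle $f\colon\g\to N$ is determined by its value $a=f(\sigma)\in N$ and must factor through a finite cyclic quotient of $\g$; this factorization is possible exactly when $\nu a\in N_{\rm{tors}}$, while the principal cocycles are those with $a\in(\sigma-1)N$. Hence $H^{\le 1}(k,N)=\{a\in N:\nu a\in N_{\rm{tors}}\}/(\sigma-1)N$. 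Since $\nu a-na=\sum_{i=1}^{n-1}(\sigma^{i}-1)a\in(\sigma-1)N$, the canonical surjection $N\twoheadrightarrow N_{\g}$ identifies this quotient with $(N_{\g})_{\rm{tors}}$: it is injective because $(\sigma-1)N\subseteq\{a:\nu a\in N_{\rm{tors}}\}$; its image lies in $(N_{\g})_{\rm{tors}}$ because $n\,\overline{a}=\overline{\nu a}$ is torsion in $N_{\g}$; and it is onto because $\overline{a}\in(N_{\g})_{\rm{tors}}$ forces $ma\in(\sigma-1)N$ for some $m\geq 1$, whence $m\,\nu a=\nu(ma)=0$ and so $\nu a\in N_{\rm{tors}}$.

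Finally, applying this isomorphism to $N=\pi_{1}(G)_{\lbe I}$ and invoking $(\pi_{1}(G)_{\lbe I})_{\g}=\pi_{1}(G)_{\mathcal G}$ gives $H^{\le 1}(k,\pi_{1}(G)_{\lbe I})\simeq\pi_{1}(G)_{\mathcal G, {\rm{tors}}}$, and composing with the bijection of Corollary \ref{abcor} yields the desired bijection of pointed sets $H^{1}(K,G)\simeq\pi_{1}(G)_{\mathcal G, {\rm{tors}}}$ (so that $H^{1}(K,G)$ inherits an abelian group structure). The only step requiring genuine care is the cocycle bookkeeping in the lemma, in particular the continuity argument for the profinite group $\widehat{\Z}$; should one prefer to bypass it, the same conclusion follows from the Hochschild--Serre spectral sequence of $1\to n\widehat{\Z}\to\widehat{\Z}\to\Z/n\Z\to 1$ together with the $2$-periodicity of the Tate cohomology of the finite cyclic group $\Z/n\Z$ and the vanishing of continuous homomorphisms from $\widehat{\Z}$ to a torsion-free group.
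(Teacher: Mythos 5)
Your proposal is correct and follows essentially the same route as the paper: reduce via Corollary \ref{abcor} to computing $H^{1}(k,\pi_{1}(G)_{I})$, identify $(\pi_{1}(G)_{I})_{\g}$ with $\pi_{1}(G)_{\mathcal G}$, and prove that over a quasi-finite field $H^{1}(k,N)\simeq (N_{\g})_{\rm tors}$ (the paper's Lemma \ref{bor}). The only difference is cosmetic: the paper quotes Serre's description $H^{1}(k,M)=M'/(\sigma-1)M$ from \emph{Local Fields} XIII, \S 1, whereas you rederive that cocycle description by hand for finitely generated modules, which suffices for the application.
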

\begin{proof} Since a quasi-finite field is perfect and of cohomological dimension $\leq 1$ by \cite[XIII, beginning of \S2, p.~190]{self} and \cite[III, \S2, Corollary 3, p.~69]{sh1}, the corollary is immediate from the previous corollary and the following lemma.
\end{proof}

\begin{lemma}\label{bor} Assume that $k$ is quasi-finite and let $M$ be a continuous $\g$-module. Then there exists a canonical isomorphism of abelian groups
$H^{\le 1}(k,M\lbe)\simeq M_{\g\le,\e\rm{tors}}$.
\end{lemma}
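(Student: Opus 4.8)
Since $k$ is quasi-finite, $\g=\widehat{\Z}$ topologically, generated by the Frobenius substitution $\sigma$. The key is that for a quasi-finite field the Galois cohomology of a continuous $\g$-module $M$ can be computed exactly as in the finite-field case: one has $H^{\le r}(k,M)=\varinjlim_{n}H^{\le r}(\Z/n,M^{\g_n})$, where $\g_n=\g/n\g$ is the (pro)cyclic quotient of order $n$ acting through the unique subgroup of index $n$, and $M^{\g_n}$ denotes the submodule fixed by the corresponding open subgroup. For a finite cyclic group $C=\langle\sigma\rangle$ acting on a module $N$, the standard periodic resolution gives $H^{\le 1}(C,N)=\krn(\mathrm{Norm})/(\sigma-1)N={}_{N}N/\frak A_{\be C}N=\widehat{H}^{-1}(C,N)$. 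So I first reduce to computing these Tate $\widehat{H}^{-1}$ groups and passing to the limit.

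\textbf{First step: the cyclic computation.} Fix $n$ and let $N=M^{\g_n}$, a module over the cyclic group $C_n$ of order $n$ with generator (the image of) $\sigma$. I would observe $H^{\le 1}(C_n,N)={}_{N}N/(\sigma-1)N$. Here ${}_{N}N=\krn(1+\sigma+\dots+\sigma^{n-1})$. The point is to identify this with a torsion subquotient of the coinvariants $N_{C_n}=N/(\sigma-1)N$: an element $\overline{x}\in N_{C_n}$ lies in the image of ${}_{N}N$ iff $Nx\in(\sigma-1)N$, and since $(1+\sigma+\dots+\sigma^{n-1})$ acts as multiplication by $n$ on $N_{C_n}$ (the $\sigma$-action being trivial there), this says exactly $n\cdot\overline{x}=0$ in $N_{C_n}$, i.e. $\overline{x}\in (N_{C_n})[n]\subseteq (N_{C_n})_{\rm{tors}}$. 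Thus $H^{\le 1}(C_n,N)=(N_{C_n})[n]$, the $n$-torsion of $N_{C_n}$ (this is the familiar fact $\widehat{H}^{-1}(C_n,N)=\krn(\mathrm{Norm})/\frak A N$, which for cyclic groups equals $(N_{C_n})[n]$).

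\textbf{Second step: passing to the limit.} Now I would let $n$ range over the directed set of positive integers (ordered by divisibility) and examine the transition maps in $\varinjlim_n H^{\le 1}(C_n,M^{\g_n})=\varinjlim_n (M^{\g_n}{}_{C_n})[n]$. The natural map $\g\to\g_n$ factors compatibly, and since $M=\bigcup_n M^{\g_n}$ (continuity) and $M_{\g}=\varinjlim_n (M^{\g_n})_{C_n}$ (a cofinal colimit, as $(\sigma-1)$ generates the augmentation of $\widehat{\Z}$), the colimit of the $n$-torsion subgroups is $\bigcup_n (M_{\g})[n]=(M_{\g})_{\rm{tors}}$. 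One must check the inflation–restriction transition maps in the colimit are precisely the inclusions $(M^{\g_n}{}_{C_n})[n]\hookrightarrow (M^{\g_{mn}}{}_{C_{mn}})[mn]$ induced by $M^{\g_n}\hookrightarrow M^{\g_{mn}}$ together with the projection on coinvariants; this is a routine but slightly fiddly diagram chase with the explicit cocycle description of $H^{\le 1}$ of a cyclic group. Assembling, $H^{\le 1}(k,M)=(M_{\g})_{\rm{tors}}=M_{\g\le,\e\rm{tors}}$, and all identifications are functorial in $M$, giving the canonicity.

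\textbf{Main obstacle.} The genuinely delicate point is the compatibility of the transition maps in the colimit (the second step): one must verify that under the identification $H^{\le 1}(C_n,N)\simeq (N_{C_n})[n]$ the inflation maps correspond to the obvious maps on $n$-torsion of coinvariants, so that the colimit computes $(M_\g)_{\rm{tors}}$ rather than something larger. Everything else — the cyclic cohomology computation and the continuity argument — is standard. (A cleaner alternative, which I would mention, is to use that $\g=\widehat{\Z}$ has strict cohomological dimension $\le 2$ with $H^{\le 2}(k,\Z)=0$ and $H^{\le 2}(k,M)$ torsion-free type considerations, plus the exact sequence coming from $0\to M_{\rm{tors}}\to M\to M/M_{\rm{tors}}\to 0$ and the known formula for finitely generated torsion-free $\g$-modules; but for a general continuous $M$ the direct limit argument above is the most transparent.)
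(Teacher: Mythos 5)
There is a genuine gap in your first step. For a finite cyclic group $C_{n}$ with generator $\sigma$ acting on a module $N$, write $\nu_{n}=1+\sigma+\cdots+\sigma^{\le n-1}$ for the norm. Periodicity gives $H^{\le 1}(C_{n},N)\simeq\widehat{H}^{-1}(C_{n},N)=\krn(\nu_{n})/(\sigma-1)N$, and since $\nu_{n}$ kills $(\sigma-1)N$, this group is exactly $\{\overline{x}\in N_{C_{n}}:\nu_{n}x=0\ \text{in}\ N\}$, the kernel of the map $N_{C_{n}}\to N^{C_{n}}$ induced by $\nu_{n}$. Your claimed identification with $(N_{C_{n}})[n]=\{\overline{x}:\nu_{n}x\in(\sigma-1)N\}$ is false: the ``if'' direction of your asserted equivalence fails because $\nu_{n}x$ cannot be changed by replacing $x$ with another representative of $\overline{x}$ (the norm annihilates $(\sigma-1)N$), so $\nu_{n}x\in(\sigma-1)N$ does not force $\nu_{n}x=0$. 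Concretely, $(N_{C_{n}})[n]\big/\widehat{H}^{-1}(C_{n},N)\simeq \nu_{n}N\cap(\sigma-1)N$, which need not vanish: for $N=(\Z/4)[C_{2}]$ one has $\widehat{H}^{-1}(C_{2},N)=0$ (induced module), while $N_{C_{2}}\simeq\Z/4$ and $(N_{C_{2}})[2]\simeq\Z/2$. So the ``familiar fact'' you invoke is not a fact, and the deferred ``routine diagram chase'' in your second step cannot succeed, since the finite-level identification it is supposed to be compatible with does not hold.

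The colimit statement you are after is nevertheless true, but proving it requires precisely the computation your write-up omits: if $n\overline{x}=0$ in $(M^{\g_{n}})_{C_{n}}$, i.e. $nx=(\sigma-1)z$ with $z\in M^{\g_{n}}$, then $x$ is genuinely killed by the norm at a deeper level, e.g. $\nu_{n^{2}}\le x=\nu_{n}(nx)=\nu_{n}(\sigma-1)z=(\sigma^{\le n}-1)z=0$, so $\overline{x}$ does lie in the image of $\widehat{H}^{-1}$ after passing from level $n$ to level $n^{2}$. This is exactly the displayed calculation in the paper's proof, which takes a more direct route: it quotes Serre (\emph{Local Fields}, XIII, \S1, Proposition 1) for the description $H^{\le 1}(k,M)=M^{\e\prime}/(\sigma-1)M$, where $M^{\e\prime}$ is the set of $x$ killed by some partial norm $\nu_{n}$, and then verifies by the above norm manipulation that $M^{\e\prime}/(\sigma-1)M$ is the full torsion subgroup of $M_{\g}$. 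Your plan can be repaired by inserting this argument into the transition maps of your colimit, but as written the proof does not go through.
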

\begin{proof} Let $\sigma$ be a free generator of $\g$.
By \cite[XIII, \S 1, Proposition 1, p.~189]{self}, there exists a canonical 
isomorphism of abelian groups $H^{\le 1}(k,M\lbe)=M^{\e\prime}/(\sigma-1)M$, where $M^{\e\prime}$ is the subgroup of $M$ consisting of those $x\in M$ for which there exists an integer $n\geq 1$ such that $(1+\sigma+\cdots \sigma^{\le n-1})\le x=0$. Thus it remains only to check that $M^{\e\prime}/(\sigma-1)M$ is the full torsion subgroup of $M_{\g}=M/(\sigma-1)M$. Let $x\in M$ be such that $mx=(\sigma-1)y$ for some positive integer $m$ and some $y\in M$ and choose a positive integer $r$ such that $\sigma^{\le r}x=x$ and $\sigma^{\le r}y=y$. Then
\[
\begin{array}{rcl}
(1+\sigma+\cdots \sigma^{\le mr-1})\le x&=&(1+\sigma+\cdots \sigma^{\le r-1})(1+\sigma^{\le r}+\cdots \sigma^{\le (m-1)r})x\\
&=&(1+\sigma+\cdots \sigma^{\le r-1})mx=(1+\sigma+\cdots \sigma^{\le r-1})(\sigma-1)y\\
&=&(\sigma^{\le r}-1)y=0.
\end{array}
\]
\end{proof}

\begin{remarks}\indent
\begin{enumerate}
\item[(a)] When $K$ is a finite extension of $\Q_{\,p}$, Corollary \ref{abcor2} is due to  Borovoi. See \cite[Corollary 5.5(i)]{bor}.
\item[(b)] Assume that $k$ has finite cohomological dimension $n\geq 1$ and let $\mu$ be the  kernel of the canonical morphism $\widetilde{G}\to G$. Then $\mu$ is a finite and commutative $K$-group scheme. By \cite[~\S II.4.3, Proposition 12, p.~85]{segc} and \cite[Theorem 4, p.~593]{sh2}, $ H^{r}\be(K_{\rm{fl}},\mu)=0$ for every $r\geq n+2$. Consequently, the exact sequence in \cite[p.~1174, line 8]{ga1} yields an isomorphism of abelian groups $ H^{\le n+1}_{\rm{ab}}(K_{\rm{fl}},G\e)\simeq  H^{\le n+1}(K,G^{\tor})$. Thus, by the proof of Proposition \ref{fprop}, there exists an isomorphism 
$ H^{\le n+1}_{\rm{ab}}(K_{\rm{fl}},G\e)\simeq  H^{\le n+1}(k,X_{*}(G^{\tor})_{\lbe I})$. On the other hand, it follows from \cite[Proposition 6.4, p.~104]{ct} that there exists an isomorphism of $\g$-modules $X^{*}(G^{\tor})_{\lbe I}\simeq \pi_{1}(G\le)_{\lbe I}/M^{\le\prime}$, where $M^{\le\prime}$ is a finite submodule of $\pi_{1}(G\le)_{\lbe I}$. Thus there exists an isomorphism of abelian groups 
\[
H^{\le n+1}_{\rm{ab}}(K_{\rm{fl}},G\e)\simeq  H^{\le n+1}(k,\pi_{1}(G)_{I})
\]
which generalizes the case $r=2$ of Theorem \ref{abt}.
\end{enumerate}
\end{remarks}

\end{document}